\title{A positive combinatorial formula for
the double Edelman---Greene coefficients}
\author{Jack Chen-An Chou}
\address{Jack Chen-An Chou, Department of Mathematics, University of Florida, Gainesville, FL 32611.}
\email{c.chou@ufl.edu}
\author{Tianyi Yu}
\address{Tianyi Yu, Laboratoire d'Alg\`ebre, de Combinatoire et d'Informatique Math\'ematique, Universit\'e du Qu\'ebec \`a Montr\'eal, Montreal QC, Canada}
\email{yu.tianyi@uqam.ca}
\begin{document}

\begin{abstract}
Lam, Lee, and Shimozono introduced the double Stanley symmetric functions in their study of the equivariant geometry of the affine Grassmannian.
They proved that the associated double Edelman--Greene coefficients, 
the double Schur expansion coefficients of these functions,
are positive, a result later refined by Anderson.
They further asked for a combinatorial proof of this positivity.
In this paper, we provide the first such proof, together with a combinatorial formula that manifests the finer positivity established by Anderson.
Our formula is built from two combinatorial models: bumpless pipedreams and increasing chains in the Bruhat order.
The proof relies on three key ingredients: a correspondence between these two models, a natural subdivision of bumpless pipedreams, and a symmetry property of increasing chains.

\end{abstract}

\maketitle

\section{Introduction}
\label{S: Intro}

We give a combinatorial formula for the double Edelman--Greene coefficients introduced by Lam, Lee and Shimozono~\cite{LLS}, exhibiting a positivity that has been established geometrically~\cites{LLS, And} but not combinatorially.
These coefficients govern the double Schur
expansion of double Stanley functions. 

\subsection*{From Schubert polynomials
to double Stanley functions}
A \definition{permutation}
is a bijection from $\Z$ to $\Z$
with finitely many non-fixed points.
Let $S_\Z$ be the set of permutations 
and $S_n$ be the subset of $S_\Z$
whose non-fixed points are in $[n] := \{1, \dots. n\}$.
For $w \in S_n$, 
Lascoux and Sch\"utzenberger~\cite{LS:Schub} 
introduced 
the \definition{Schubert polynomial} $\fS_w(\x)
\in \Q[x_1, \dots, x_{n-1}]$. 
These polynomials
represent the cohomology classes of Schubert varieties in the flag variety. 
Their equivariant analogue, the \definition{double Schubert polynomial} 
$\fS_w(\x;\y)$, represents Schubert classes in the torus-equivariant cohomology ring.
In their foundational work, Lam, Lee, and Shimozono~\cite{LLS} introduced the
\definition{back stable double Schubert function} $\bfS_w(\x; \y)$ for any $w \in S_\Z$
as the ``back stable limit'' of 
certain specializations of double Schubert polynomials.
The function $\bfS_w(\x; \y)$ lives in the ring $\Lambda(\x || \y) \otimes_{\Q[\y]} \Q[\x; \y]$,
where $\Lambda(\x || \y)$ is the 
\definition{ring of double symmetric functions} and $\Q[\x; \y]$ is the polynomial
ring in $x_i, y_i$ with $i \in \Z$.
The ring $\Lambda(\x || \y)$ has a basis
consisting of the 
\definition{double Schur functions} 
$s_\lambda(\x || \y)$ indexed by partitions.  
These functions were studied by Molev~\cite{Mol}
and recover the classical Schur functions
by specializing $\y\mapsto 0$.
Each partition $\lambda$ 
is associated with a $0$-Grassmannian 
permutation $w_\lambda$ 
(see Definition~\ref{D: parititons to 0-Grass}).
Following \cite{LLS}*{Prop.~4.11},
we may define $s_\lambda(\x || \y)$ 
as $\bfS_{w_\lambda}(\x;\y)$.

For $w \in S_\Z$, 
one may expand $\bfS_w(\x; \y)$ into the double Schur basis:
\[
\bfS_w(\x; \y) = \sum_{\lambda} a^w_\lambda(\x; \y) \, s_\lambda(\x || \y),
\]
with coefficients $a^w_\lambda(\x; \y) \in \Q(\x;\y)$.
The \definition{double Stanley function} $F_w(\x ||\y)$ may be defined as
\[
F_w(\x || \y) := \sum_{\lambda} j^w_\lambda(\y) \, s_\lambda(\x || \y),
\qquad
j^w_\lambda(\y) := a^w_\lambda(\x;  \y)\big|_{\x\mapsto \y}
= a^w_\lambda(\y; \y).
\]
They recover the classical \definition{Stanley symmetric function}~\cite{StanSym} under
the specialization $\y \mapsto 0$.
The polynomials $j^w_\lambda(\y)$
are called the \definition{double Edelman--Greene coefficients}.

\subsection*{Positivity of double Edelman--Greene coefficients}
One main result established by Lam, Lee and Shimozono~\cite{LLS}*{Thm.~4.22}
is that the double Edelman--Greene coefficients
are positive, meaning that
$j^w_\lambda(\y) \in \Z_{\ge 0}[\, y_i - y_j : i, j \in \Z, i\prec j \,]$,
where $\prec$ denotes the total order on $\Z$:
\[
1 \prec 2 \prec 3 \prec \cdots \prec -2 \prec -1 \prec 0.
\]
We call this \definition{Lam--Lee--Shimozono (LLS) positive}.
Their proof is geometric:
they identify $F_w(\x \,\|\, \y)$ with the equivariant Schubert class
of the affine flag variety and use equivariant localization
to express the structure constants as positive combinations of $y_i-y_j$. 
This positivity
extends the classical result~\cites{EG,LS85}:
the Edelman--Greene coefficients, which can be recovered by $\y \mapsto 0$ in $j^w_\lambda(\y)$, 
are in $\Z_{\geq 0}$.
More recently, by analyzing the structure of
equivariant Schubert classes in the affine Grassmannian,
Anderson~\cite{And} refined
the LLS-positivity by imposing 
additional constraints on the power of 
$y_i - y_j$ in each monomial.
We rephrase this positivity following the terminology of~\cite{GHY}.
The forms $(y_i - y_j)$
with $i\prec j$ are classified 
into three types:
\[
\text{Type 1: } 0<i<j, \quad
\text{Type 2: } i<j\le 0, \quad
\text{Type 3: } j\le 0 < i.
\]

\begin{thm}[{\cite{And}}]
\label{T: Anderson}
The double Edelman--Greene coefficient $j^w_\lambda(\y)$
can be written as a polynomial of the linear forms $(y_i - y_j)$ with $i \prec j$ with coefficient in $\Z_{\geq 0}$.
In each monomial, 
each type~1 or type~2 factor has 
degree at most $1$,
and each type~3 factor has 
degree at most $2$.
\end{thm}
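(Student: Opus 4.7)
The plan is to prove Theorem~\ref{T: Anderson} by constructing an explicit positive combinatorial formula for $j^w_\lambda(\y)$ in which each summand manifestly obeys the degree restrictions on type~1, 2, and 3 factors. I would begin from a known bumpless pipedream (BPD) formula for the back-stable double Schubert function $\bfS_w(\x;\y)$, where each BPD of $w$ contributes a product of linear forms $(x_a - y_b)$ over its blank tiles. To extract the Schur coefficient $a^w_\lambda(\x;\y)$, I would invoke the natural subdivision of BPDs mentioned in the introduction: every BPD of $w$ should split canonically into a Grassmannian piece, whose weighted sum over allowed completions assembles into $s_\lambda(\x || \y)$, and a complementary piece that records the coefficient. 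Specializing $\x \mapsto \y$ at the end converts each surviving factor $(x_a - y_b)$ into a linear form $(y_i - y_j)$ with $i \prec j$.

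The next step is to translate the complementary piece into an increasing chain in the Bruhat order via the BPD-to-chain correspondence announced in the abstract, thereby reinterpreting the formula as a weighted count of pairs consisting of a chain from $w_\lambda$ up to $w$ together with compatible local data. Each covering relation in such a chain contributes one linear form, whose type is read off from the labels being exchanged. LLS-positivity then follows immediately, since the resulting sum is over genuine combinatorial objects with weight one.

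The main obstacle is the refined degree bound of $2$ for type~3 factors. The bounds of $1$ for type~1 and type~2 factors should drop out of strict monotonicity within the positive block and within the non-positive block of $\Z$: a chain can cross a given same-block pair at most once. Type~3 pairs $(i,j)$ with $j \leq 0 < i$ are subtler, because a chain may in principle transit the boundary between the two blocks repeatedly. To control this I would invoke the symmetry property of increasing chains flagged in the abstract, which I expect canonically reduces any chain using a fixed type~3 pair three or more times to a chain using it at most twice, thereby restricting the natural support of the formula to chains satisfying Anderson's bound. Designing this reduction so that it intertwines correctly with the BPD-side decoration, and checking that it preserves positivity rather than requiring cancellation, will be the most delicate point of the argument.
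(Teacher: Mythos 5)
Your high-level ingredients are the right ones — BPDs, a subdivision, the chain correspondence, and the chain symmetry — but the proposal has two genuine gaps, one structural and one in the mechanism you propose for the type~3 degree bound.

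The structural gap: a single ``Grassmannian piece + complementary piece'' split does not extract $a^w_\lambda$ for a general $w$. Cutting a BPD of $w$ along the horizon produces a sum $\bfS_w = \sum_\sigma \bfS_\sigma \cdot \lfS_{\sigma,w}$ over $\sigma \in S_-$, and the upper pieces $\bfS_\sigma$ are Schur functions only when $\sigma$ is $0$-Grassmannian, which is guaranteed only when $\Des(w)\subset\Z_{\ge 0}$. To obtain $a^w_\lambda$ for arbitrary $w$, the paper needs a second application of the same machinery to $\bfS_\sigma$ via the involutions $\omega_1$ and $\neg$ (Corollary~\ref{C: Omega}, Lemma~\ref{L: Negation and Des}), arriving at $a^w_\lambda = \sum_\sigma \omega_1(\lfS_{w_{\lambda'},\neg\sigma}) \cdot \lfS_{\sigma,w}$. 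You also omit the second, \emph{diagonal} cut, which is essential to isolate the portion of the lower half-plane whose blank tiles all lie strictly above the diagonal (giving type~1 factors) from the trapezoidal portion handled by the chain correspondence.

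The mechanism gap is more serious. You propose to prove the type~3 bound of $2$ by arguing that a chain can ``transit the boundary'' at most twice, and that the chain symmetry ``canonically reduces'' chains violating this. That is not how the symmetry is used and would not obviously preserve positivity. In the paper, the symmetry (Proposition~\ref{P: double symmetry}, built on Samuel's Pieri rule) is used to reverse the direction of the $\alpha$-sequence so that, after $\x\mapsto\y$, the contributions from the trapezoidal piece are forced to be products of type~3 factors and spurious type~1 terms cancel at the level of the whole generating function. The actual degree bound of $2$ for type~3 is purely structural: each of the two factors in $\omega_1(\lfS_{w_{\lambda'},\neg\sigma})\cdot\lfS_{\sigma,w}$ contributes, per summand, a product of \emph{distinct} type~3 factors (degree $\le 1$ each), and one of them — after $\omega_1$, which fixes type~3 and swaps types~1 and~2 — also contributes distinct type~2 factors while the other contributes distinct type~1 factors. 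Multiplying gives at most degree $1$ in type~1, degree $1$ in type~2, and degree $2$ in type~3, with no further chain-level reduction needed. Your monotonicity intuition for types~1 and~2 is closer to correct, but it enters through the geometry of the triangular region (cells $(i,j)$ with $0<i<j$, each used at most once), not through any block-crossing argument on chains.
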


As far as the authors are aware, it remains an open problem to give a combinatorial proof of Theorem~\ref{T: Anderson} and to produce a combinatorial formula for $j^w_\lambda(\y)$ that directly exhibits Anderson's refined positivity, or even the original LLS-positivity.
The case when $w$ is vexillary (i.e.\ $2143$-avoiding) is resolved
by Gregory, Hamaker and Yu~\cite{GHY}
using a tableaux formula.
The goal of this paper is 
to resolve the general case of this problem:
\begin{prob}
\label{Pb: main}
Give a combinatorial formula
for $j^w_\lambda(\y)$
that satisfies the condition
in Theorem~\ref{T: Anderson}
\end{prob}

Our approach is completely combinatorial and our formula 
can be obtained by combining~\eqref{EQ: intro a},~\eqref{EQ: Intro Diag cut} and~\eqref{EQ: intro final TBPD}.
It involves two combinatorial objects:
bumpless pipedreams and increasing chains. 

\subsection*{Bumpless pipedreams}
Lam, Lee, and Shimozono~\cite{LLS} introduced \definition{bumpless pipedreams (BPDs)} as a combinatorial model for double Schubert polynomials and their back stable analogues.  
There has been a recent 
surge of research on 
these new combinatorial objects~\cites{GH, Huang,Huang2,HSY,KW,W,Yu}.

Consider the integer lattice $\Z\times\Z$, viewed as a grid whose row (resp. col) indices increase from top (resp. left) to bottom (resp. right).  
An element $(r,c)\in\Z\times\Z$ corresponds to the cell in row~$r$ and column~$c$ and   
a subset $\A\subseteq\Z\times\Z$ is therefore identified with a set of cells.
A BPD $D$ on $\A$ is a map
\[
D:\A\longrightarrow
\{\btile,\htile,\vtile,\rtile,\jtile,\ptile\},
\]
satisfying certain conditions. 
The tiles are decorated by pipes, 
so we may trace the pipes from top and right
to bottom and left.
Each BPD $D$ is assigned a \definition{weight}
$$
\wt(D):=\prod_{(i,j): D(i,j) = \btile} (x_i-y_j). 
$$
\begin{thm}[\cites{LLS, W}]
\label{T: LLS BPD}
For $u\in S_n$ and $w \in S_\Z$,
\[
\fS_u(\x;\y)=\sum_{D\in\BPD_n(u)}\wt(D),\textrm{ and} 
\quad
\bfS_w(\x;\y)=\sum_{D\in\bBPD(w)}\wt(D),
\]
where $\BPD_n(w)$ and $\bBPD(w)$ denote certain sets of BPDs on $[n]\times[n]$ and $\Z\times\Z$, respectively
(see Definition~\ref{D: BPD for Schubert}).
\end{thm}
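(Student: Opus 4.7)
I would prove the finite identity by induction on $\ell(w_0) - \ell(u)$ using the divided-difference characterization of double Schubert polynomials, then deduce the back-stable identity via a stabilization argument. Set $G_u(\x;\y) := \sum_{D\in\BPD_n(u)} \wt(D)$. It suffices to verify (i) $G_{w_0} = \fS_{w_0}$, and (ii) $\partial_i^x G_{us_i} = G_u$ whenever $u(i) < u(i+1)$, since these properties, together with the classical recursion $\partial_i^x \fS_{us_i} = \fS_u$ and the same base case, force $G_u = \fS_u$ for every $u \in S_n$.

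Base case (i) is concrete: $\BPD_n(w_0)$ contains a unique ``staircase'' diagram whose blank tiles $\btile$ occupy exactly $\{(i,j) : i+j\le n\}$, yielding $\wt(D) = \prod_{i+j\le n}(x_i - y_j)$, the standard formula for $\fS_{w_0}(\x;\y)$. For the inductive step (ii), I would employ the local ``droop'' operation on BPDs developed in \cites{LLS, W}: a move that rearranges the tiles in a small region about columns $i, i+1$, producing a pairing between elements of $\BPD_n(us_i)$ and $\BPD_n(u)$ under which the contributions to $\partial_i^x G_{us_i}$ and $G_u$ match orbit by orbit. The crucial point is that the weight change across a droop is governed by a single factor of the form $(x_{\bullet} - y_{\bullet})$, producing a telescoping that realizes $\partial_i^x$ combinatorially.

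For the back-stable case, every $D \in \bBPD(w)$ differs from a fixed ``identity'' reference configuration on $\Z \times \Z$ in only finitely many tiles, so $D$ corresponds canonically to a BPD of a sufficiently large finite truncation $w[N]$. Combined with the defining stabilization $\bfS_w(\x;\y) = \lim_{N \to \infty} \fS_{w[N]}(\x;\y)$, the finite identity passes termwise to the back-stable one.

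The main obstacle is the droop bijection of step (ii). The move must be defined by a case analysis on the tiles adjacent to the droop region, and one has to simultaneously verify that it preserves the permutation encoded by the pipes, that it produces a well-defined pairing between BPDs of $u$ and $us_i$, and that it changes the weight by exactly the factor prescribed by $\partial_i^x$. A secondary but subtle issue arises in the back-stable limit: the reference configuration and the truncations $w[N]$ must be chosen coherently so that pipes extending ``to infinity'' are tracked consistently as $N$ grows, guaranteeing that each finite weight stabilizes rather than acquiring spurious factors.
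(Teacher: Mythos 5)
The paper does not prove this theorem; it cites it from \cites{LLS, W} and then immediately states ``We take this result as the definition of $\fS_w(\x;\y)$ and $\bfS_w(\x;\y)$.'' So for the purposes of this paper there is nothing to prove --- the bumpless-pipedream generating functions \emph{are} the double Schubert and back-stable double Schubert functions by fiat, and the theorem is recorded only to indicate agreement with the classical definitions. Your proposal is therefore not comparable to a proof in the paper itself.

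As a sketch of the argument that actually appears in the cited literature, what you wrote is broadly reasonable. The base case is correct: $\BPD_n(w_{0,n})$ is the singleton $\{\Rothe_{w_{0,n}}\}$ with blanks exactly on $\{(i,j): i+j\le n\}$, giving $\prod_{i+j\le n}(x_i - y_j)$. The inductive step via $\partial_i^x$ is the content of Lam--Lee--Shimozono's droop-move analysis, and you correctly identify it as the hard part; I would only caution that it is not a ``pairing'' in the sense of a bijection between $\BPD_n(us_i)$ and $\BPD_n(u)$ (the two sets generally have different sizes), but rather a partition of $\BPD_n(us_i)$ into orbits on which $\partial_i^x$ acts in a controlled way, with the droop move supplying the combinatorics. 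The back-stable passage by tracking a reference configuration and taking a coherent limit over truncations $w[N]$ is likewise the standard stabilization argument. If you were to flesh this out, the droop case analysis is where essentially all the work lies, and you should consult Weigandt's paper \cite{W}, which gives a cleaner treatment of the finite case than the original.
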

We take this result as the definition of $\fS_w(\x;\y)$ and $\bfS_w(\x;\y)$.

\subsection*{Slicing BPDs}
We reduce Problem~\ref{Pb: main}
by finding formulas for 
$a^w_\lambda(\x; \y)$ that are easier to study. 
To achieve this, we study the combinatorics of BPDs.
In particular, we cut BPDs into smaller
pieces and study the generating function 
of each piece. This is an
overview of~\S\ref{S: slice}.

Our first cut is the \definition{horizon cut} on $D\in\bBPD(w)$, 
decomposing $D$ into a BPD of the 
\definition{upper half-plane}
$H^\uparrow := \Z_{\le 0}\times\Z$
and a BPD of the 
\definition{lower half-plane}
$H^\downarrow := \Z_{>0}\times\Z$.
This construction is a natural extension of the ``halfplane crossless pipedreams''
and ``rectangular bumpless pipedreams''
in Section 5.3 and 5.4 of~\cite{LLS}.

Let $S_-$ denote the set of permutations $w\in S_\Z$
satisfying $w(1)<w(2)<\cdots$.
For $\sigma\in S_-$, we define $\LBPD(\sigma,w)$,
a family of BPDs of $H^\downarrow$,
and denote its generating function by $\lfS_{\sigma,w}(\x;\y)$ (see Definition~\ref{D: Lower Schub}).
Finally, 
we show that every coefficient $a^w_\lambda(\x;\y)$
can be expressed in terms of these functions:
\begin{coralowerlower}
For $w\in S_\Z$,
    \begin{equation}
    \label{EQ: intro a}
    a^w_\lambda(\x;\y)
    =\sum_{\sigma\in S_-}
    \omega_1\!\big(\lfS_{w_{\lambda'},\neg\sigma}(\x;\y)\big)\:
    \lfS_{\sigma,w}(\x;\y),
    \textrm{ so }
    j^w_\lambda(\y)
    =\sum_{\sigma\in S_-}
    \omega_1\!\big(\lfS_{w_{\lambda'},\neg\sigma}(\y;\y)\big)\:
    \lfS_{\sigma,w}(\y;\y).
    \end{equation}
\end{coralowerlower}

Here, $\neg$ is the involution on $S_\Z$ defined in Definition~\ref{D: neg}
and $\omega_1$ is the operator sending each $x_i,y_i$ to $x_{1-i},y_{1-i}$, respectively.
By Lemma~\ref{L: Omega},
$\omega_1$ interchanges type~1 and type~2 forms while preserving type~3.
Thus, by~\eqref{EQ: intro a}, 
Problem~\ref{Pb: main} reduces to:
\begin{prob}
\label{Pb: lower}
For $w \in S_\Z$ and $\sigma \in S_-$, 
give a combinatorial formula
for $\lfS_{\sigma,w}(\y;\y)$
that is a sum where each
summand is a distinct product
of type~1 or type~3 factors.
\end{prob}

\medskip
Then we move to study $\lfS_{\sigma,w}(\x;\y)$.
The second cut is the \definition{diagonal cut}, breaking $D \in \LBPD(\sigma, w)$ 
into a BPD of the ``triangular'' 
region $\A^{\tri}:= \{(i, j) : 0 < i < j\}$ 
and a BPD of the ``trapezoid'' region
$\A^{\tra}:= \{(i, j) : 0 < i \textrm { and } j \leq i\}$.
This cut is analogous to
the tableaux division in Section~3.2 of~\cite{GHY}.

\begin{propDiag}
The diagonal cut yields a bijection, leading to
the polynomial identity:
\begin{equation}
\label{EQ: Intro Diag cut}
\lfS_{\sigma, w}(\x;\y)
= \sum_{\gamma \in \Gamma(\sigma, w)}
\left(\sum_{D\in\BPD^{\tra}(\gamma)}\wt(D)\right)
\left(\sum_{D\in\BPD^{\tri}(\gamma)}\wt(D)\right).
\end{equation}    
\end{propDiag}

Here, each $\gamma$ we summed over
is a tuple of maps called ``boundary conditions''
(see Definition~\ref{D: Boudnary condition}) 
determined by $\sigma$ and $w$.
Intuitively, $\gamma$ dictates how each pipe
travels from the right and top boundary
of $\A^{\tra}$ to the bottom boundary.
Then $\BPD^{\tra}(\gamma)$ 
is a set of BPDs of $\A^{\tra}$
satisfying $\gamma$.
We also define $\BPD^{\tri}(\gamma)$ as
a set of BPDs of $\A^{\tri}$
satisfying some boundary condition
determined by $\gamma, \sigma$ and $w$ (see Definition~\ref{D: Gamma}).
Recall that $\A^{\tri}$
consists of the cells $(i,j)$ with $0 < i < j$.
Therefore, 
after $\x \mapsto \y$,
the generating function 
$\sum_{D\in\BPD^{\tri}(\gamma)}\wt(D)$
in~\eqref{EQ: Intro Diag cut} is already a combinatorial formula
in which each summand is a product of distinct type~1 factors.
Problem~\ref{Pb: lower} reduces to:

\begin{prob}
\label{Pb: TBPD}
Take $\sigma \in S_-$ and $w \in S_\Z$.
For $\gamma \in \Gamma(\sigma, w)$, 
give a combinatorial formula
for $\fS^{\tra}_\gamma(\y; \y)$ where
$$\fS^{\tra}_\gamma(\x; \y) := \sum_{D\in\BPD^{\tra}(\gamma)}\wt(D).$$
The formula should be
a sum where each summand is a distinct product of
type~3 factors.
\end{prob}

We first find integers $a \leq 0 < n$ 
and define $\TBPD_{a,n}(\gamma)$.
These are BPDs
of a finite region 
such that 
$\fS^{\tra}_\gamma(\x; \y) = \sum_{D\in\BPD_{a,n}^{\tra}(\gamma)}\wt(D)$.
To understand these BPDs, we use increasing chains. 

\subsection*{Increasing chains.}
Sottile~\cite{So96} introduced \definition{increasing chains} to model the Pieri rule for Schubert polynomials. 
These are certain chains
in the Bruhat order (see Definition~\ref{D: Inc chain}).
For permutations $u$ and $w$, we write $u\xrightarrow{k}w$ if there exists an increasing $k$-chain from $u$ to $w$.  
Given a sequence $\alpha=(\alpha_1,\dots,\alpha_m)$ of integers, let $C(u,w,\alpha)$ denote the set of sequences $(u_1,\dots,u_{m+1})$ such that
$u_1=u$, $u_{m+1}=w$, and $u_i\xrightarrow{\alpha_i}u_{i+1}$ for each $i\in[m]$.  
Each such sequence is assigned a \definition{weight} $\wt^n_\alpha(u_1, \dots, u_{m+1})(x_1, \dots, x_m;\y)$
depending on $n\in\Z$ and the sequence $\alpha$ (see~\eqref{EQ: Define wt}).
We then define the generating function
\begin{equation}
\label{EQ: define fC}
\fC^n_{u, w, \alpha}(x_1, \dots, x_{m}; \y)
:= 
\sum_{(u_1, \dots, u_{m+1}) \in C(u, w, \alpha)}
\wt^n_\alpha(u_1, \dots, u_{m+1})(x_1, \dots, x_m; \y).    
\end{equation}
Using a Pieri identity of Samuel~\cite{Sam}, 
we show that this generating function is symmetric
under permuting the $\alpha$ and $\x$-variables
in Proposition~\ref{P: double symmetry}.
In particular, 
\begin{equation}
\label{EQ: double rev}
\fC^n_{u,w,\alpha}(x_1,\dots,x_m;\y)
=
\fC^n_{u,w,\rev(\alpha)}(x_m,\dots,x_1;\y),
\end{equation}
where $\rev(\alpha)$ denotes sequence reversal.

\medskip
For $u \in S_n$,
Yu~\cite{Yu} established a bijection
between $\BPD_n(w)$ and $C_{u, w_{0,n}, (n-1, \dots, 1)}$
where $w_{0,n} \in S_n$ is the permutation with one-line notation 
$[n, \dots, 1]$.
Analogous to this bijection, 
we establish a bijection:
\[
\TBPD(\gamma) \longleftrightarrow C(U, W, \rev(\alpha)),
\]
where $U$ and $W$ are permutations 
and $\alpha$ is a sequence of $n-1$ integers,
all constructed from $\gamma$ (see Definition~\ref{D: U W alpha}).
If $D \mapsto (u_n, \dots, u_1)$ under this bijection,
we show that $$\wt(D) \prod_{1 \leq i < j \leq n} (x_i - y_j) = \wt_{\rev(\alpha)}^n(x_{n-1}, \dots, x_1; \y).$$
Thus, we obtain the identity
\[
\fS^{\tra}_\gamma(\x; \y) \cdot 
\prod_{1 \leq i < j \leq n} (x_i - y_j)
= \fC_{U, W, \rev(\alpha)}^n(x_{n-1}, \dots, x_1; \y)
= \fC_{U, W, \alpha}^n(x_{1}, \dots, x_{n-1}; \y),
\]
where the second equality follows from~\eqref{EQ: double rev}.
Consequently,
\begin{propTBPD}
\begin{equation}
\label{EQ: intro final TBPD}
\fS^{\tra}_\gamma(\y; \y) = 
\sum_{(u_1, \dots, u_n) \in 
C(U, W, \alpha)}
\frac{
\wt_{\alpha}^n(u_1, \dots, u_n)
(y_1, \dots, y_{n-1}; \y)
}{
\prod_{1 \leq i < j \leq n} (y_i - y_j)
}
\end{equation}
Moreover, for each $(u_1, \dots, u_n) \in C(U, W, \alpha)$,
its contribution to the right-hand side 
is either $0$ or a distinct product
of type~3 factors. 
This resolves Problem~\ref{Pb: TBPD}.
\end{propTBPD}

\section*{Acknowledgements}
We thank Nantel Bergeron, Adam Gregory, Zachary Hamaker, Brendon Rhoades, Linus Setiabrata, Mark Shimozono, 
Hunter Spink, and Vasu Tewari for many helpful and inspiring conversations. 
We thank Zachary Hamaker for pointing out important 
references for Theorem~\ref{T: Sam}.
We thank Zachary Hamaker, Thomas Lam, Seung Jin Lee, 
Brendon Rhoades and Linus Setiabrata for a careful reading of an earlier draft and providing useful feedback.
\section{Background}
\label{S: Background}

\subsection*{Permutations}

For integers $a \leq n$, 
let $[a,n] := \{a, a+1, \dots, n\}$.
We use $S_{[a,n]}$ to denote the set of 
permutations whose non-fixed points
are contained in $[a,n]$.
We represent such $w \in S_{[a,n]}$
via its one-line notation $[w(a), w(a+1), \dots, w(n)]$.
We will often specify we are giving the one-line 
notation for the permutation as an element of which $S_{[a,n]}$.
For instance, 
consider $w \in S_{[-2,1]}$
with $w(-2) = 1$, 
$w(-1) = -1$, $w(0) = -2$, and $w(1) = 0$.
The one-line notation of $w \in S_{[-2,1]}$ is $[1,-1,-2,0]$,
while the one-line notation of $w \in S_{[-2,2]}$
is $[1, -1, -2,0,2]$.

Notice that each $S_{[a,n]}$
behaves essentially the same as $S_{n-a+1}$.
More precisely, 
let $\std_{[a,n]}$ denote the 
\definition{standardization map}.
For $w \in S_{[a,n]}$, 
$\std_{[a,n]}(w)$ is the permutation
in $S_{n - a + 1}$ defined by
\begin{equation}
\label{EQ: define std}
\std_{[a,n]}(w)(i) = w(i + a - 1) - a + 1
\quad \text{for } i \in [n-a+1].
\end{equation}
Clearly, $\std_{[a,n]}$
is a bijection from $S_{[a,n]}$ to $S_{n-a+1}$.

\medskip

The \definition{descent set} of $w \in S_\Z$ is
\[
\Des(w) := \{\, i \in \Z : w(i) > w(i+1)\,\}.
\]
We say that $w$ is \definition{$k$-Grassmannian}
if $\Des(w)\subseteq\{k\}$.
Notice that for $w$ with $\Des(w)\subset \Z_{\leq 0}$,
we can completely determine $w$ 
by its values on $\Z_{\leq 0}$:
once $w(r)$ is specified for each $r\leq 0$,
the values $w(i)$ for $i > 0$
must be the $i\textsuperscript{th}$ 
smallest integer not in $w(\Z_{\leq 0})$.
Dually, if $\Des(w)\subset \Z_{\geq 0}$,
then $w$ is completely determined by its values on nonnegative indices.
Combining these two ideas, we obtain
a bijection between $0$-Grassmannian permutations 
and certain sets:
the map $w \mapsto w(\Zn)$
is a bijection from $0$-Grassmannian permutations
to sets $I \subset \Z$ such that $I$ (resp. $\Z \setminus I$)
is bounded from above (resp. below).

Moreover, 
$0$-Grassmannian permutations 
are in bijection with \definition{partitions},
that is, weakly decreasing sequences
$\lambda=(\lambda_1,\lambda_2,\dots)$ of 
nonnegative integers with $\lambda_i = 0$ for all sufficiently large $i$.

\begin{defn}
\label{D: parititons to 0-Grass}
Given a partition $\lambda$,
let \definition{$w_\lambda$} be the unique
$0$-Grassmannian permutation such that
$w_\lambda(i) = i+\lambda_{1-i}$ for all $i \in \Z_{\leq 0}$.
\end{defn}
It is a classical fact that $\lambda \mapsto w_\lambda$
is a bijection.

\begin{rem}
\label{R: Sets and paritions}
We introduced three objects 
that are in bijection: partitions,
$0$-Grassmannian permutations, 
and sets $I \subset \Z$ such that $I$ (resp. $\Z \setminus I$)
is bounded from above (resp. below).
We have explicitly described the bijection 
from the first to the second, 
and from the second to the third. 
The explicit descriptions of the remaining bijections
are left as exercises.
We will use these other bijections implicitly;
for instance, given a partition $\lambda$,
we may write ``let $I$ be the
set corresponding to $\lambda$''. 
\end{rem}

For $i < j$, let $\tau_{i,j}$ denote the \definition{transposition}
\[
\tau_{i,j}(i)=j,\quad \tau_{i,j}(j)=i,\quad \tau_{i,j}(p)=p \quad (p\notin\{i,j\}).
\]
Let $s_i:= \tau_{i, i+1}$ denote the \definition{adjacent transposition}.
The set $\{s_i: i\in\Z\}$ generates $S_\Z$ as a group.
Thus every $w\in S_\Z$ has an expression $w=s_{i_1}\cdots s_{i_m}$.
When $m$ is minimal, such expression is called \definition{reduced}, and the \definition{length} $\ell(w)$ is defined to be $m$.
Equivalently, 
$\ell(w) := |\{(i,j)\in \Z^2 : i < j,\, w(i) > w(j)\}|$.
We record a classical fact about reduced expressions for later use.

\begin{lem}
\label{L: Des and red}
For $w \in S_\Z$ and $j \in \Z$, 
$j\in \Des(w)$
if and only if $s_j$ appears at the end
of some reduced expression of $w$.  
\end{lem}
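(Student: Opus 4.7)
The plan is to reduce both directions of the biconditional to a single key lemma: for every $w \in S_\Z$ and every $j \in \Z$, one has $\ell(ws_j) = \ell(w) - 1$ if $j \in \Des(w)$, and $\ell(ws_j) = \ell(w) + 1$ otherwise. Once this length-change formula is available, both implications follow from short manipulations of reduced expressions.

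To prove the key formula, I would work directly with the inversion description $\ell(w) = |\{(i,k) \in \Z^2 : i < k,\, w(i) > w(k)\}|$ recorded just before the lemma. Right-multiplication by $s_j$ swaps the values $w(j)$ and $w(j+1)$ and leaves all other values fixed, so the inversion sets of $w$ and $ws_j$ can only differ on pairs involving position $j$ or $j+1$. For any $i \notin \{j, j+1\}$, a case split on whether $i < j$ or $i > j+1$ shows that the combined contribution of the pair $\{(i,j),(i,j+1)\}$ (or of $\{(j,i),(j+1,i)\}$) to the inversion count depends only on the multiset $\{w(j), w(j+1)\}$ and the value $w(i)$, hence is unchanged by the swap. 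The only remaining pair is $(j, j+1)$ itself, which is an inversion of exactly one of $w$ and $ws_j$, namely of $w$ iff $j \in \Des(w)$. This accounts for the change of $\pm 1$.

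With the key formula in hand, the forward direction is immediate: if $j \in \Des(w)$, then $\ell(ws_j) = \ell(w) - 1$, so choosing any reduced expression $s_{i_1}\cdots s_{i_{m-1}}$ for $ws_j$ and appending $s_j$ produces a reduced expression for $w$ of length $m = \ell(w)$ ending in $s_j$. For the converse, if $w = s_{i_1}\cdots s_{i_m}$ is a reduced expression with $i_m = j$, then $ws_j = s_{i_1}\cdots s_{i_{m-1}}$ has length at most $m - 1 < \ell(w)$, and the key formula forces $j \in \Des(w)$, since lengths may only change by $\pm 1$.

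The main obstacle is the inversion-counting case analysis; it is classical and unenlightening, but one must keep track of the cases carefully. No subtlety arises from working in $S_\Z$ rather than a finite symmetric group, because elements of $S_\Z$ have only finitely many non-fixed points, so the inversion set is still finite and $\ell(w)$ is well-defined.
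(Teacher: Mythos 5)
Your proof is correct. The paper does not supply a proof of this lemma at all---it is stated with the preamble ``We record a classical fact about reduced expressions for later use''---so there is nothing in the paper to compare against. Your argument, reducing both directions to the classical length-change formula $\ell(ws_j)=\ell(w)\pm 1$ (with $-1$ exactly when $j\in\Des(w)$) and establishing that formula by the inversion-counting case analysis, is the standard textbook route for this fact, and it does carry over to $S_\Z$ without modification precisely because, as you note, elements of $S_\Z$ have finite inversion sets.
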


Finally, we recall the $\neg$ involution on $S_\Z$.
\begin{defn}~\cite{LLS}
\label{D: neg}
Let $\neg$ be the group automorphism 
on $S_\Z$ by setting $\neg(s_i)=s_{-i}$.
\end{defn}

\begin{lem}\label{L: Negation and Des}
If $w\in S_\Z$ satisfies $\Des(w)\subset \Z_{\leq 0}$,
then $\Des(\neg w)\subset \Z_{\geq 0}$.
\end{lem}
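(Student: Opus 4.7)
The plan is to translate the descent condition into a condition on reduced expressions via Lemma~\ref{L: Des and red} and then exploit the fact that $\neg$ is a length-preserving involution on $S_\Z$ sending simple reflections to simple reflections.

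First I would check a preliminary fact: since $\neg$ is a group automorphism defined by $\neg(s_i)=s_{-i}$, it maps the generating set $\{s_i : i \in \Z\}$ to itself bijectively and satisfies $\neg^2 = \mathrm{id}$. In particular, if $v = s_{i_1}\cdots s_{i_m}$ is any word in the generators, then $\neg(v) = s_{-i_1}\cdots s_{-i_m}$ is a word of the same length. Taking the minimum over all such words shows $\ell(\neg v) = \ell(v)$, and consequently $\neg$ sends reduced expressions to reduced expressions.

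Now suppose $j \in \Des(\neg w)$. By Lemma~\ref{L: Des and red}, there is a reduced expression $\neg w = s_{i_1}\cdots s_{i_{m-1}} s_j$ ending in $s_j$. Applying the involution $\neg$ (and using $\neg^2 = \mathrm{id}$) gives
\[
w = \neg(\neg w) = s_{-i_1}\cdots s_{-i_{m-1}} s_{-j},
\]
which by the preliminary observation is again a reduced expression. Thus $s_{-j}$ appears at the end of a reduced expression of $w$, so by Lemma~\ref{L: Des and red} we have $-j \in \Des(w)$. The hypothesis $\Des(w) \subset \Z_{\leq 0}$ then forces $-j \leq 0$, i.e.\ $j \in \Z_{\geq 0}$, as desired.

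There is essentially no obstacle here: everything reduces to the observation that a group automorphism permuting the Coxeter generators preserves length, combined with the characterization of descents via reduced expressions. The only point to state cleanly is that $\neg$ is an involution (so that descents transfer in both directions), which is immediate from $\neg(s_i)=s_{-i}$ and $-(-i)=i$.
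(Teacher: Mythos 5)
Your proof is correct and follows essentially the same approach as the paper: both translate descents into reduced expressions via Lemma~\ref{L: Des and red} and use that $\neg$ is a length-preserving automorphism (hence carries reduced expressions to reduced expressions). Your version is marginally more careful in that it argues contrapositively starting from an arbitrary descent of $\neg w$, and it makes explicit the length-preservation observation that the paper leaves implicit; otherwise the arguments coincide.
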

\begin{proof}
By Lemma~\ref{L: Des and red},
since $\Des(w)\subset \Z_{\leq 0}$,
any reduced expression of $w$ ends with some $s_j$
with $j \leq 0$.
Applying $\neg$ yields a reduced expression of $\neg w$
ending with $s_{-j}$ with $-j\geq 0$.
By Lemma~\ref{L: Des and red} again,
$\Des(\neg w)\subset \Z_{\geq 0}$.
\end{proof}

\medskip
For a partition $\lambda$, 
the \definition{double Schur function} $s_\lambda(\x || \y)$
is a function involving $x_i$ with $i \in \Zn$ and $\y$--variables.
Recall from~\S\ref{S: Intro} that 
$s_\lambda(\x || \y) := \bfS_{w_\lambda}(\x;\y)$.
We also recall the involution $\widetilde{\omega}:=\omega_1\otimes\omega_2$ on
$\Q(\x;\y)\otimes \Lambda(\x;\y)$ from \cite{LLS}; where $\omega_1$ is the involution on $\Q(\x;\y)$ defined by
\[
\omega_1(x_i)=x_{1-i},\qquad
\omega_1(y_i)=y_{1-i};
\]
and $\omega_2$ is the involution on $\Lambda(\x||\y)$ defined by $\omega_2(s_\lambda)=s_{\lambda'}$,
where $\lambda'$ is the \definition{conjugation}
of $\lambda$:
$$
\lambda' = (\lambda'_1, \lambda_2', \dots),
\textrm{ where $\lambda_i'$ is the number of 
entries in $\lambda$ that are at least $i$.}
$$
The effect of $\widetilde{\omega}$ on back stable Schubert functions
is characterized by the involution $\neg$
on $S_\Z$.

\begin{pro}[\cite{LLS}*{Proposition 4.19}]
\label{P: Omega and Schub}
For $w\in S_\Z$, $\bfS_w = \widetilde{\omega} (\bfS_{\neg w})$.
\end{pro}

\begin{cor}
\label{C: Omega}
For $w\in S_\Z$, 
$a^w_\lambda = \omega_1(a^{\neg w}_{\lambda'})$
and $j^w_\lambda(\y) = \omega_1(j^{\neg w}_{\lambda'}(\y))$.
\end{cor}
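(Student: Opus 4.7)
The plan is to apply $\widetilde{\omega}=\omega_1\otimes\omega_2$ to the double Schur expansion of $\bfS_{\neg w}$ and compare coefficients. Concretely, I would first write
\[
\bfS_{\neg w}(\x;\y) \;=\; \sum_{\mu} a^{\neg w}_\mu(\x;\y)\, s_\mu(\x\|\y),
\]
and then apply $\widetilde{\omega}$ termwise. Because $\widetilde{\omega}$ factors as a tensor product of $\omega_1$ acting on coefficients in $\Q(\x;\y)$ and $\omega_2$ acting on $\Lambda(\x\|\y)$, and $\omega_2$ sends $s_\mu$ to $s_{\mu'}$, this yields
\[
\widetilde{\omega}(\bfS_{\neg w})
\;=\; \sum_{\mu} \omega_1\!\bigl(a^{\neg w}_\mu(\x;\y)\bigr)\, s_{\mu'}(\x\|\y).
\]
By Proposition~\ref{P: Omega and Schub} the left-hand side equals $\bfS_w(\x;\y)$, which itself expands as $\sum_\lambda a^w_\lambda(\x;\y)\, s_\lambda(\x\|\y)$.

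The next step is the reindexing: since $\mu\mapsto\mu'$ is an involution on partitions, setting $\lambda=\mu'$ gives $\mu=\lambda'$, and the two expansions become
\[
\sum_\lambda a^w_\lambda(\x;\y)\, s_\lambda(\x\|\y)
\;=\; \sum_\lambda \omega_1\!\bigl(a^{\neg w}_{\lambda'}(\x;\y)\bigr)\, s_\lambda(\x\|\y).
\]
Since the double Schur functions $\{s_\lambda(\x\|\y)\}$ form a $\Q(\y)$-basis of $\Lambda(\x\|\y)\otimes_{\Q[\y]}\Q(\y)$, matching coefficients yields the first identity $a^w_\lambda=\omega_1(a^{\neg w}_{\lambda'})$.

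For the second identity, I would simply specialize $\x\mapsto\y$ and check that this commutes with $\omega_1$ in the relevant sense. Writing $g(\x;\y):=a^{\neg w}_{\lambda'}(\x;\y)$, the first identity gives $a^w_\lambda(\x;\y)=g(\x;\y)\bigl|_{x_i\mapsto x_{1-i},\,y_i\mapsto y_{1-i}}$. Setting $\x=\y$ on both sides sends $x_{1-i}\mapsto y_{1-i}$, so
\[
j^w_\lambda(\y)\;=\;a^w_\lambda(\y;\y)\;=\;g(\x;\y)\bigl|_{x_i\mapsto y_{1-i},\,y_i\mapsto y_{1-i}}
\;=\; g(\y;\y)\bigl|_{y_i\mapsto y_{1-i}}
\;=\;\omega_1\!\bigl(j^{\neg w}_{\lambda'}(\y)\bigr).
\]
There is no real obstacle; the only subtlety is the bookkeeping in this last specialization, namely checking that substituting $x_i\mapsto y_{1-i}$ together with $y_i\mapsto y_{1-i}$ in $g(\x;\y)$ agrees with first specializing $\x\mapsto \y$ and then applying $\omega_1$. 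I would include the short verification above to make this transparent.
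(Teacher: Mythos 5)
Your proposal is correct and takes essentially the same route as the paper: expand Proposition~\ref{P: Omega and Schub} into the double Schur basis, use $\widetilde{\omega}=\omega_1\otimes\omega_2$ with $\omega_2(s_\mu)=s_{\mu'}$, reindex, and compare coefficients, then deduce the second identity from the commutation of $\omega_1$ with the specialization $\x\mapsto\y$. The only difference is that you spell out the commutation check that the paper leaves as a one-line remark.
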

\begin{proof}
We expand the equation in Proposition~\ref{P: Omega and Schub}
into the double Schur basis:
$$
\sum_{\lambda} a^w_{\lambda}(\x;\y) s_\lambda(\x||\y)
=  \sum_{\lambda} \omega_1(a^{\neg w}_\lambda(\x;\y)) s_{\lambda'}(\x||\y).
$$
Extracting the coefficient of $s_\lambda(\x || \y)$ on both sides gives
the first equation in our corollary. 
The second equation follows
from the first since $\omega_1$ commutes
with the operator on $\Q(\x; \y)$ that sets $\x$ to $\y$.
\end{proof}

We end this section
with one observation on $\omega_1$.
\begin{lem}
\label{L: Omega}
We consider the action of $\omega_1$ 
on terms $(y_i-y_j)$ with $i\prec j$.
Then $\omega_1$ maps type~1 terms to type~2,
type~2 terms to type~1,
and type~3 terms to type~3.
\end{lem}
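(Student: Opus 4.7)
The plan is to proceed by direct case analysis on the three types, using only the defining formula $\omega_1(y_i) = y_{1-i}$, which immediately gives $\omega_1(y_i - y_j) = y_{1-i} - y_{1-j}$. For each input type I will identify the type of the output, keeping in mind the sign ambiguity $y_a - y_b = -(y_b - y_a)$ so that the resulting pair of indices can be listed in $\prec$-order.

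For a type~1 input, $0 < i < j$ forces $1-j < 1-i \le 0$, so the image is $-(y_{1-j} - y_{1-i})$, a signed type~2 term. For a type~2 input, $i < j \le 0$ forces $0 < 1-j < 1-i$, so the image is a signed type~1 term. For a type~3 input, $j \le 0 < i$ forces $1-i \le 0 < 1-j$, so the image is a signed type~3 term. In each case the asserted image type follows directly from comparing the two integers $1-i$ and $1-j$ to $0$.

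There is no substantive obstacle here; the only minor subtlety is the sign convention. The lemma should be read (and is applied in the sequel) as the statement that $\omega_1$ sends each term of type $T$ to $\pm 1$ times a term of the asserted type $T'$. Since the later applications of Lemma~\ref{L: Omega} only track the \emph{types} of the linear factors appearing in each monomial of a polynomial (in order to match Anderson's refined positivity in Theorem~\ref{T: Anderson}), the overall signs are immaterial and the case analysis above suffices.
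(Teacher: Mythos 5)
Your proof is correct and matches the paper's approach exactly: the paper's entire proof is the phrase ``Direct computation,'' and you have carried out that computation. Your observation about the sign (that $\omega_1(y_i-y_j) = y_{1-i}-y_{1-j} = -(y_{1-j}-y_{1-i})$ is $-1$ times a term of the asserted type, uniformly across all three cases) is a fair and careful reading of a statement the paper leaves slightly informal.
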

\begin{proof}
Direct computation.
\end{proof}

\subsection*{Bumpless pipedreams}
In the work of Lam, Lee, and Shimozono, 
BPDs were originally defined as certain tilings
of rectangular regions. 
We begin by defining BPDs
on a more general family of regions.

Define \definition{row $r$} and \definition{column $c$} as
\[
\row(r):=\{(r,i):i\in\Z\}, \qquad
\col(c):=\{(i,c):i\in\Z\}.
\]
For $\A \subseteq \Z \times \Z$,
let $R(\A)$ (resp.~$C(\A)$)
be the set of $i$ such that
$\row(i) \cap \A$ (resp.~$\col(i) \cap \A$) is nonempty. 

We say that $\A$ is a \definition{trapezoid}
if the following conditions hold:
\begin{itemize}
\item The set $R(\A)$ is an interval of integers.
\item For every $r \in R(\A)$,
the intersection $\A \cap \row(r)$ is an interval.
\item The intervals $\A \cap \row(r)$ form a chain as $r$
ranges over $R(\A)$ from smallest to largest (or vice versa).
\end{itemize}
Intuitively, a trapezoid is a connected set of cells
whose rows form contiguous intervals
that grow from top to bottom
or bottom to top.
\begin{rem}
    We adopt this definition for regions to include all rectangles, $\A^{\tri}$, $\A^{\tra}$ (see Definition~\ref{D: Tra}), as well as horizontal cuts (see Section~\ref{S: horizontal cut}) of these regions.
\end{rem}

\begin{defn}
\label{D: Tra}
We introduce notations for the following main infinite trapezoids:
\begin{itemize}
\item The whole plane: $\Z \times \Z$;
\item The upper half-plane: $H^\uparrow :=\Z_{\leq 0} \times \Z$;
\item The lower half-plane: 
$H^\downarrow := \Z_{>0} \times \Z$;
\item The ``triangular'' region
in $H^\downarrow$ above the diagonal:
$\A^{\tri}:= \{(i, j) : 0 < i < j\}$;
\item The ``trapezoid'' region
in $H^\downarrow$ weakly below the diagonal:
$\A^{\tra}:= \{(i, j) : 0 < i \textrm{ and }j \leq i\}$. \qedhere
\end{itemize}
\end{defn}
We also study trapezoids of finite size, such as
the square $[n] \times [n]$.

\medskip

A \definition{tiling} of a trapezoid $\A$ is a map 
$D:\A\longrightarrow
\{\btile,\htile,\vtile,\rtile,\jtile,\ptile\}$
where the six tiles 
are decorated by blue pipes.
We say that a tile has a pipe that 
\definition{connects to the right} (resp.~left, up, down) if a pipe is connected to the right (resp.~left, top, bottom) edge of this tile.
We say $D$ is \definition{consistent}
if pipes connect along adjacent edges:
\begin{itemize}
\item If $(r,c),(r,c+1)\in\A$, then
$D(r,c)$ has a pipe connecting to the right
iff $D(r,c+1)$ has a pipe connecting to the left;
\item If $(r,c),(r+1,c)\in\A$, then
$D(r,c)$ has a pipe connecting to the bottom
iff $D(r+1,c)$ has a pipe connecting to the top.
\end{itemize}
In a consistent tiling, pipes are traced from their entries on the right or top edges to their exits on the left or bottom edges.
When two pipes meet in a $\ptile$,
the pipe entering from the right exits to the left,
while the pipe entering from the top exits to the bottom;
in this case we say the two pipes \definition{cross}.

A consistent tiling $D$ is \definition{reduced} if
each pair of pipes crosses each other at most once.
A \definition{labeling} of $D$ is an assignment of distinct integers in $\Z$ to its pipes. 
We say a labeling is \definition{valid} 
if, at each crossing, the vertical pipe has a smaller label than the horizontal one.
Clearly, having a valid labeling implies
the tiling is reduced. 

We impose one last requirement
on tilings:

\begin{defn}
Let $D$ be a consistent tiling of a trapezoid $\A$.
For $r \in \Z$ with $\A\cap\row(r)\neq\varnothing$:
\begin{itemize}
\item If $\A\cap\row(r)$ has a rightmost cell $(r,c)$,
then a pipe \definition{enters from row $r$}
if it connects to the right edge of $(r,c)$.
\item Otherwise, a pipe \definition{enters from row $r$}
if it occupies $(r,c)$ for all sufficiently large $c$.
\end{itemize}
Definitions of \definition{entering from a column},
\definition{exiting from a row}, and
\definition{exiting from a column} are analogous.
We say $D$ is \definition{stabilized}
if every pipe in $D$ enters from a row 
and exits from a column.
\end{defn}

From now on we restrict to consistent, reduced, 
and stabilized tilings,
called \definition{bumpless pipedreams (BPDs)}.

\begin{exa}
The following are two BPDs of $\Z \times \Z$ (left) and  $\A^{\tra}$ (right). 
We only draw a finite region of these two BPDs, so for any $(i,j)$ not depicted, it is $\rtile$ if $i = j$,
$\vtile$ if $i > j$ and $\htile$ if $i < j$.
$$
    \begin{tikzpicture}[x=1.5em,y=1.5em,thick,rounded corners,color = blue]
        \draw[step=1,gray,ultra thin] (0,0) grid (8,8);
        \draw[color=blue, thick] (0.5,0)--(0.5,5.5)--(5.5,5.5)--(5.5,6.5)--(8,6.5);
        \draw[color=blue, thick] (1.5,0)--(1.5,3.5)--(3.5,3.5)--(3.5,4.5)--(8,4.5);
        \draw[color=blue, thick] (2.5,0)--(2.5,6.5)--(3.5,6.5)--(3.5,7.5)--(8,7.5);
        \draw[color=blue, thick] (3.5,0)--(3.5,0.5)--(8,0.5);
        \draw[color=blue, thick] (4.5,0)--(4.5,2.5)--(8,2.5);
        \draw[color=blue, thick] (5.5,0)--(5.5,3.5)--(6.5,3.5)--(6.5,5.5)--(8,5.5);
        \draw[color=blue, thick] (6.5,0)--(6.5,1.5)--(8,1.5);
        \draw[color=blue, thick] (7.5,0)--(7.5,3.5)--(8,3.5);
        \node[color=black] at (-0.5,7.5) {$\circled{-3}$};
        \node[color=black] at (-0.5,6.5) {$\circled{-2}$};
        \node[color=black] at (-0.5,5.5) {$\circled{-1}$};
        \node[color=black] at (-0.5,4.5) {$\circled{0}$};
        \node[color=black] at (-0.5,3.5) {$\circled{1}$};
        \node[color=black] at (-0.5,2.5) {$\circled{2}$};
        \node[color=black] at (-0.5,1.5) {$\circled{3}$};
        \node[color=black] at (-0.5,0.5) {$\circled{4}$};
        \node[color=black] at (0.5,-0.5) {$\circled{-3}$};
        \node[color=black] at (1.5,-0.5) {$\circled{-2}$};
        \node[color=black] at (2.5,-0.5) {$\circled{-1}$};
        \node[color=black] at (3.5,-0.5) {$\circled{0}$};
        \node[color=black] at (4.5,-0.5) {$\circled{1}$};
        \node[color=black] at (5.5,-0.5) {$\circled{2}$};
        \node[color=black] at (6.5,-0.5) {$\circled{3}$};
        \node[color=black] at (7.5,-0.5) {$\circled{4}$};
    \end{tikzpicture}
    \quad \quad \quad 
    \begin{tikzpicture}[x=1.5em,y=1.5em,thick,rounded corners,color = blue]
        \draw[step=1,gray,ultra thin] (0,0) grid (9,1);
        \draw[step=1,gray,ultra thin] (0,1) grid (8,2);
        \draw[step=1,gray,ultra thin] (0,2) grid (7,3);
        \draw[step=1,gray,ultra thin] (0,3) grid (6,4);
        \draw[step=1,gray,ultra thin] (0,4) grid (5,5);
        \draw[color=blue, thick] (0.5,0)--(0.5,3.5)--(1.5,3.5)--(1.5,4.5)--(5,4.5);
        \draw[color=blue, thick] (1.5,0)--(1.5,1.5)--(2.5,1.5)--(2.5,3.5)--(3.5,3.5)--(3.5,5);
        \draw[color=blue, thick] (2.5,0)--(2.5,0.5)--(9,0.5);
        \draw[color=blue, thick] (3.5,0)--(3.5,1.5)--(8,1.5);
        \draw[color=blue, thick] (4.5,0)--(4.5,3.5)--(6,3.5);
        \draw[color=blue, thick] (5.5,0)--(5.5,2.5)--(7,2.5);
        \draw[color=blue, thick] (6.5,0)--(6.5,3);
        \draw[color=blue, thick] (7.5,0)--(7.5,2);
        \draw[color=blue, thick] (8.5,0)--(8.5,1);
        \node[color=black] at (-0.5,4.5) {$\circled{1}$};
        \node[color=black] at (-0.5,3.5) {$\circled{2}$};
        \node[color=black] at (-0.5,2.5) {$\circled{3}$};
        \node[color=black] at (-0.5,1.5) {$\circled{4}$};
        \node[color=black] at (-0.5,0.5) {$\circled{5}$};
        \node[color=black] at (0.5,-0.5) {$\circled{-3}$};
        \node[color=black] at (1.5,-0.5) {$\circled{-2}$};
        \node[color=black] at (2.5,-0.5) {$\circled{-1}$};
        \node[color=black] at (3.5,-0.5) {$\circled{0}$};
        \node[color=black] at (4.5,-0.5) {$\circled{1}$};
        \node[color=black] at (5.5,-0.5) {$\circled{2}$};
        \node[color=black] at (6.5,-0.5) {$\circled{3}$};
        \node[color=black] at (7.5,-0.5) {$\circled{4}$};
        \node[color=black] at (8.5,-0.5) {$\circled{5}$};
    \end{tikzpicture}
$$
We put the row labels and column labels
near the rows and columns in a circle. 
\end{exa}

\medskip

\begin{defn}
\label{D: Boudnary condition}
A \definition{boundary condition} $\gamma$ 
of a region $\A$ is a tuple of four maps $(\gamma_N, 
\gamma_E, \gamma_S, \gamma_W)$,
where $\gamma_N, \gamma_S$ are 
from $C(\A)$ to $\Z \sqcup \{\varnothing\}$ and $\gamma_E, \gamma_W$ are 
from $R(\A)$ to $\Z \sqcup \{\varnothing\}$.

Let $D$ be a BPD of $\A$ with a valid labeling.
We say $D$ and this labeling \definition{satisfy} $\gamma$
if: 
\begin{itemize}
\item Row $r$ has a pipe entering (resp.~exiting)
if and only if $\gamma_E(r)$ (resp.~$\gamma_W(r)$) is not $\varnothing$.
In this case, this pipe has label
$\gamma_E(r)$ (resp.~$\gamma_W(r)$).
\item Column $c$ has a pipe entering (resp.~exiting)
if and only if $\gamma_N(c)$ (resp.~$\gamma_S(c)$) is not $\varnothing$.
In this case, this pipe has label
$\gamma_N(c)$ (resp.~$\gamma_S(c)$).
\end{itemize}

Now for a BPD $D$ of the region $\A$,
we say $D$ \definition{satisfies}
$\gamma$ if there exists a valid labeling
such that $D$ and the labeling satisfy $\gamma$.
We say this labeling is \definition{the
labeling of $D$ with respect to $\gamma$}.
Let $\BPD(\A, \gamma)$ 
be the set of all BPDs of the region $\A$
that satisfy $\gamma$.
\end{defn}
\begin{exa}
\label{Ex: boundary conditions}
The following is a BPD $D$ of the square 
$[6] \times [6]$.
It satisfies the boundary condition $\gamma$ and $\delta$.
We represent $\gamma$ by writing numbers on the corresponding side of the first square. 
For instance, by looking at number on top of the first square, 
we know $\gamma_N(2) = -3$, 
$\gamma_N(4) = 1$, $\gamma_N(6) = 0$
and $\gamma_N(i) = \varnothing$ if $i = 1, 3$ or $5$.
We represent $\delta$ by numbers on the side
of the second square.
    $$
    \begin{tikzpicture}[x=1.5em,y=1.5em,thick,rounded corners,color = blue]
        \draw[step=1,gray,ultra thin] (0,0) grid (6,6);
        \draw[color=blue, thick] (0,5.5)--(5.5,5.5)--(5.5,6);
        \draw[color=blue, thick] (0.5,0)--(0.5,3.5)--(1.5,3.5)--(1.5,6);
        \draw[color=blue, thick] (0,2.5)--(2.5,2.5)--(2.5,4.5)--(3.5,4.5)--(3.5,6);
        \draw[color=blue, thick] (0,1.5)--(3.5,1.5)--(3.5,2.5)--(6,2.5);
        \draw[color=blue, thick] (3.5,0)--(3.5,0.5)--(6,0.5);
        \draw[color=blue, thick] (4.5,0)--(4.5,3.5)--(6,3.5);
        \draw[color=blue, thick] (5.5,0)--(5.5,4.5)--(6,4.5);
        \node[color=black] at (6.5,4.5) {$-2$};
        \node[color=black] at (6.5,3.5) {$4$};
        \node[color=black] at (6.5,2.5) {$7$};
        \node[color=black] at (6.5,0.5) {$5$};
        \node[color=black] at (0.5,-0.5) {$-3$};
        \node[color=black] at (3.5,-0.5) {$5$};
        \node[color=black] at (4.5,-0.5) {$4$};
        \node[color=black] at (5.5,-0.5) {$-2$};
        \node[color=black] at (1.5,6.5) {$-3$};
        \node[color=black] at (3.5,6.5) {$1$};
        \node[color=black] at (5.5,6.5) {$0$};
        \node[color=black] at (-0.5,1.5) {$7$};
        \node[color=black] at (-0.5,2.5) {$1$};
        \node[color=black] at (-0.5,5.5) {$0$};
    \end{tikzpicture}\quad\quad\quad\quad
\begin{tikzpicture}[x=1.5em,y=1.5em,thick,rounded corners,color = blue]
        \draw[step=1,gray,ultra thin] (0,0) grid (6,6);
        \draw[color=blue, thick] (0,5.5)--(5.5,5.5)--(5.5,6);
        \draw[color=blue, thick] (0.5,0)--(0.5,3.5)--(1.5,3.5)--(1.5,6);
        \draw[color=blue, thick] (0,2.5)--(2.5,2.5)--(2.5,4.5)--(3.5,4.5)--(3.5,6);
        \draw[color=blue, thick] (0,1.5)--(3.5,1.5)--(3.5,2.5)--(6,2.5);
        \draw[color=blue, thick] (3.5,0)--(3.5,0.5)--(6,0.5);
        \draw[color=blue, thick] (4.5,0)--(4.5,3.5)--(6,3.5);
        \draw[color=blue, thick] (5.5,0)--(5.5,4.5)--(6,4.5);
        \node[color=black] at (6.5,4.5) {$4$};
        \node[color=black] at (6.5,3.5) {$5$};
        \node[color=black] at (6.5,2.5) {$6$};
        \node[color=black] at (6.5,0.5) {$7$};
        \node[color=black] at (0.5,-0.5) {$1$};
        \node[color=black] at (3.5,-0.5) {$7$};
        \node[color=black] at (4.5,-0.5) {$5$};
        \node[color=black] at (5.5,-0.5) {$4$};
        \node[color=black] at (1.5,6.5) {$1$};
        \node[color=black] at (3.5,6.5) {$2$};
        \node[color=black] at (5.5,6.5) {$3$};
        \node[color=black] at (-0.5,1.5) {$6$};
        \node[color=black] at (-0.5,2.5) {$2$};
        \node[color=black] at (-0.5,5.5) {$3$};
    \end{tikzpicture}
    $$
Since $D$ satisfies both $\gamma$ and $\delta$,
we have $D \in \BPD([6] \times[6], \gamma)$
and $D \in \BPD([6] \times[6], \delta)$.
In fact, one can verify $\BPD([6] \times[6], \gamma) = \BPD([6] \times[6], \delta)$.
\end{exa}

\begin{rem}
\label{R: label change}
As shown by Example~\ref{Ex: boundary conditions},
one BPD may satisfy multiple boundary conditions.
Let $D$ be a BPD of $\A$.
In fact, for any valid labeling of $D$,
we can find a unique boundary condition $\gamma$
satisfied by $D$ and this labeling. 
Then $D \in \BPD(\A, \gamma)$.

In other words, 
it is possible for 
different boundary conditions $\gamma, \delta$,
we get the same set of BPDs: $\BPD(\A, \gamma) = \BPD(\A, \gamma')$.
We may take any $D \in \BPD(\A, \gamma)$
and impose an arbitrary valid labeling on $D$.
Let $\delta$ be the boundary condition
satisfied by $D$ and this new labeling. 
We have $\BPD(\A, \gamma) = \BPD(\A, \delta)$.
\end{rem}

The classical setting of Lam-Lee-Shimozono \cite{LLS}
arises as follows.
\begin{defn}[\cite{LLS}]
\label{D: BPD for Schubert}
For $w\in S_\Z$,
let $\gamma^w$ be the following boundary condition:
\[
\gamma^w_N(i) = \gamma^w_W(i) = \varnothing,\quad
\gamma^w_E(i) = i, \quad
\gamma^w_S(i) = w^{-1}(i),
\]
for all $i \in \Z$.
Then define $\bBPD(w)$ as 
$\BPD(\Z \times \Z, \gamma^w)$.
If $u \in S_n$,
we let $\BPD_n(u) := \BPD([n] \times [n], \gamma^{u, n})$, where $\gamma^{u, n}$
is obtained by restricting 
the maps in $\gamma^u$
to $[n]$.
\end{defn}

Then recall from Theorem~\ref{T: LLS BPD} that 
Lam, Lee and Shimozono used 
$\BPD_n(u)$ (resp. $\bBPD(w)$) to give
combinatorial formulas for $\fS_u(\x; \y)$ 
(resp. $\bfS_w(\x; \y)$).

We end this section by listing a few basic properties
of BPDs whose proofs are left as exercises.
First, 
when the region is rectangular, 
the boundary conditions determine
whether certain pipes will cross.

\begin{lem}
\label{L: Boundary and crossing}
Let $\gamma$ be a boundary condition of $\A$
where $\A  = R \times C$ for some
$R, C \subseteq \Z$.
Say $\gamma_E(r) = \gamma_S(c) = r$ and $\gamma_E(r') = \gamma_S(c') = r'$ with $r < r'$.
Then in any $D \in \BPD(\A, \gamma)$,
pipe $r$ and pipe $r'$ cross 
if and only if $c > c'$.
\end{lem}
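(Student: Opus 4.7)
My plan is a Jordan-curve parity argument. In a BPD of a rectangle $\A = R \times C$, each labeled pipe traces a simple arc inside $\A$ whose two endpoints lie on the boundary $\partial\A$, and these endpoints are precisely what the boundary condition $\gamma$ records. Under the hypotheses, the pipe labeled $r$ is the arc from the east-edge point in row $r$ to the south-edge point in column $c$, while the pipe labeled $r'$ runs from the east-edge point in row $r'$ to the south-edge point in column $c'$. Since $\gamma_S$ is a function on $C(\A)$ and the two pipes are distinct, the case $c=c'$ is impossible, so we may assume $c \neq c'$.

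The first step is to list these four endpoints in clockwise order around $\partial\A$, starting from the northeast corner. The east edge is traversed from top to bottom, so the hypothesis $r < r'$ puts the row-$r$ endpoint before the row-$r'$ endpoint. The south edge is then traversed from right to left, so the column-$c$ endpoint precedes the column-$c'$ endpoint exactly when $c > c'$. Consequently, the endpoints of pipe $r$ and pipe $r'$ \emph{interleave} on $\partial\A$ if and only if $c > c'$; otherwise they are separated.

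Next I invoke the standard topological fact that two simple arcs in a closed disk whose endpoints lie on the boundary and which meet transversally must cross an odd number of times when the endpoints interleave and an even number of times otherwise. The only tile producing a crossing is $\ptile$, which is transverse, so this applies. Because $D$ is reduced, any two pipes cross at most once, so an odd count forces exactly one crossing and an even count forces zero crossings. Combining this with the preceding step yields the desired equivalence.

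The main obstacle, and it is a mild one, is topological bookkeeping when $R$ or $C$ is infinite: one must verify that stabilization allows us to treat each pipe as a simple arc between two well-defined boundary points (possibly compactified at infinity), so that the Jordan-curve parity argument still applies on the resulting closed disk. This is immediate in the finite case from the tile shapes and consistency; in the infinite case one adds a point at infinity to each unbounded edge and observes that stabilization guarantees each pipe has a well-defined limiting row/column, making the compactified argument identical to the finite one.
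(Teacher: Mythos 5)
The paper leaves this lemma (and its two companions) as exercises, so there is no author proof to compare against; I assess your argument directly.

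Your Jordan-curve parity argument is the right idea and is complete in the finite case. The boundary bookkeeping is correct: with $r<r'$ on the east edge, the two pairs of endpoints interleave in clockwise order exactly when $c>c'$ on the south edge; the mod-$2$ intersection number of two transverse simple arcs in a closed disk with four distinct boundary endpoints is $1$ precisely in the interleaving case; and since two pipes in a BPD can only meet at a $\ptile$ (transversal) while reducedness caps the crossing count at one, parity determines the exact crossing number.

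The gap is in the infinite case, which is in fact the one the paper actually uses (for $H^\uparrow$, $\Z\times\Z$, $\A^{\tra}$, and so on). "Adding a point at infinity to each unbounded edge" does not give a disk with the four endpoints still distinct: if $C$ is unbounded above, pipes $r$ and $r'$ both run east without bound, and a single point at infinity on the east edge would identify their entry points, collapsing the interleaving condition. Your own observation, that stabilization gives each pipe a well-defined limiting row and column, suggests the correct fix, which is a truncation rather than a compactification. By stabilization there is a bounded region outside of which pipe $r$ is a straight $\htile$ in row $r$ going east and a straight $\vtile$ in column $c$ going south (and likewise for pipe $r'$), and in particular every crossing of the two pipes lies inside that region. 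Choose a finite sub-rectangle $\A'=R'\times C'\subseteq\A$ with $R'\subseteq R$ and $C'\subseteq C$ intervals large enough to contain it. Then $D|_{\A'}$ is a finite reduced BPD in which pipe $r$ enters from row $r$ and exits from column $c$ (likewise for $r'$), and the two pipes cross in $D$ if and only if they cross in $D|_{\A'}$; the finite case of your argument then applies verbatim to $D|_{\A'}$.
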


Next, notice that the descent set $\Des(w)$
gives us some information on $\bBPD(w)$.
\begin{lem}
\label{L: Fixed under Des}
Let $w \in S_\Z$ and set $m = \max(\Des(w))$.
Then all bumpless pipedreams in $\bBPD(w)$
agree under row~$m$,
and they have no blank tiles under this row.
\end{lem}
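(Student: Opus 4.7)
My plan is to pin down a unique candidate configuration $\Phi$ for the tiles in rows $>m$ and verify it is blank-free, then show every $D\in\bBPD(w)$ restricts to $\Phi$ below row~$m$. Set $c_k:=w(m+k)$ for $k\ge1$; since $m=\max\Des(w)$, the sequence $c_1<c_2<\cdots$ is strictly increasing, and $J:=\Z\setminus\{c_k:k\ge 1\}=\{w(s):s\le m\}$. Define $\Phi$ so that, for each $k\ge1$, pipe $m+k$ takes an L-shape: it enters row $m+k$ from the right, runs horizontally leftward to column $c_k$, turns down via $\rtile$, and descends vertically in column $c_k$; for each $c\in J$, pipe $w^{-1}(c)\le m$ descends vertically in column $c$ at every row $>m$. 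A direct case analysis (according to whether $c>c_{r-m}$, $c=c_{r-m}$, or $c<c_{r-m}$, and whether $c\in J$ or $c=c_{k'}$ for some $k'$) shows the tile at $(r,c)$ with $r>m$ is always one of $\htile,\vtile,\rtile,\ptile$; in particular none of them is $\btile$.

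The core of the argument is to show every $D\in\bBPD(w)$ restricts to $\Phi$ in rows $>m$. I will use the standard fact that, in any $D\in\bBPD(w)$, pipes $i<j$ cross exactly when $(i,j)$ is an inversion of $w$. Since $w$ is increasing on $\{m+1,m+2,\ldots\}$, no two pipes with labels $>m$ cross in $D$. Together with the boundary data $\gamma^w$ (pipe $m+k$ enters row $m+k$ from the right and exits down column $c_k$, with $c_k$ strictly increasing in $k$) and stabilization, this non-crossing constraint forces pipe $m+k$ into the L-shape. Indeed, if pipe $m+k$ turned down in row $m+k$ at some column $c>c_k$, then either $c=c_{k'}$ for some $k'>k$, contradicting that column $c$ is ultimately occupied by pipe $m+k'$; or $c\in J$, in which case pipe $m+k$ must eventually turn left at some lower row $r'$ to reach column $c_k$, and a local check at row $r'$ shows either pipe $m+k$ is forced to cross some pipe $m+k''$ with $k''>k$ (violating the non-crossing above) or pipe $w^{-1}(c)$ cannot reach its exit column without moving rightward, which BPDs forbid.

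Once each pipe $m+k$ is pinned into its L-shape, the remaining vertical columns $c\in J$ at rows $>m$ must host exactly the pipe $w^{-1}(c)$: any alternative would again require some pipe $\le m$ to travel rightward or two pipes to occupy the same column. Collating the tiles cell-by-cell recovers $\Phi$ exactly, proving both assertions. The main obstacle is rigorously carrying out the case analysis showing pipe $m+k$ cannot deviate from its L-shape; I would implement this by downward induction on the row $r$, starting from the stabilized regime at large $r$ (where every cell is $\vtile$ by the column boundary data alone) and working upward to $r=m+1$, at each inductive step using the non-crossing property of pipes with labels $>m$ together with the already-pinned tiles in rows $>r$ to uniquely determine the tile at every $(r,c)$.
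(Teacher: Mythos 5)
Your overall plan—identify the target configuration $\Phi$ as $\Rothe_w$ restricted to rows $>m$, check it is blank-free, and run a downward induction on $r$ from the stabilized regime using that no two pipes with labels $>m$ cross—is the right one, and the non-crossing input (equivalent to $w$ increasing past position $m$) is correctly identified. But the actual content lives in the inductive step, and the two ad hoc case analyses you supply in its place do not close the argument. The split into "first pin pipes $>m$ into L-shapes, then pin pipes $\le m$ vertically" is not clean: a pipe $j\le m$ could a priori make a hook (a $\jtile$ followed by an $\rtile$) in a row $r>m$, and this competes with pipe $r$ for cells. And your dichotomy for the pipes $\le m$—"travel rightward or two pipes in the same column"—does not touch the hook scenario at all: a $\jtile$ at $(r,c')$ moves that pipe left, not right, and by itself creates no collision.

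Here is the inductive step that actually does the work, and it treats the labels $\le m$ and $>m$ uniformly. Fix $r>m$ and assume (inductive hypothesis, established from the stabilized regime for large $r$) that the cross section of $D$ below row $r$ places pipe $i$ in column $w(i)$ for every $i\le r$. In particular pipe $r$ sits at column $w(r)$ at the bottom of row $r$, so pipe $r$ turns down at $(r,w(r))$; hence any $\jtile$ in row $r$—a tile with no pipe on its right edge—must sit at a column $c<w(r)$. But a $\jtile$ also carries no pipe on its bottom edge, so by the hypothesis the column $c$ is the exit column of some pipe $s$ with $s>r>m$; then $c=w(s)>w(r)$ because $w$ is increasing past $m$, contradicting $c<w(r)$. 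So row $r$ has no $\jtile$ at all, and the same monotonicity shows that for $c<w(r)$ we must have $w^{-1}(c)\le r$, forcing $(r,c)$ to be $\vtile$ rather than $\btile$; the cells to the right of $(r,w(r))$ are $\htile$ or $\ptile$. Thus row $r$ equals $\Rothe_w$'s row $r$, it has no blanks, and the cross section above row $r$ is again Rothe's, completing the induction. This one calculation is what should replace your "local check at row $r'$" and your dichotomy for pipes $\le m$.
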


Finally, we recall the classical notion of 
the \definition{Rothe diagram} of $w \in S_\Z$.
It can be viewed as an element of $\bBPD(w)$.
In words, it is the BPD where the pipe
entering from row $r$ makes a ``left turn''
in column $w(r)$ and exits from that column.
More specifically, $\Rothe_w(i,j)$ 
is determined by the following five cases: 
\begin{itemize}
\item If $j = w(i)$, then $\Rothe_w(i,j) = \rtile$.
\item If $j < w(i)$ and $i< w^{-1}(j)$, 
then $\Rothe_w(i,j) = \btile$.
\item If $j < w(i)$ and $i > w^{-1}(j)$, 
then $\Rothe_w(i,j) = \vtile$.
\item If $j > w(i)$ and $i< w^{-1}(j)$, 
then $\Rothe_w(i,j) = \htile$.
\item If $j > w(i)$ and $i > w^{-1}(j)$, 
then $\Rothe_w(i,j) = \ptile$.
\end{itemize}

\begin{lem}
\label{L: Rothe}
The Rothe diagram $\Rothe_w$ is in
$\bBPD(w)$ for any $w \in S_\Z$.
\end{lem}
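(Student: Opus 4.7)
The plan is to verify directly from the five-case definition that $\Rothe_w$ is a consistent, reduced, stabilized tiling admitting a valid labeling compatible with $\gamma^w$. I proceed in three stages: consistency, pipe tracing (with labeling and stabilization), and verification of validity and the boundary condition.

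First I would check consistency. The five cases partition $\Z\times\Z$, since the trichotomy $j<w(i)$, $j=w(i)$, $j>w(i)$ is exhaustive and coincides with the corresponding trichotomy between $i$ and $w^{-1}(j)$. For each pair of horizontally or vertically adjacent cells, I enumerate which of $\btile,\htile,\vtile,\rtile,\ptile$ can appear (note $\jtile$ never occurs) and verify matching along the shared edge. For example, for the right edge of $(i,j)$: the tile there has a pipe connecting to the right exactly when it is $\htile$ or $\ptile$, i.e.\ when $j>w(i)$; the tile at $(i,j+1)$ has a pipe connecting to the left exactly when it is $\htile$, $\ptile$, or $\rtile$, i.e.\ when $j+1\ge w(i)$, equivalently $j\ge w(i)$. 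These two conditions agree except when $j=w(i)$, which is handled by the $\rtile$ at $(i,j)$. The vertical adjacencies work identically.

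Next I would trace the pipes and assign labels. Label the pipe entering row $r$ from the right by the integer $r$. A row-by-row inspection of the five cases shows that pipe $r$ occupies the horizontal segment $\{(r,j):j\ge w(r)\}$, passing through $\htile$ at the cells where $w^{-1}(j)>r$ and through $\ptile$ at the cells where $w^{-1}(j)<r$, turns at $(r,w(r))=\rtile$, and then occupies the vertical segment $\{(i,w(r)):i\ge r\}$, alternating $\vtile$ and $\ptile$ according to whether $w(i')$ exceeds or is less than $w(r)$ for $i'$ ranging below $r$ in the appropriate column. In particular, pipe $r$ occupies $(r,c)$ for all sufficiently large $c$ and $(i,w(r))$ for all sufficiently large $i$, so it enters from row $r$ and exits from column $w(r)$; this yields stabilization. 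Moreover, since $w$ has finitely many non-fixed points, all but finitely many pipes are entirely trivial.

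Finally I would verify validity of the labeling and matching with $\gamma^w$. Two distinct pipes $r<r'$ can meet only at a $\ptile$, and such a $\ptile$ sits in the vertical segment of one and the horizontal segment of the other; a direct check shows they meet in at most one cell, and meet exactly when $w(r)>w(r')$, in which case the meeting is at $(r',w(r))$ with pipe $r$ vertical and pipe $r'$ horizontal. Thus the tiling is reduced, and since the vertical pipe carries the smaller label $r<r'$, the labeling is valid. Under this labeling row $r$'s entering pipe has label $r=\gamma^w_E(r)$, column $c$'s exiting pipe has label $w^{-1}(c)=\gamma^w_S(c)$, and no pipe enters from a column or exits from a row, matching $\gamma^w_N\equiv\gamma^w_W\equiv\varnothing$. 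Therefore $\Rothe_w\in\BPD(\Z\times\Z,\gamma^w)=\bBPD(w)$. The proof is essentially organized bookkeeping; the only delicate point is the consistency check at cells where the two pieces of the case analysis meet (transitions between $\htile/\ptile$ and $\rtile$, and between $\vtile/\ptile$ and $\rtile$), which is what the edge-by-edge enumeration in stage one is designed to dispatch.
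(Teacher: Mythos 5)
The paper leaves this lemma (together with Lemmas~\ref{L: Boundary and crossing} and \ref{L: Fixed under Des}) to the reader as an exercise, so there is no proof in the text to compare against; your direct verification supplies exactly what is being asked for. Your overall structure — consistency from the five-case trichotomy, explicit pipe tracing to get stabilization and the labeling, then a crossing analysis giving reducedness and validity, finishing with a boundary check — is correct and is the natural way to do this.

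There is one small slip in the consistency step that is worth fixing, because it reverses which edges the elbow tile carries a pipe on. The $\rtile$ tile connects the \emph{right} and \emph{bottom} edges (it is the ``left-turn'' tile where the pipe comes in from the right and goes out the bottom); it does not connect to the left. So the correct tally is: $(i,j)$ connects to its right edge iff it is $\htile$, $\rtile$, or $\ptile$, i.e.\ iff $j\ge w(i)$; and $(i,j+1)$ connects to its left edge iff it is $\htile$ or $\ptile$ (the tile $\jtile$ would also qualify, but it never occurs in $\Rothe_w$), i.e.\ iff $j+1>w(i)$, which is again $j\ge w(i)$. These two conditions match outright. In your write-up you put $\rtile$ in the wrong list (omitting it from the ``connects to right'' criterion and including it in the ``connects to left'' criterion), which made the two conditions look like $j>w(i)$ versus $j\ge w(i)$; you then repaired the apparent mismatch at $j=w(i)$ by appealing to ``the $\rtile$ at $(i,j)$,'' which is exactly the case that should have been in the first list to begin with. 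The conclusion is right, but the listing is internally inconsistent with the pipe-tracing stage, where you correctly describe pipe $r$ as turning at $(r,w(r))$ from its horizontal segment on the right to its vertical segment below. The rest — in particular the observation that pipes $r<r'$ cross only at $(r',w(r))$ when $w(r)>w(r')$, with $r$ vertical and $r'$ horizontal, giving both reducedness and validity of the labeling — is correct as stated.
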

\begin{exa}
    The following is the Rothe diagram for $w = [-1, -3,1,2,-2,0] \in S_{[-3, 2]}$.
    We only depict the square $[-3, 2] \times [-3, 2]$.
    $$
    \begin{tikzpicture}[x=1.5em,y=1.5em,thick,rounded corners,color = blue]
        \draw[step=1,gray,ultra thin] (0,0) grid (6,6);
        \draw[color=blue, thick] (0.5,0)--(0.5,3.5)--(6,3.5);
        \draw[color=blue, thick] (1.5,0)--(1.5,5.5)--(6,5.5);
        \draw[color=blue, thick] (2.5,0)--(2.5,1.5)--(6,1.5);
        \draw[color=blue, thick] (3.5,0)--(3.5,0.5)--(6,0.5);
        \draw[color=blue, thick] (4.5,0)--(4.5,4.5)--(6,4.5);
        \draw[color=blue, thick] (5.5,0)--(5.5,2.5)--(6,2.5);
        \node[color=black] at (6.5,5.5) {$\overline{3}$};
        \node[color=black] at (6.5,4.5) {$\overline{2}$};
        \node[color=black] at (6.5,3.5) {$\overline{1}$};
        \node[color=black] at (6.5,2.5) {$\overline{0}$};
        \node[color=black] at (6.5,1.5) {$1$};
        \node[color=black] at (6.5,0.5) {$2$};
        \node[color=black] at (0.5,-0.5) {$\overline{1}$};
        \node[color=black] at (1.5,-0.5) {$\overline{3}$};
        \node[color=black] at (2.5,-0.5) {$1$};
        \node[color=black] at (3.5,-0.5) {$2$};
        \node[color=black] at (4.5,-0.5) {$\overline{2}$};
        \node[color=black] at (5.5,-0.5) {$\overline{0}$};
    \end{tikzpicture}
    $$
Again, we write numbers on the side
to represent the boundary condition
$\gamma^w$.
\end{exa}

\subsection*{Increasing chains}
We let $\leq_k$ denote the \definition{$k$-Bruhat order},
whose cover relation is 
$u \lessdot_k u\tau_{a,b}$ if 
$a \leq k < b$ and 
$\ell(u\tau_{a,b}) = \ell(u) + 1$.
Sottile~\cite{So96} introduced and studied
a distinguished family of chains in this order.

\begin{defn}\cite{So96}
\label{D: Inc chain}
Take a chain 
$u_1 \lessdot_k u_2 \lessdot_k \cdots \lessdot_k u_s$
and suppose $u_{i+1} = u_i\tau_{a_i,b_i}$.
This chain is an 
\definition{increasing $k$--chain} if 
\[
u_1(a_1)<u_2(a_2)<\cdots<u_s(a_s).
\]
In words, this means that the smaller value
being swapped increases along the chain.
We write \definition{$u \xrightarrow{k} w$} 
when such a chain from $u$ to $w$ exists.
\end{defn}

\begin{rem}
\label{R: distinct right points}
By~\cite{So96}, the relation $u \xrightarrow{k} w$ is equivalent to the existence of a chain
\[
u = u_1 \lessdot_k u_2 \lessdot_k \cdots \lessdot_k u_s = w,
\]
such that if we write $u_{i+1} = u_i \tau_{a_i,b_i}$, then the indices
$b_1,\dots,b_{s-1}$ are distinct.
\end{rem}

Consider $a, n \in \Z$ 
and $k \in [a, n)$.
Notice that the map $\std_{[a,n]}$
defined in~\eqref{EQ: define std}
is a poset isomorphism between $S_{[a,n]}$ under 
the $k$-Bruhat order
and $S_{n-a+1}$ under 
the $(k - a + 1)$-Bruhat order.
Moreover, for $u, w \in S_{[a,n]}$,
\begin{equation}
\label{EQ: k inc correspondence}
u \xrightarrow{k} w 
\quad \Longleftrightarrow \quad
\std_{[a,n]}(u) \xrightarrow{k - a  +1} \std_{[a,n]}(w).   
\end{equation}

Therefore, results on the $k$-Bruhat order
in $S_n$ can be extended to $S_{[a,n]}$.
For instance, we have the following:

\begin{lem}
\label{L: finiteness of inc chain}
For $a, n \in \Z$,
take $w \in S_{[a,n]}$. 
If $u \xrightarrow{k} w$, then $u \in S_{[a,n]}$.
In particular, for a fixed $w \in S_{[a,n]}$,
the number of increasing $k$-chains to $w$ is finite. 
\end{lem}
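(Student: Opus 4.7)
The plan is to reduce the first assertion to a single-step claim: if $v \in S_{[a,n]}$ and $u \lessdot_k v$ is a cover in the $k$-Bruhat order, then $u \in S_{[a,n]}$. Granted this, induction on the chain length shows that every permutation $u_i$ appearing in any $k$-Bruhat chain $u_1 \lessdot_k u_2 \lessdot_k \cdots \lessdot_k u_s = w$ lies in $S_{[a,n]}$; in particular $u = u_1 \in S_{[a,n]}$, regardless of whether the chain is increasing. For the finiteness assertion, once we know every $u_i \in S_{[a,n]}$, the number of chains is bounded because $S_{[a,n]}$ is finite and the length function strictly increases along the chain (forcing $s - 1 \leq \ell(w)$).

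To prove the single-step claim, write $v = u \tau_{p,q}$ with $p \leq k < q$ and $\ell(v) = \ell(u) + 1$, which in particular implies $u(p) < u(q)$. Since $u$ and $v$ agree on $\Z \setminus \{p, q\}$, to conclude $u \in S_{[a,n]}$ it suffices to show that $p, q \in [a,n]$: then $u(p) = v(q) \in [a,n]$ and $u(q) = v(p) \in [a,n]$, while the remaining non-fixed points of $u$ are inherited from $v$.

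The verification that $p, q \in [a,n]$ is a short case analysis driven by the fact that $v$ fixes every $r \notin [a,n]$, which conflicts with $u(p) < u(q)$. If $p > n$, then $q > p > n$ gives $u(p) = v(q) = q$ and $u(q) = v(p) = p$, contradicting $u(p) < u(q)$. If $p < a$, then $v(p) = p$ forces $u(q) = p$; either $q \notin [a,n]$ reduces to the previous situation, or $q \in [a,n]$ gives $u(p) = v(q) \geq a > p = u(q)$, again a contradiction. The symmetric case $p \in [a,n]$ with $q > n$ is handled the same way: $v(q) = q$ forces $u(p) = q > n \geq u(q)$.

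The main obstacle is keeping the case analysis organized; once one isolates the key mechanism — that a fixed point of $v$ outside $[a,n]$ is transported by $\tau_{p,q}$ into a value incompatible with $u(p) < u(q)$ — each case collapses to the same contradiction, and the lemma follows.
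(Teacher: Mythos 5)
Your proof is correct and takes a genuinely different route from the paper's. Both arguments reduce to a single cover $u \lessdot_k v$, but the paper then invokes the standardization bijection $\std_{[a,n]} : S_{[a,n]} \to S_{n-a+1}$ (equation~\eqref{EQ: define std} and~\eqref{EQ: k inc correspondence}) to reduce the general interval to $[1,n-a+1]$, where it declares the statement ``clear.'' You instead argue directly for arbitrary $[a,n]$: writing $v = u\tau_{p,q}$ with $p < q$ and $u(p) < u(q)$, you show by case analysis that $p,q \in [a,n]$, using only that $v$ fixes everything outside $[a,n]$ and that a fixed point of $v$ swept into position $p$ or $q$ would force $u(p) > u(q)$. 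Your approach is more self-contained and makes the mechanism explicit (the paper's ``clear'' for $a=1$ secretly requires the same style of argument, since the $k$-Bruhat cover is defined in $S_\Z$ and one must still check the cover stays inside $S_n$); the paper's approach is shorter and reuses infrastructure it has already set up. One small exposition point: in your case $p < a$ you treat ``$q \notin [a,n]$'' as a single subcase reducing to the $p>n$ situation — it would be cleaner to say explicitly that whenever both $p,q$ lie outside $[a,n]$ they are both fixed by $v$, giving $u(p)=q > p = u(q)$, since in fact $q$ could be either below $a$ or above $n$ there. Your finiteness argument (lengths strictly increase, so chain length is at most $\ell(w)$, and each step picks from the finite set $S_{[a,n]}$) is fine and is a bit more detailed than the paper's, which leaves finiteness implicit.
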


\begin{proof}
It suffices to assume $u \lessdot_k w$,
so $u = w \tau_{i,j}$. 
When $a = 1$, it is clear that 
$u \in S_n = S_{[a,n]}$.
The general case follows from
applying the $\std_{a,n}$ map. 
\end{proof}
From this lemma, we know that
for a fixed $w \in S_{[a,n]}$ and $k$,
the number of $u$ with $u \xrightarrow{k} w$
is finite. 
We also record several basic facts
for later use.

\begin{lem}
\label{L: Insert large number}
Consider $u, w \in S_{[a,n]}$.
Take $i, k \in [a,n)$.
Construct $u', w' \in S_{[a, n+1]}$
where the one-line notation of $u'$ 
(resp. $w'$) is obtained from that of $u$
(resp. $w$) by inserting $n+1$ after 
$u(i)$ (resp. $w(i)$).
If $i \leq k$,
then 
\[
u \xrightarrow{k} w\quad \Longleftrightarrow \quad u' \xrightarrow{k  +1} w', \qquad \textrm{ and }\fix_{(k, n]}(u,w) = \fix_{(k+1,n+1]}(u',w').
\]
If $i \geq k$,
then 
\[
u \xrightarrow{k} w\quad \Longleftrightarrow \quad u' \xrightarrow{k} w', \qquad \textrm{ and }\fix_{(k, n]}(u,w) \sqcup \{n+1\} = \fix_{(k,n+1]}(u',w').
\]
\end{lem}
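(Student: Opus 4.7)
Both equivalences will be proved by constructing a bijection between increasing $k$-chains from $u$ to $w$ and increasing $k'$-chains from $u'$ to $w'$, where $k' = k+1$ in the first case and $k' = k$ in the second. The underlying principle is that $n+1$ is forced to remain at position $i+1$ throughout any such chain, after which the bijection is simply ``insert/delete $n+1$'' together with an index shift.

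The crucial intermediate step is a \emph{stationarity} lemma: in any increasing $k'$-chain $u' = u_1' \lessdot_{k'} \cdots \lessdot_{k'} u_s' = w'$, the value $n+1$ sits at position $i+1$ throughout. Since $n+1$ is maximal and $u_1'(i+1) = u_s'(i+1) = n+1$, an inversion calculation shows that while a cover may displace $n+1$ from position $i+1$ to some smaller position, any later cover returning it to position $i+1$ strictly decreases length. Hence once displaced, $n+1$ cannot return, contradicting $w'(i+1) = n+1$. Therefore $n+1$ never moves, and position $i+1$ never appears as an endpoint of any cover swap.

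With stationarity in hand, the bijection is explicit. From an increasing $k$-chain $u = u_1 \lessdot_k \cdots \lessdot_k u_s = w$ with $u_{j+1} = u_j\tau_{a_j, b_j}$, define $u_j'$ by inserting $n+1$ at position $i+1$ in the one-line notation of $u_j$, and let $a_j', b_j'$ be obtained from $a_j, b_j$ by incrementing any index exceeding $i$ by one. Inserting the maximum at a fixed position offsets the inversion count by the constant $n - i$, so $u_{j+1}' = u_j'\tau_{a_j', b_j'}$ is again a cover; the identity $u_j'(a_j') = u_j(a_j)$ transfers the increasing condition; and one checks directly that in the first case $a_j' \leq k+1 < b_j'$ (a $(k+1)$-cover) while in the second case $a_j' \leq k < b_j'$ with $b_j' \neq i+1$ (a $k$-cover). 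The inverse deletes $n+1$ from position $i+1$ and reverses the shift, well-defined by stationarity.

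The $\fix$ identity then follows by tracking common entries through the bijection. In the first case, the new matching entry at position $i+1$ lies in the left block $[a, k+1]$ of the $(k+1)$-Bruhat cut, so the two right blocks correspond and the $\fix$ sets are equal. In the second case it lies in the right block $(k, n+1]$, contributing the single new element $n+1$ to the disjoint union. The main obstacle is the stationarity lemma, whose proof requires an inversion-count computation showing that returning $n+1$ to position $i+1$ is always length-decreasing; once this is in place, the remaining verification is routine index arithmetic.
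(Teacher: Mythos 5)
Your proof is correct, and it fills in exactly the routine verification that the paper's one‑line proof (``Follows from Definition~\ref{D: Inc chain}'') leaves implicit: $n+1$ is pinned at position $i+1$, so inserting or deleting it (with the obvious index shift) gives a weight‑ and $\fix$‑preserving bijection between increasing chains. One small stylistic quibble: in the case $i \le k$, position $i+1$ lies in the left block $[a,k+1]$ of the $(k+1)$‑Bruhat cut, so the maximum value $n+1$ there can never be the smaller endpoint of a cover swap and therefore never moves \emph{at all}; the ``once displaced, cannot return'' framing is only needed in the case $i \ge k$, where $n+1$ sits in the right block and could a priori migrate left.
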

\begin{proof}
Follows from Definition~\ref{D: Inc chain}.
\end{proof}

\begin{lem}
\label{L: LIRD}
If $u \leq_k w$,
then $u(i) \leq w(i)$ 
for $i \leq k$
and $u(j) \geq w(j)$
for $j > k$
\end{lem}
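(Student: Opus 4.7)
The plan is a straightforward induction on the length of a saturated chain from $u$ to $w$ in $\leq_k$, with the cover relation handling all the content of the lemma.

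First I would reduce to the cover case: since $u \leq_k w$, there exists a sequence $u = u_0 \lessdot_k u_1 \lessdot_k \cdots \lessdot_k u_r = w$. If we can show the conclusion for a single cover, then iterating the inequalities $u_s(i) \leq u_{s+1}(i)$ for $i \leq k$ and $u_s(j) \geq u_{s+1}(j)$ for $j > k$ along the chain finishes the argument.

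For the cover step, suppose $u' = u\tau_{a,b}$ with $a \leq k < b$ and $\ell(u') = \ell(u)+1$. The length condition forces $u(a) < u(b)$, since otherwise $\tau_{a,b}$ would remove the inversion $(a,b)$, strictly decreasing length. The transposition $\tau_{a,b}$ affects only positions $a$ and $b$: we have $u'(a) = u(b) > u(a)$ and $u'(b) = u(a) < u(b)$, while $u'(p) = u(p)$ for all other $p$. Thus at every position $i \leq k$ we have $u(i) \leq u'(i)$ (equality unless $i = a$), and at every position $j > k$ we have $u(j) \geq u'(j)$ (equality unless $j = b$), as required.

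The only genuinely subtle point is the implication ``cover relation $+$ $u(a) < u(b)$'', and this is immediate from the definition of length as the number of inversions. There is no real obstacle: the lemma is a direct consequence of unpacking the cover relations in the $k$-Bruhat order.
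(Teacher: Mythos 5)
Your proof is correct and is essentially the same as the paper's: both reduce to the cover relation $u \lessdot_k w$, write $w = u\tau_{a,b}$ with $a \leq k < b$ and $u(a) < u(b)$, and observe that the inequalities hold position by position. You have simply spelled out the final step that the paper leaves implicit with ``Our statement follows.''
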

\begin{proof}
It is enough to show the lemma with
$u \lessdot_k w$.
We have $w = u\tau_{a,b}$ such that
$a \leq k < b$ and $u(a) < u(b)$.
Our statement follows.
\end{proof}

\begin{lem}
\label{L: not fixed point}
Say $u, w \in S_{[a,n]}$ and
$u \xrightarrow{k} w$.
If $i \in u((k, n])$ but $i \notin \fix_{(k, n]}(u,w)$, then there exists $t \in [a,k]$
such that $w(t) \geq i$.
\end{lem}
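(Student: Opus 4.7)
The plan is to track the location of the value $i$ along an increasing $k$-chain from $u$ to $w$. By Remark~\ref{R: distinct right points}, I may take a chain $u=u_1\lessdot_k u_2\lessdot_k\cdots\lessdot_k u_s=w$ with $u_{p+1}=u_p\tau_{a_p,b_p}$ in which the right indices $b_1,\dots,b_{s-1}$ are pairwise distinct. The idea is that the value $i$, initially sitting at position $j\in(k,n]$, can only leave that position via a swap at step $p$ with $b_p=j$ (since $a_p\le k<j$ rules out $a_p=j$); because we assume $w(j)\neq i$, exactly one such step must occur, and distinctness of the $b_p$'s makes it unique.

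At this unique step $p$, position $j=b_p$ has never been touched before, so $u_p(b_p)=u(j)=i$, and the cover condition $u_p(a_p)<u_p(b_p)$ becomes $u_p(a_p)<i$. The swap then sends value $i$ to position $a_p\le k$. From here I would argue that the value at $a_p$ can only grow along the remainder of the chain, so that $w(a_p)\ge i$. The value at $a_p$ changes at a later step $p'>p$ only if $a_{p'}=a_p$, since $b_{p'}>k\ge a_p$. Picking the smallest such $p'$, the value at $a_p$ is still $i$, i.e.~$u_{p'}(a_{p'})=i$; the cover condition at step $p'$ then gives $u_{p'+1}(a_p)=u_{p'}(b_{p'})>i$. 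For any further step $p''>p'$ with $a_{p''}=a_p$, the increasing property $u_{p'}(a_{p'})<u_{p''}(a_{p''})$ forces the current value at $a_p$ to exceed $i$, and the cover condition at step $p''$ replaces it with something strictly larger still; so the value at $a_p$ stays above $i$ for the rest of the chain.

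It remains to check that $a_p$ actually lies in $[a,k]$ rather than merely in $(-\infty,k]$. Lemma~\ref{L: finiteness of inc chain} guarantees that every $u_p\in S_{[a,n]}$ and therefore fixes integers outside $[a,n]$; since $u_p$ and $u_{p+1}$ disagree at $a_p$ and $b_p$, both indices must lie in $[a,n]$. Combined with $a_p\le k$, this yields $t:=a_p\in[a,k]$ with $w(t)\ge i$.

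The main obstacle I anticipate is making the ``the value at $a_p$ never drops back below $i$'' step airtight, which is precisely where the increasing condition $u_1(a_1)<u_2(a_2)<\cdots<u_s(a_s)$ must be used rather than merely the covering relations: it is exactly this increase that prevents a later swap from reintroducing value $i$ (or anything smaller) to position $a_p$.
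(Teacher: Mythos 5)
Your proof is correct, and its core idea---follow the value $i$ along the chain until it is swapped to a position $t\le k$, then argue that the value at $t$ never drops below $i$---matches the paper's approach. But you do substantially more work than necessary, and the concern you flag at the end is a misconception. Once $i$ lands at $t=a_p\le k$ after step $p$, the inequality $w(t)\ge u_{p+1}(t)=i$ follows immediately from Lemma~\ref{L: LIRD} applied to the tail $u_{p+1}\le_k\cdots\le_k u_s=w$. That lemma holds for \emph{all} $\le_k$-chains and uses only the covering relations; the increasing-chain condition $u_1(a_1)<u_2(a_2)<\cdots$ is not needed to rule out the value at $a_p$ dropping back below $i$. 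Likewise, passing to a chain with distinct right indices via Remark~\ref{R: distinct right points} buys you nothing: you only ever need the \emph{first} step in which position $j$ is involved, and that exists for any chain. In short, your first paragraph (value $i$ sits at some $j>k$, must be moved at least once since $w(j)\ne i$, and any swap touching $j$ sends $i$ to some $a_p\le k$) is exactly the paper's argument; the remaining two paragraphs re-derive a special case of Lemma~\ref{L: LIRD} by hand. Your explicit use of Lemma~\ref{L: finiteness of inc chain} to pin $a_p$ inside $[a,k]$ rather than $(-\infty,k]$ is a useful clarification of a step the paper handles tersely.
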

\begin{proof}
Let $u = u_1 \lessdot_k \cdots \lessdot_k u_s = w$ be the increasing $k$-chain 
from $u$ to $w$.
Each $u_j$ is obtained from $u_{j-1}$
by swapping two numbers. 
Since $i \notin \fix_{(k, n]}(u,w)$, 
the value $i$ is swapped in the chain. 
Since $i \in u((k, n])$, 
$i$ will be swapped to position $t$
with $t \in [a,k]$.
Thus, $u_j(t) = i$ for some $j \in [2,n]$.
Then $w(t) \geq u_j(t) = i$ by
Lemma~\ref{L: LIRD}.
\end{proof}

Finally, for $I \subset \Z$, 
we say that $w \in S_\Z$ is \definition{decreasing 
on $I$} if $w(i) > w(i')$
whenever $i < i'$ and $i, i' \in I$.
Then the following fact is proved in~\cite{Yu}.
\begin{lem}\cite{Yu}*{Lemma 3.7}
\label{L: decreasing preserved}
Consider $w \in S_{[a,n]}$
that is decreasing on $(k, n]$.
If $u \xrightarrow{k} w$,
then $u$ is also
decreasing on $(k,n]$.
\end{lem}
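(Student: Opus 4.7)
The plan is to induct on the length $s-1$ of an increasing chain $u = u_1 \lessdot_k u_2 \lessdot_k \cdots \lessdot_k u_s = w$. The base $s=1$ is immediate. For the inductive step, the tail $u_2 \lessdot_k \cdots \lessdot_k u_s$ remains an increasing $k$-chain from $u_2$ to $w$, so by induction $u_2$ is decreasing on $(k,n]$. Thus it suffices to prove the following one-step claim: if $u \lessdot_k v$ in the $k$-Bruhat order and $v$ is decreasing on $(k,n]$, then $u$ is decreasing on $(k,n]$. Notice that the ``increasing'' condition is not actually needed for this reduction, which is why the one-cover claim is the real content.

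For the one-step claim, write $v = u\tau_{a,b}$ with $a \leq k < b$ and $u(a) < u(b)$, and with $\ell(v)=\ell(u)+1$. This last condition is equivalent to saying that no position $c$ with $a < c < b$ satisfies $u(a) < u(c) < u(b)$. Observe that $u$ and $v$ agree on $(k,n]$ except at position $b$, where $u(b)=v(a) > v(b) = u(a)$.

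I would then check $u(k+1) > u(k+2) > \cdots > u(n)$ by splitting into positions away from $b$ and positions adjacent to $b$. For $c, c{+}1 \in (k,n]$ both different from $b$, we have $u(c)=v(c) > v(c{+}1)=u(c{+}1)$ since $v$ is decreasing on $(k,n]$. For $c \in (k, b)$, the decreasing property of $v$ gives $v(c) > v(b) = u(a)$, and then the length condition forces $v(c) > u(b)$; specialized to $c = b-1$ this yields $u(b-1) = v(b-1) > u(b)$. For $c \in (b, n]$, the decreasing property of $v$ gives $v(c) < v(b) = u(a) < u(b)$, so $u(c) = v(c) < u(b)$; specialized to $c = b+1$ this yields $u(b) > u(b+1) = v(b+1)$. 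Assembling these inequalities gives that $u$ is decreasing on $(k,n]$.

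The main subtlety will be handling the boundary cases $b = k+1$ (no $c \in (k,b)$ to check) and $b = n$ (no $c \in (b, n]$ to check), as well as phrasing the length-condition deduction cleanly; I expect the rest to be purely bookkeeping. No appeal to Remark~\ref{R: distinct right points} or to the increasing-chain condition itself is needed, since the argument already works for arbitrary chains in the $k$-Bruhat order.
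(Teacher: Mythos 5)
Your proof is correct. Note that the paper does not prove this lemma itself; it cites Yu's Lemma~3.7, so there is no in-paper argument to compare against. Your reduction to a single Bruhat cover $u \lessdot_k v$ is sound, and your observation that the increasing-chain condition is superfluous here is right: the statement holds for any saturated chain in the $k$-Bruhat order. The cover-level argument is also correct. Writing $v = u\tau_{p,q}$ with $p \le k < q$ (you reuse the letter $a$ for the transposition index, which collides with the interval bound in $S_{[a,n]}$; a rename would help), $u$ and $v$ agree on $(k,n]$ except at $q$, where the value jumps up to $u(q) = v(p) > v(q)$. For $c \in (q,n]$, decrease of $v$ gives $u(c) = v(c) < v(q) < u(q)$; for $c \in (k,q)$, decrease of $v$ gives $u(c) = v(c) > v(q) = u(p)$, and the length condition $\ell(v) = \ell(u)+1$ (no $c$ strictly between $p$ and $q$ has $u(p) < u(c) < u(q)$) then forces $u(c) > u(q)$. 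Combined with the unchanged comparisons away from $q$, and noting that the boundary cases $q = k+1$ and $q = n$ are vacuous, this shows $u$ is decreasing on $(k,n]$. This is the natural argument and is consistent in spirit with Lemma~\ref{L: LIRD}.
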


\subsection*{Generating function of increasing chains
and its symmetry}
Let $\alpha = (\alpha_1, \dots, \alpha_{m+1})$
be a tuple of integers.
For $u, w \in S_{[a,n]}$,
recall \definition{$C(u, w, \alpha)$} denotes the set of
sequences of permutations 
$(u_1, u_2, \dots, u_{m+1})$
such that $u_1 = u$, $u_{m+1} = w$,
and $u_i \xrightarrow{\alpha_i} u_{i+1}$ for each $i \in [m]$.
Each such $(u_1, \dots, u_{m+1})$ is assigned a \definition{weight}
\begin{equation}
\label{EQ: Define wt}
\wt^n_\alpha(u_1, \dots, u_{m+1})(x_1, \dots, x_m;\y)
:= 
\prod_{i\in[m]} 
\;\prod_{j\in \fix_{(\alpha_i, n]}(u_i,u_{i+1})} (x_i - y_j),
\end{equation}
where $\fix_I(u,w):=\{u(t): t\in I,\ u(t)=w(t)\}$ for $I\subset\Z$.
Recall that we define the generating function 
of $C(u,w,\alpha)$ as
$$
\fC^n_{u, w, \alpha}(x_1, \dots, x_{m}; \y)
:= 
\sum_{(u_1, \dots, u_{m+1}) \in C(u, w, \alpha)}
\wt^n_\alpha(u_1, \dots, u_{m+1})(x_1, \dots, x_m; \y).$$

\begin{exa}
We often represent 
an element $(u_1, \dots, u_m) \in C(u,w,\alpha)$
as a table. 
We assume $u_1, \dots, u_m \in S_{[a,n]}$ for some 
$a, n \in S_\Z$.
Then we write the one-line
notations of $u_1, \dots, u_m$ from bottom to top. 
To include the information of 
$\alpha$, 
we put a red bar after
$u_{i}(\alpha_i)$ and $u_{i+1}(\alpha_i)$.
Then $\fix_{(\alpha_i, n]}(u_i, u_{i+1})$ is the set
of numbers that match in the 
rows of $u_i$ and $u_{i+1}$
on the right side of the red bar. 
We circle these numbers.
For instance, the following table
represents $(u_1, u_2, u_3) \in C(u,w, \alpha)$:
\[
\begin{tikzpicture}[scale=0.7]
  \draw (0,0) rectangle (7,3);
  \foreach \x in {1,...,6} \draw (\x,0) -- (\x,3);
  \foreach \y in {1,2} \draw (0,\y) -- (7,\y);

  \node at (0.5,0.5) {-1};
  \node at (1.5,0.5) {0};
  \node at (2.5,0.5) {4};
  \node at (3.5,0.5) {1};
  \node at (4.5,0.5) {5};
  \node at (5.5,0.5) {3};
  \node at (6.5,0.5) {2};

  \node at (0.5,1.5) {3};
  \node at (1.5,1.5) {-1};
  \node at (2.5,1.5) {4};
  \node at (3.5,1.5) {0};
  \node at (4.5,1.5) {5};
  \node at (5.5,1.5) {1};
  \node at (6.5,1.5) {2};

  \node at (0.5,2.5) {3};
  \node at (1.5,2.5) {0};
  \node at (2.5,2.5) {5};
  \node at (3.5,2.5) {-1};
  \node at (4.5,2.5) {4};
  \node at (5.5,2.5) {1};
  \node at (6.5,2.5) {2};

  \draw[red,ultra thick] (1,0) -- (1,2); 
  \draw[red,ultra thick] (3,1) -- (3,3);
  \draw[green,thick] (2.5,1) circle [x radius=0.35, y radius=0.8];
  \draw[green,thick] (4.5,1) circle [x radius=0.35, y radius=0.8];
  \draw[green,thick] (6.5,1) circle [x radius=0.35, y radius=0.8];
  \draw[green,thick] (5.5,2) circle [x radius=0.35, y radius=0.8];
  \draw[green,thick] (6.5,2) circle [x radius=0.35, y radius=0.8];
\end{tikzpicture}
\]
We know $u = u_1 = [-1,0,4,1,5,3,2]$,
$u_2 = [3,-1,4,0,5,1,2]$ 
and $u_3 = [3,0,5,-1,4,1,2]$.
From the table, we can also read
$\alpha = (-1, 1)$ and 
$$
\wt_\alpha^5(u_1, u_2, u_3)(x_1, x_2; \y)
= (x_1 - y_5) (x_1 - y_4) (x_1 - y_2) (x_2 - y_1)(x_2 - y_2). \qedhere
$$
\end{exa}

The generating function $\fC_{u, w, \alpha}^n(x_1, \dots, x_m;\y)$ has a remarkable symmetry 
that will play a major role in \S\ref{S: TBPD}.
For $\sigma \in S_m$ 
and a sequence 
$\alpha = (\alpha_1, \dots, \alpha_m)$,
let $\sigma\alpha$ denote
$(\alpha_{\sigma(1)}, \dots, \alpha_{\sigma(m)})$.

\begin{prop}
\label{P: double symmetry} 
For $u, w \in S_{[a,n]}$ and 
$\alpha=(\alpha_1,\dots,\alpha_m)$,
we have
\begin{equation}
\label{EQ: double symmetry prop}
\fC^n_{u,w,\alpha}(x_1,\dots,x_m;\y)
=
\fC^n_{u,w,\sigma\alpha}(x_{\sigma(1)},\dots,x_{\sigma(m)};\y)
\end{equation}
In particular, when $\sigma=[m,\dots,2,1]$,
\begin{equation}
\label{EQ: double rev 2} 
\fC^n_{u,w,\alpha}(x_1,\dots,x_m;\y)
=
\fC^n_{u,w,\rev(\alpha)}(x_m,\dots,x_1;\y),
\end{equation}
where $\rev(\alpha)$ denotes sequence reversal.
\end{prop}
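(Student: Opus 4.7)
The plan is to reduce to a two-step exchange and then invoke Samuel's Pieri identity~\cite{Sam}. Since $S_m$ is generated by adjacent transpositions $s_1,\dots,s_{m-1}$, it suffices to establish \eqref{EQ: double symmetry prop} when $\sigma = s_k$ for some $k \in [m-1]$. In that case, both sides of \eqref{EQ: double symmetry prop} share the same weighted contributions from the chain segments $u_1 \xrightarrow{\alpha_1} \cdots \xrightarrow{\alpha_{k-1}} u_k$ and $u_{k+2} \xrightarrow{\alpha_{k+2}} \cdots \xrightarrow{\alpha_m} u_{m+1}$, and the only discrepancy lies in the two-step transition through the middle permutation. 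After fixing the outer data, the proposition reduces to showing that for every $u, w \in S_{[a,n]}$, every $p, q \in \Z$, and indeterminates $x, x'$, the two-step sum
\[
T_{p,q}(x,x';u,w) := \sum_{v :\, u \xrightarrow{p} v \xrightarrow{q} w}\ \prod_{j\in \fix_{(p,n]}(u,v)}(x-y_j)\ \prod_{j\in \fix_{(q,n]}(v,w)}(x'-y_j)
\]
satisfies $T_{p,q}(x,x';u,w) = T_{q,p}(x',x;u,w)$.

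The key ingredient is Samuel's Pieri identity, which expresses a single-step generating sum
\[
\sum_{v :\, u \xrightarrow{k} v}\ \prod_{j\in \fix_{(k,n]}(u,v)}(x-y_j)\ \fS_v(\x;\y)
\]
as the image of $\fS_u(\x;\y)$ under an explicit operator $P_k(x)$ built from multiplication by a polynomial in $x$ with coefficients in $\y$. For disjoint variables $x, x'$, the operators $P_p(x)$ and $P_q(x')$ commute, since they act by multiplication by polynomials in disjoint variable sets. Consequently, the polynomial combination $P_q(x')\,P_p(x)\,\fS_u(\x;\y)$ expands in the Schubert basis in two ways: applying Samuel's identity first with parameter $p$ and then with parameter $q$ produces $\sum_w T_{p,q}(x,x';u,w)\,\fS_w(\x;\y)$, while expanding in the opposite order produces $\sum_w T_{q,p}(x',x;u,w)\,\fS_w(\x;\y)$. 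Comparing coefficients of $\fS_w$ yields the desired two-step symmetry.

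The general case of $\sigma \in S_m$ then follows by induction on $\ell(\sigma)$ via its reduced decomposition into adjacent transpositions. The main obstacle will be precisely formulating Samuel's Pieri identity in the chain-theoretic language of Sottile~\cite{So96} and reconciling weight conventions, particularly that the fixed-point windows $(\alpha_i, n]$ in \eqref{EQ: Define wt} depend on the ambient interval $[a,n]$; this requires verifying the identity holds uniformly in $n$, perhaps by passing through the standardization map $\std_{[a,n]}$ of \eqref{EQ: define std} and~\eqref{EQ: k inc correspondence}. Once that dictionary is set up, commutativity of the Pieri operators provides the symmetry essentially for free.
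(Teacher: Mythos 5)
Your approach is essentially the one in the paper: reduce to the adjacent-transposition (two-step) case, apply Samuel's Pieri identity, and exploit the commutativity of multiplication by the two Pieri polynomials $(x_1-z)\cdots(x_{k_1}-z)$ and $(x_1-z')\cdots(x_{k_2}-z')$, then compare coefficients of $\fS_w$ in the Schubert basis. That is exactly how the paper proves the two-step identity (Corollary~\ref{C: chain-sym-two-steps}).

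However, you misstate Samuel's identity in a way that hides a genuine nontrivial step. You write it as if it directly gives $\sum_{v:\,u\xrightarrow{k}v}\prod_{j\in\fix_{(k,n]}(u,v)}(x-y_j)\,\fS_v(\x;\y) = P_k(x)\fS_u(\x;\y)$, but Theorem~\ref{T: Sam} is phrased in terms of the \emph{other} chain relation $u\xrightarrow[k]{}v$ (distinct left indices) and a product over $\fix_{[k]}$, not $\fix_{(k,n]}$. These two formalisms are not the same, and translating the Pieri rule into the $\fC^n$-generating-function language requires the involution $v\mapsto v\,w_{0,n}$, which swaps the roles of left and right swap-indices and turns $\fix_{[k]}$ into $\fix_{(n-k,n]}$ (this is Lemma~\ref{L: equivalence of two increasing defs}). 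Your suggestion that the reconciliation is handled ``perhaps by passing through the standardization map $\std_{[a,n]}$'' misses the point: standardization is only used to go from $S_{[a,n]}$ to $S_n$; the bridge from $\xrightarrow{k}$ to $\xrightarrow[k]{}$ is the $w_{0,n}$-conjugation. So the skeleton of your argument is correct and matches the paper, but the part you flag as ``the main obstacle'' is precisely the step you have not supplied, and your guess about how to resolve it points in the wrong direction.
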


\begin{exa}
\label{E: compute 2143 with chains}
Let $u = [2,1,4,3], w = [4,3,2,1]$ and $\alpha = (1,2,3)$. There are $3$ chains in $C(u, w, \alpha)$, represented by the following three tables:
    $$
    \begin{tikzpicture}[scale=0.7]
      \draw (0,0) rectangle (4,4);
      \foreach \x in {1,...,4} \draw (\x,0) -- (\x,4);
      \foreach \y in {1,...,4} \draw (0,\y) -- (4,\y);
      \node at (0.5,0.5) {2};
      \node at (1.5,0.5) {1};
      \node at (2.5,0.5) {4};
      \node at (3.5,0.5) {3};
      \node at (0.5,1.5) {4};
      \node at (1.5,1.5) {1};
      \node at (2.5,1.5) {3};
      \node at (3.5,1.5) {2};
      \node at (0.5,2.5) {4};
      \node at (1.5,2.5) {3};
      \node at (2.5,2.5) {1};
      \node at (3.5,2.5) {2};
      \node at (0.5,3.5) {4};
      \node at (1.5,3.5) {3};
      \node at (2.5,3.5) {2};
      \node at (3.5,3.5) {1};
      \draw[red,ultra thick] (2,1) -- (2,3); 
      \draw[red,ultra thick] (3,2) -- (3,4);
      \draw[red,ultra thick] (1,0) -- (1,2);
      \draw[green,thick] (1.5,1) circle [x radius=0.35, y radius=0.8];
      \draw[green,thick] (3.5,2) circle [x radius=0.35, y radius=0.8];
    \end{tikzpicture}
\quad \quad
    \begin{tikzpicture}[scale=0.7]
      \draw (0,0) rectangle (4,4);
      \foreach \x in {1,...,4} \draw (\x,0) -- (\x,4);
      \foreach \y in {1,...,4} \draw (0,\y) -- (4,\y);
      \node at (0.5,0.5) {2};
      \node at (1.5,0.5) {1};
      \node at (2.5,0.5) {4};
      \node at (3.5,0.5) {3};
      \node at (0.5,1.5) {4};
      \node at (1.5,1.5) {1};
      \node at (2.5,1.5) {2};
      \node at (3.5,1.5) {3};
      \node at (0.5,2.5) {4};
      \node at (1.5,2.5) {3};
      \node at (2.5,2.5) {1};
      \node at (3.5,2.5) {2};
      \node at (0.5,3.5) {4};
      \node at (1.5,3.5) {3};
      \node at (2.5,3.5) {2};
      \node at (3.5,3.5) {1};
      \draw[red,ultra thick] (2,1) -- (2,3); 
      \draw[red,ultra thick] (3,2) -- (3,4);
      \draw[red,ultra thick] (1,0) -- (1,2);
      \draw[green,thick] (1.5,1) circle [x radius=0.35, y radius=0.8];
      \draw[green,thick] (3.5,1) circle [x radius=0.35, y radius=0.8];
    \end{tikzpicture}
    \quad \quad
    \begin{tikzpicture}[scale=0.7]
      \draw (0,0) rectangle (4,4);
      \foreach \x in {1,...,4} \draw (\x,0) -- (\x,4);
      \foreach \y in {1,...,4} \draw (0,\y) -- (4,\y);
      \node at (0.5,0.5) {2};
      \node at (1.5,0.5) {1};
      \node at (2.5,0.5) {4};
      \node at (3.5,0.5) {3};
      \node at (0.5,1.5) {4};
      \node at (1.5,1.5) {1};
      \node at (2.5,1.5) {3};
      \node at (3.5,1.5) {2};
      \node at (0.5,2.5) {4};
      \node at (1.5,2.5) {3};
      \node at (2.5,2.5) {2};
      \node at (3.5,2.5) {1};
      \node at (0.5,3.5) {4};
      \node at (1.5,3.5) {3};
      \node at (2.5,3.5) {2};
      \node at (3.5,3.5) {1};
      \draw[red,ultra thick] (2,1) -- (2,3); 
      \draw[red,ultra thick] (3,2) -- (3,4);
      \draw[red,ultra thick] (1,0) -- (1,2);
      \draw[green,thick] (1.5,1) circle [x radius=0.35, y radius=0.8];
      \draw[green,thick] (3.5,3) circle [x radius=0.35, y radius=0.8];
    \end{tikzpicture}
    $$
    Thus, the generating function is
    \begin{align}
    \label{EQ: Double sym Example 1}
        \fC^4_{u, w, (1,2,3)}(x_1, x_2, x_3; \y) = (x_1 - y_1)(x_2 - y_2) + (x_1 - y_1)(x_1 - y_3) + (x_1 - y_1)(x_3 - y_1).
    \end{align}

    Notice that $\rev(\alpha) = (3,2,1)$. There are also $3$ chains in $C(u, w, (3,2,1))$, represented by the following three tables:
    $$
    \begin{tikzpicture}[scale=0.7]
      \draw (0,0) rectangle (4,4);
      \foreach \x in {1,...,4} \draw (\x,0) -- (\x,4);
      \foreach \y in {1,...,4} \draw (0,\y) -- (4,\y);
      \node at (0.5,0.5) {2};
      \node at (1.5,0.5) {1};
      \node at (2.5,0.5) {4};
      \node at (3.5,0.5) {3};
      \node at (0.5,1.5) {2};
      \node at (1.5,1.5) {1};
      \node at (2.5,1.5) {4};
      \node at (3.5,1.5) {3};
      \node at (0.5,2.5) {2};
      \node at (1.5,2.5) {4};
      \node at (2.5,2.5) {3};
      \node at (3.5,2.5) {1};
      \node at (0.5,3.5) {4};
      \node at (1.5,3.5) {3};
      \node at (2.5,3.5) {2};
      \node at (3.5,3.5) {1};
      \draw[red,ultra thick] (3,0) -- (3,2); 
      \draw[red,ultra thick] (2,1) -- (2,3);
      \draw[red,ultra thick] (1,2) -- (1,4);
      \draw[green,thick] (3.5,1) circle [x radius=0.35, y radius=0.8];
      \draw[green,thick] (3.5,3) circle [x radius=0.35, y radius=0.8];
    \end{tikzpicture}
\quad \quad
    \begin{tikzpicture}[scale=0.7]
      \draw (0,0) rectangle (4,4);
      \foreach \x in {1,...,4} \draw (\x,0) -- (\x,4);
      \foreach \y in {1,...,4} \draw (0,\y) -- (4,\y);
      \node at (0.5,0.5) {2};
      \node at (1.5,0.5) {1};
      \node at (2.5,0.5) {4};
      \node at (3.5,0.5) {3};
      \node at (0.5,1.5) {3};
      \node at (1.5,1.5) {1};
      \node at (2.5,1.5) {4};
      \node at (3.5,1.5) {2};
      \node at (0.5,2.5) {3};
      \node at (1.5,2.5) {4};
      \node at (2.5,2.5) {2};
      \node at (3.5,2.5) {1};
      \node at (0.5,3.5) {4};
      \node at (1.5,3.5) {3};
      \node at (2.5,3.5) {2};
      \node at (3.5,3.5) {1};
      \draw[red,ultra thick] (3,0) -- (3,2); 
      \draw[red,ultra thick] (2,1) -- (2,3);
      \draw[red,ultra thick] (1,2) -- (1,4);
      \draw[green,thick] (2.5,3) circle [x radius=0.35, y radius=0.8];
      \draw[green,thick] (3.5,3) circle [x radius=0.35, y radius=0.8];
    \end{tikzpicture}
    \quad \quad
    \begin{tikzpicture}[scale=0.7]
      \draw (0,0) rectangle (4,4);
      \foreach \x in {1,...,4} \draw (\x,0) -- (\x,4);
      \foreach \y in {1,...,4} \draw (0,\y) -- (4,\y);
      \node at (0.5,0.5) {2};
      \node at (1.5,0.5) {1};
      \node at (2.5,0.5) {4};
      \node at (3.5,0.5) {3};
      \node at (0.5,1.5) {2};
      \node at (1.5,1.5) {3};
      \node at (2.5,1.5) {4};
      \node at (3.5,1.5) {1};
      \node at (0.5,2.5) {2};
      \node at (1.5,2.5) {4};
      \node at (2.5,2.5) {3};
      \node at (3.5,2.5) {1};
      \node at (0.5,3.5) {4};
      \node at (1.5,3.5) {3};
      \node at (2.5,3.5) {2};
      \node at (3.5,3.5) {1};
      \draw[red,ultra thick] (3,0) -- (3,2); 
      \draw[red,ultra thick] (2,1) -- (2,3);
      \draw[red,ultra thick] (1,2) -- (1,4);
      \draw[green,thick] (3.5,2) circle [x radius=0.35, y radius=0.8];
      \draw[green,thick] (3.5,3) circle [x radius=0.35, y radius=0.8];
    \end{tikzpicture}
    $$
    Therefore,the generating function is
    \begin{align}
    \label{EQ: Double sym Example 2}
        \fC^4_{u,w,\rev(\alpha)}(x_3, x_2, x_1; \y) = (x_1 - y_1)(x_3 - y_3) + (x_1 - y_1)(x_1 - y_2) + (x_1 - y_1)(x_2 - y_1).
    \end{align}
    One can compare~\eqref{EQ: Double sym Example 1} 
    and~\eqref{EQ: Double sym Example 2} to
    confirm that $\fC^4_{u,w,\alpha}(x_1, x_2, x_3; \y) = \fC^4_{u,w,\rev(\alpha)}(x_3, x_2, x_1; \y)$,
    which is expected by Proposition~\ref{P: double symmetry}.
    Notice that even though the generating functions agree,
    there does not exist a weight-preserving bijection
    between $C(u,w,(1,2,3))$ and $C(u,w,(3,2,1))$.
\end{exa}

This symmetry can be implied by the Pieri identity
of Samuel~\cite{Sam}.
We recall the following analogue of the notion 
$u\xrightarrow{k}w$ that was introduced by Sottile~\cite{So96}
and played an important role in~\cite{Sam}.
\begin{defn}
\label{D: E chains}
For permutations $u, w \in S_n$,
write\footnote{This is denoted as $u \xrightarrow{k} w$
in~\cite{Sam}.} $u\xrightarrow[k]{}w$
if there is a chain 
\[
u = u_1 \lessdot_k u_2 \lessdot_k \cdots \lessdot_k u_s = w
\]
such that if we write $u_{i+1} = u_i \tau_{a_i,b_i}$ for each $i$, then the indices
$a_1,\dots,a_{s-1}$ are pairwise distinct.
\end{defn}

This notion is related to the relation $u \xrightarrow{k} w$ as follows.
Recall that $w_{0,n} = [n,n-1,\dots,2,1]$.

\begin{lem}
\label{L: equivalence of two increasing defs}   
For $u,w \in S_n$, we have
\[
u \xrightarrow{k} w
\quad\Longleftrightarrow\quad
w\, w_{0,n} \xrightarrow{\,n-k\,} u\, w_{0,n}.
\]
\end{lem}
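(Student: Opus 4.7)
The plan is to exhibit a single ``conjugation-and-reversal'' operation that turns a chain from $u$ up to $w$ in the $k$-Bruhat order into a chain from $w w_{0,n}$ up to $u w_{0,n}$ in the $(n-k)$-Bruhat order, and then match the two increasingness conditions using Remark~\ref{R: distinct right points}.

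First I would set up the cover-by-cover correspondence. The key identity is $\tau_{a,b}\, w_{0,n} = w_{0,n}\, \tau_{n+1-b,\, n+1-a}$, together with $\ell(v w_{0,n}) = \binom{n}{2} - \ell(v)$ for every $v\in S_n$. Writing $a' = n+1-b$ and $b' = n+1-a$, one checks that $a\le k<b$ if and only if $a'\le n-k<b'$, and that the length drops by one. Hence the map $v\mapsto v w_{0,n}$ is an order-reversing bijection that takes $k$-Bruhat covers $u\lessdot_k w$ to $(n-k)$-Bruhat covers $w w_{0,n}\lessdot_{n-k} u w_{0,n}$.

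Next, given an increasing $k$-chain $u = u_1\lessdot_k\cdots\lessdot_k u_s = w$ with $u_{i+1}=u_i\tau_{a_i,b_i}$, I would invoke Remark~\ref{R: distinct right points}: because $u\xrightarrow{k}w$, we may replace this chain with one in which the upper indices $b_1,\dots,b_{s-1}$ are pairwise distinct. Apply the conjugation above and reverse: set $v_j := u_{s+1-j}\, w_{0,n}$, so that
\[
w w_{0,n} = v_1 \lessdot_{n-k} v_2 \lessdot_{n-k} \cdots \lessdot_{n-k} v_s = u w_{0,n},
\]
with $v_{j+1} = v_j\, \tau_{a'_j,b'_j}$ where $a'_j = n+1 - b_{s-j}$ and $b'_j = n+1 - a_{s-j}$. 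Since the $b_i$'s are distinct, so are the $a'_j$'s; this is exactly the condition defining $w w_{0,n} \xrightarrow[n-k]{} u w_{0,n}$ in Definition~\ref{D: E chains}.

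For the converse, I would run the same map in reverse: starting from a chain witnessing $w w_{0,n} \xrightarrow[n-k]{} u w_{0,n}$ (with distinct lower indices $a'_j$), the inverse transformation yields a $k$-chain from $u$ to $w$ whose upper indices $b_i = n+1-a'_{s-i}$ are distinct, and Remark~\ref{R: distinct right points} then upgrades this to an honest increasing $k$-chain, i.e.\ $u\xrightarrow{k}w$. The main subtlety, and the step to state most carefully, is that the natural bijection on chains does \emph{not} directly intertwine ``increasing'' with ``increasing''; what it does intertwine is ``distinct upper indices'' (the reformulation of $\xrightarrow{k}$ from Remark~\ref{R: distinct right points}) with ``distinct lower indices'' (the definition of $\xrightarrow[n-k]{}$). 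Once this translation is in hand, the rest is bookkeeping on the transposition identity.
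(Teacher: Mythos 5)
Your proposal is correct and uses the same core idea as the paper's proof: conjugate each cover by $w_{0,n}$, reverse the chain, and match the two chain conditions via Remark~\ref{R: distinct right points} and Definition~\ref{D: E chains}. You state the bookkeeping more carefully than the paper does --- you correctly observe that the conjugation sends the distinct-upper-index condition characterizing $\xrightarrow{k}$ to the distinct-lower-index condition defining $\xrightarrow[n-k]{}$, which is precisely the version of the equivalence invoked in the proof of Proposition~\ref{P: double symmetry}, whereas the paper's lemma statement and proof are a bit loose about which of the two arrows appears on the right-hand side.
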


\begin{proof}
Suppose $u \xrightarrow{k} w$.
Let
\[
u = u_1 \lessdot_k u_2 \lessdot_k \cdots \lessdot_k u_s = w
\]
be a chain satisfying Definition~\ref{D: E chains}.
Write $u_{i+1} = u_i \tau_{a_i,b_i}$ for each $i \in [s-1]$,
so that the indices $a_1,\dots,a_{s-1}$ are distinct and
$a_i \le k < b_i$.
Then we obtain a chain from $w w_{0,n}$ to $u w_{0,n}$:
\[
u_s w_{0,n} \lessdot_{n-k} \cdots \lessdot_{n-k} u_1 w_{0,n},
\]
where
\[
u_i w_{0,n}
= (u_{i+1} w_{0,n}) \tau_{\,n+1-b_i,\;n+1-a_i}.
\]
Since
\[
n+1-b_i \le n-k < n+1-a_i,
\]
and the indices $n+1-a_1,\dots,n+1-a_{s-1}$ are distinct,
Remark~\ref{R: distinct right points} implies
\[
w w_{0,n} \xrightarrow{\,n-k\,} u w_{0,n}.
\]

The converse direction is proved by the same argument applied in reverse.
\end{proof}

Now we introduce the Pieri rule of Samuel. 

\begin{thm}\cite{Sam}*{Theorem~7.1}
\label{T: Sam}
For $u \in S_n$,
$$
\fS_u(\x; \y) \times (x_1 - z)(x_2 - z) \cdots (x_k - z)
= \sum_{u \xrightarrow[k]{}w} \fS_w(\x; \y) \prod_{j \in \fix_{[k]}(u,w)} (y_{u(j)} - z).
$$   
\end{thm}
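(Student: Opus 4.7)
This is cited from Samuel~\cite{Sam}, so the authors will quote it as a black box. If one wished to prove it independently, the natural plan is induction on $k$ with the double Monk formula as the base case.

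For $k=1$, a chain $u \xrightarrow[1]{} w$ is either the trivial chain $w = u$ (contributing $(y_{u(1)} - z)\fS_u(\x;\y)$, since $\fix_{[1]}(u,u) = \{1\}$) or a single Bruhat cover $w = u\tau_{1,b}$ with $u(1) < u(b)$ and $\ell(u\tau_{1,b}) = \ell(u)+1$ (contributing $\fS_{u\tau_{1,b}}(\x;\y)$, since $\fix_{[1]}(u,u\tau_{1,b}) = \varnothing$). Adding these up recovers precisely the classical double Monk formula of Molev--Sagan, which is known.

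For the inductive step, I would apply the induction hypothesis to $\fS_u \cdot \prod_{i=1}^{k-1}(x_i - z)$ and then multiply by $(x_k - z)$. The factor $(x_k - z)$ rewrites as $(\fS_{s_k}(\x;\y) - \fS_{s_{k-1}}(\x;\y)) + (y_k - z)$, so its action can be expanded via iterated double Monk. This produces terms indexed by pairs (chain $u \xrightarrow[k-1]{} v$, Bruhat cover $v \lessdot_k v\tau_{a,b}$), together with trivial $k$-step extensions contributing a $(y_{v(k)} - z)$ factor. The hard part is the combinatorial bookkeeping: each such pair must be matched bijectively with an increasing $k$-chain $u \xrightarrow[k]{} w$ in the sense of Definition~\ref{D: E chains}, with the $(y - z)$ weights lining up, and the distinct-left-index condition of that definition must emerge naturally from the Monk expansion while ``bad'' extensions telescope or cancel. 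This cancellation is essentially the content of Sottile's original argument in the non-equivariant setting, lifted to $\y$-coefficients.

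An alternative route bypasses the explicit bijection via equivariant localization. Both sides are polynomials in $z$ of degree $k$, so after substituting $\x \mapsto v$ for each $v \in S_n$, it suffices to verify a polynomial identity in $z$ at $k+1$ well-chosen values (for instance $z = y_{v(i)}$ for $i \in [k]$, plus one more), where the Billey formula evaluates $\fS_u(v;\y)$ and $\fS_w(v;\y)$ explicitly in terms of reduced words of $v$. The obstacle here shifts from bijective combinatorics to controlling the resulting sums of Billey monomials, but each specialization reduces to known Chevalley-type identities in $H^*_T(G/B)$.
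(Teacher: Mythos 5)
The paper cites Theorem~\ref{T: Sam} from Samuel without reproving it, exactly as you observe, so there is no in-paper proof to compare against. Your $k=1$ base case via double Monk is correct and your two proposed routes are plausible, but as you acknowledge they leave the core cancellation/bookkeeping unfinished; treating the result as a black box, as both you and the authors do, is the appropriate stance here.
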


\begin{cor}
\label{C: chain-sym-two-steps}
For $u,w \in S_n$ and $k_1,k_2 \in [n-1]$, we have
\begin{align*}
& \sum_{u \xrightarrow[k_1]{} v \xrightarrow[k_2]{} w}
\left(\prod_{j \in \fix_{[k_1]}(u,v)} (y_{j} - z)\right)
\left(\prod_{j \in \fix_{[k_2]}(v,w)} (y_{j} - z')\right) \\
= \;&
\sum_{u \xrightarrow[k_2]{} v \xrightarrow[k_1]{} w}
\left(\prod_{j \in \fix_{[k_2]}(u,v)} (y_{j} - z')\right)
\left(\prod_{j \in \fix_{[k_1]}(v,w)} (y_{j} - z)\right).
\end{align*}
\end{cor}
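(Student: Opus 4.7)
The plan is to apply Samuel's Pieri rule (Theorem~\ref{T: Sam}) twice in two different associative orders and then compare coefficients of Schubert polynomials. Consider the product
\[
P := \fS_u(\x;\y) \cdot \prod_{i=1}^{k_1}(x_i - z) \cdot \prod_{i=1}^{k_2}(x_i - z').
\]
Since polynomial multiplication is commutative, $P$ can be expanded in two different ways, and the two resulting Schubert expansions must agree coefficient by coefficient.

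First I would expand $P$ as $\bigl(\fS_u \cdot \prod_{i=1}^{k_1}(x_i-z)\bigr) \cdot \prod_{i=1}^{k_2}(x_i-z')$. Applying Theorem~\ref{T: Sam} to the bracketed factor with $k=k_1$ produces a sum indexed by chains $u \xrightarrow[k_1]{} v$; applying Theorem~\ref{T: Sam} once more to each resulting $\fS_v \cdot \prod_{i=1}^{k_2}(x_i-z')$ with $k=k_2$ yields
\[
P \;=\; \sum_{u \xrightarrow[k_1]{} v \xrightarrow[k_2]{} w} \fS_w \cdot \left(\prod_{j \in \fix_{[k_1]}(u,v)}(y_j - z)\right)\!\left(\prod_{j \in \fix_{[k_2]}(v,w)}(y_j - z')\right),
\]
after reindexing each Pieri product from positions in $[k_i]$ to values via the identification $\fix_{[k]}(u,v) = \{u(j) : j \in [k],\ u(j)=v(j)\}$. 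Regrouping $P$ instead as $\bigl(\fS_u \cdot \prod_{i=1}^{k_2}(x_i-z')\bigr) \cdot \prod_{i=1}^{k_1}(x_i-z)$ and repeating the argument with the roles of $(k_1,z)$ and $(k_2,z')$ interchanged produces precisely the right-hand side of the corollary as the expansion of the same $P$.

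Since Schubert polynomials are linearly independent over $\Q[\y,z,z']$, extracting the coefficient of a fixed $\fS_w$ from the two expansions of $P$ immediately yields the claimed identity. I do not expect a serious obstacle: the statement is essentially the commutativity of multiplication by the two Pieri factors, promoted to a combinatorial identity. The only bookkeeping is to verify that the Pieri factor not used in a given step passes through the other Schubert expansion unchanged (it does, since it is scalar relative to the Schubert index being updated) and that the notation $\fix_I(\cdot,\cdot)$ is interpreted consistently between the two Pieri applications.
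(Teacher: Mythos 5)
Your proposal is correct and takes essentially the same approach as the paper: apply Samuel's Pieri rule (Theorem~\ref{T: Sam}) twice to $\fS_u(\x;\y)\prod_{i\le k_1}(x_i-z)\prod_{i\le k_2}(x_i-z')$ in the two possible orders, use that the two products of $\x$-polynomials are literally equal, and compare coefficients of $\fS_w(\x;\y)$, which form a basis. The only bookkeeping you flag (that $\fix_I(u,v)$ is a set of values, so the Pieri factor is $y_j-z$ for $j$ a value) is exactly what is needed, and the paper handles it implicitly.
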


\begin{proof}
Recall that the double Schubert polynomials $\fS_w(\x;\y)$ form a basis of $\ZZ[\y][\x]$ as polynomials in the $\x$-variables.
By Theorem~\ref{T: Sam}, the left-hand side is the coefficient of $\fS_w(\x;\y)$ in
\[
\fS_u(\x;\y)\,
\bigl((x_1-z)\cdots(x_{k_1}-z)\bigr)\,
\bigl((x_1-z')\cdots(x_{k_2}-z')\bigr),
\]
while the right-hand side is the coefficient of $\fS_w(\x;\y)$ in
\[
\fS_u(\x;\y)\,
\bigl((x_1-z')\cdots(x_{k_2}-z')\bigr)\,
\bigl((x_1-z)\cdots(x_{k_1}-z)\bigr).
\]
These two products are equal, so the coefficients of $\fS_w(\x;\y)$ agree.
\end{proof}

\begin{proof}[Proof of Proposition~\ref{P: double symmetry}]
To prove~\eqref{EQ: double symmetry prop}, it suffices to assume $\alpha=(k_1,k_2)$.
Moreover, by~\eqref{EQ: k inc correspondence}, we may assume $u,w\in S_n$ and $k_1,k_2\in[n-1]$.
After a substitution of variables, our goal is to show
\[
\fC^n_{u,w,(k_1,k_2)}(z,z';\y)
=
\fC^n_{u,w,(k_2,k_1)}(z',z;\y).
\]
By Lemma~\ref{L: equivalence of two increasing defs}, this is equivalent to
\begin{align*}
& \sum_{w w_{0,n} \xrightarrow[n-k_2]{} v \xrightarrow[n-k_1]{} u w_{0,n}}
\left(\prod_{j \in \fix_{[n-k_2]}(w w_{0,n},v)} (z' - y_j)\right)
\left(\prod_{j \in \fix_{[n-k_1]}(v,u w_{0,n})} (z - y_j)\right) \\
= \;&
\sum_{w w_{0,n} \xrightarrow[n-k_1]{} v \xrightarrow[n-k_2]{} u w_{0,n}}
\left(\prod_{j \in \fix_{[n-k_1]}(w w_{0,n},v)} (z - y_j)\right)
\left(\prod_{j \in \fix_{[n-k_2]}(v,u w_{0,n})} (z' - y_j)\right).
\end{align*}
This follows from Corollary~\ref{C: chain-sym-two-steps}.
\end{proof}

\subsection*{Bumpless pipedreams and increasing chains}

We have introduced two combinatorial models:
BPDs and increasing chains.
Yu~\cite{Yu} connected these
two models by giving a bijection
\[
\chain_n: \BPD_n(u) \longrightarrow C\bigl(u, w_{0,n}, (n-1, \dots, 2, 1)\bigr).
\]
We now describe this bijection in our setup. 

\begin{defn}
\label{D: cross section}   
Let $D$ be a BPD with a valid labeling.
Take $r \in \Z$
and let $I_r \subset \Z$ consist of 
all $c$ such that $(r,c)$ or $(r+1, c)$
is a tile in $D$.
The \definition{cross section} of $D$
below row $r$ (or above row $r + 1$)
with respect to this labeling 
is a map $\varphi: I_r \rightarrow \Z \cup \{\varnothing\}$.
If there is a pipe with label $p$ connecting
to the bottom edge of $(r,c)$ or the top
edge of $(r+1, c)$, $\varphi(c) = p$;
otherwise, $\varphi(c) = \varnothing$.

For a cross section $\varphi$, let $\varphi^{-1}$ denote its ``inverse'', and let $\img(\varphi) = \{\varphi(c)\,:\,\varphi(c) \in \Z, \, c \in I_r\}$. Then $\varphi^{-1}$ would be a map from $\img(\varphi)$ to $\Z$.
We define $\varphi^{-1}(p) = c$ if $\varphi(c) = p$.
In words, 
$\varphi^{-1}(p)$ is the column index of the pipe
$p$ at the bottom of row $r$.
\end{defn}

\begin{rem}
\label{R: Top row bottom row cs}
Take $D \in \BPD(\A, \gamma)$ where $R(\A) = [a,b]$ and consider the labeling of $D$
with respect to $\gamma$.
Then the cross section of $D$ above row $a$ 
(resp. below row $b$) must be
$\gamma_N$ (resp. $\gamma_S$). 
In particular, they are the same
for all $D \in \BPD(\A, \gamma)$.
\end{rem}

\begin{exa}
Let $D$ be the BPD in 
Example~\ref{Ex: boundary conditions} with labeling as
in the first square. 
Let $\varphi_r$ be the 
cross section of $D$ above row
$r$ for $r \in [7]$. 
We list $\varphi_r(c)$ in the 
following table where 
rows correspond to $r$
and columns correspond to $c$.
\begin{center}
\begin{tabular}{c|cccccc}
  $r \backslash c$ & $1$ & $2$ & $3$ & $4$ & $5$ & $6$ \\ \hline
  $1$ & $\varnothing$ & -3 & $\varnothing$ & 1 & $\varnothing$ & 0 \\
  $2$ & $\varnothing$ & -3 & $\varnothing$ & 1 & $\varnothing$ & $\varnothing$ \\
  $3$ & $\varnothing$ & -3 & 1 & $\varnothing$ & $\varnothing$ & -2 \\
  $4$ & -3 & $\varnothing$ & 1 & $\varnothing$ & 4 & -2 \\
  $5$ & -3 & $\varnothing$ & $\varnothing$ & 7 & 4 & -2 \\
  $6$ & -3 & $\varnothing$ & $\varnothing$ & $\varnothing$ & 4 & -2 \\
  $7$ & -3 & $\varnothing$ & $\varnothing$ & 5 & 4 & -2 
\end{tabular}
\end{center}
One can also read $\varphi_r^{-1}$.
For instance, 
$\varphi_3^{-1}$ is defined
on \{-3, 1, 2\} with
$\varphi_3^{-1}(-3) = 2$, $\varphi_3^{-1}(1) = 3$
and $\varphi_3^{-1}(-2) = 6$.

\end{exa}

\begin{defn}\cite{Yu}*{Definition~3.4}
\label{D: BPD chain}
Take $D \in \BPD_n(u)$ with the labeling with respect to $\gamma^{w,n}$ (see Definition~\ref{D: BPD for Schubert}).
For each $r \in [n+1]$,
let $\varphi_r$ be the cross section of $D$
above row $r$,
so $\img(\varphi_r) = [r-1]$.
Define $u_r \in S_n$ as a permutation 
that agrees with $\varphi_r^{-1}$ on $[r-1]$
and decreases on $[r, n]$.
Finally, define \definition{$\chain_n(D)$} as $(u_n, \dots, u_1)$. 
\end{defn}

\begin{thm}\cite{Yu}*{Theorem~3.6}
\label{T: BPD chain}
The map $\chain_n$ 
is a bijection from $\BPD_n(u)$ to $C\bigl(u, w_{0,n}, (n-1, \dots, 2, 1)\bigr)$.
If $\chain_n(D) = (u_{n}, \dots, u_1)$,
then $\wt(D) = \wt_{\alpha}^n(\x_{n-1}, \dots, x_1; \y)$.
\end{thm}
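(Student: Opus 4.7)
The plan is to analyze the BPD row by row, tracking the evolution of the cross sections $\varphi_r$. The key preliminary observation is that in a Schubert BPD for $\BPD_n(u)$, pipes enter from rows on the right and exit from columns at the bottom, so no $\jtile$ tile can appear; hence each pipe is $\Gamma$-shaped, going left from its entry row until it turns down at a unique $\rtile$ and then descends, possibly crossing horizontal pipes at $\ptile$'s. Consequently, the pipes labeled $1, \dots, r-1$ are all purely vertical above row $r$, so $\varphi_r^{-1}\colon [r-1] \hookrightarrow [n]$ is a well-defined injection, and the decreasing condition on $[r,n]$ determines $u_r \in S_n$ uniquely; in particular $u_1 = w_{0,n}$ at the top, and at $r = n+1$ the recipe recovers the Schubert permutation $u$.

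The main task is then to verify the chain property. Scanning row $r$ from right to left, pipe $r$ enters from the right, passes through some $\htile$ and $\ptile$ tiles (the vertical pipes at those $\ptile$'s keep their columns), and turns down at a unique $\rtile$ at some column $c_0 = u_{r+1}(r)$. Thus $\varphi_{r+1}$ equals $\varphi_r$ plus the new vertical pipe $r$ in column $c_0$; in permutation language, $u_r$ and $u_{r+1}$ agree on $[r-1]$, and on $[r,n]$ they share the same multiset of values, but $u_{r+1}$ pins $c_0$ at position $r$ while $u_r$ lists the tail in decreasing order. Letting $v_1 < v_2 < \dots < v_k$ be the values in the tail of $u_r$ exceeding $c_0$, the successive swaps of $c_0$ with $v_1, v_2, \dots, v_k$ (at positions $j_1, \dots, j_k > r$) yield an increasing $r$-chain from $u_{r+1}$ to $u_r$: each swap is an $r$-Bruhat cover since $a = r$ and $b = j_i > r$, and the smaller value swapped increases from $c_0$ through $v_1, \dots, v_{k-1}$. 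Hence $\chain_n(D) \in C(u, w_{0,n}, (n-1, \dots, 1))$.

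An inverse is built row by row: given $u_r$ and $u_{r+1}$, the entry $c_0 = u_{r+1}(r)$ forces the $\rtile$ of row $r$, the columns $>c_0$ occupied by pipes $< r$ in $\varphi_r$ force the $\ptile$ positions, and consistency forces the remaining tiles. For the weight, a $\btile$ at $(r,c)$ requires column $c$ to carry no vertical pipe above or below row $r$ and no horizontal pipe in row $r$; the latter forces $c < c_0$, and the former forces $c \notin \varphi_r^{-1}([r-1])$, conditions which together describe the set $\fix_{(r,n]}(u_{r+1}, u_r)$. Assembling over rows,
\[
\wt(D) \;=\; \prod_{r=1}^{n-1} \prod_{c \in \fix_{(r,n]}(u_{r+1}, u_r)} (x_r - y_c),
\]
which, after reindexing the chain as $(u'_1, \dots, u'_n) = (u_n, \dots, u_1)$ with $\alpha_i = n-i$, matches $\wt^n_\alpha(x_{n-1}, \dots, x_1; \y)$. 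The main obstacle will be checking that the inverse construction always produces a BPD that is reduced and satisfies the prescribed boundary condition, and keeping straight the reversal between the BPD's top-to-bottom scanning and the chain's indexing convention.
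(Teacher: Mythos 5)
There is a genuine gap. The preliminary claim that $\jtile$ never appears in $\BPD_n(u)$, so that every pipe is $\Gamma$-shaped, is false. While the Rothe diagram has this property, general bumpless pipedreams do not; in the BPD of Example~\ref{E: BPD to chain}, pipe~$2$ occupies cell $(3,5)$ with a $\jtile$, and pipe~$1$ has $\jtile$s at $(2,4)$ and $(4,3)$. This error propagates. You assert that $\varphi_{r+1}$ equals $\varphi_r$ together with a new entry for pipe~$r$ at column $c_0$, hence that $u_r$ and $u_{r+1}$ agree on $[r-1]$ --- but a pipe with label $<r$ can have a horizontal segment within row~$r$, entering from the top at one column and exiting at the bottom at a strictly smaller one. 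In the same example, $u_3=[3,5,6,4,2,1]$ and $u_4=[3,1,6,5,4,2]$ disagree at position $2\in[r-1]$ with $r=3$, because pipe~$2$ slides from column~$5$ to column~$1$ inside row~$3$. Consequently the step $u_{r+1}\xrightarrow{r}u_r$ is not just a sequence of swaps of a single value $c_0$ held at position $r$; several ``left'' positions change, and the increasing chain genuinely uses multiple transpositions with distinct left endpoints. Your inverse construction and the identification of the $\btile$ positions with $\fix_{(r,n]}(u_{r+1},u_r)$ rest on the same false structural premise.

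What does survive is the row-by-row framework. The correct replacement is to treat row~$r$ as a one-row BPD whose boundary data are $\varphi_r$ on top, $\varphi_{r+1}$ on the bottom, and pipe~$r$ entering from the right; the equivalence between such one-row BPDs and increasing $r$-chains, together with the match between $\btile$ positions and $\fix_{(r,n]}(u_{r+1},u_r)$, is precisely the content of Lemma~\ref{L: one row BPD, right entry}. That lemma is the combinatorial heart of Yu's proof as used here; your proposal reaches the right conclusion for each row but derives it from an assumption that is not true in general.
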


\begin{exa}\cite{Yu}*{Example 3.5}
\label{E: BPD to chain}
Take $n = 6$ and $D \in \BPD_n(w)$
with $w = [2,1,6,5,3,4]$.
We depict $D$ below on the left
together with the boundary condition $\gamma^{w,n}$.
We represent $\chain_n(D) \in C(u, w_{0, n}, (5,4,3,2,1))$ on 
the right.
\[
\begin{tikzpicture}[x=2em,y=2em,thick,rounded corners, color = blue]
\draw[step=1,gray,thin] (0,0) grid (6,6);
\draw[color=black, thick, sharp corners] (0,0) rectangle (6,6);
\draw(0.5, 0)--(0.5,3.5)--(4.5,3.5)--(4.5,4.5)--(6,4.5);
\draw(1.5, 0)--(1.5,2.5)--(2.5,2.5)--(2.5,4.5)--(3.5,4.5)--(3.5,5.5)--(6,5.5);
\draw(2.5, 0)--(2.5,1.5)--(6,1.5);
\draw(3.5, 0)--(3.5,0.5)--(6,0.5);
\draw(4.5, 0)--(4.5,2.5)--(6,2.5);
\draw(5.5, 0)--(5.5,3.5)--(6,3.5);
\end{tikzpicture}
\raisebox{6em}{$\qquad \xrightarrow{\chain_n}\qquad$}
\begin{tikzpicture}
[x=2em,y=2em,thick,sharp corners, color = black]
  \draw (0,0) rectangle (6,6);
  \foreach \x in {1,...,5} \draw (\x,0) -- (\x,6);
  \foreach \y in {1,...,5} \draw (0,\y) -- (6,\y);

  \node at (0.5,0.5) {2};
  \node at (1.5,0.5) {1};
  \node at (2.5,0.5) {6};
  \node at (3.5,0.5) {5};
  \node at (4.5,0.5) {3};
  \node at (5.5,0.5) {4};

  \node at (0.5,1.5) {2};
  \node at (1.5,1.5) {1};
  \node at (2.5,1.5) {6};
  \node at (3.5,1.5) {5};
  \node at (4.5,1.5) {4};
  \node at (5.5,1.5) {3};

  \node at (0.5,2.5) {3};
  \node at (1.5,2.5) {1};
  \node at (2.5,2.5) {6};
  \node at (3.5,2.5) {5};
  \node at (4.5,2.5) {4};
  \node at (5.5,2.5) {2};

  \node at (0.5,3.5) {3};
  \node at (1.5,3.5) {5};
  \node at (2.5,3.5) {6};
  \node at (3.5,3.5) {4};
  \node at (4.5,3.5) {2};
  \node at (5.5,3.5) {1};

  \node at (0.5,4.5) {4};
  \node at (1.5,4.5) {6};
  \node at (2.5,4.5) {5};
  \node at (3.5,4.5) {3};
  \node at (4.5,4.5) {2};
  \node at (5.5,4.5) {1};

  \node at (0.5,5.5) {6};
  \node at (1.5,5.5) {5};
  \node at (2.5,5.5) {4};
  \node at (3.5,5.5) {3};
  \node at (4.5,5.5) {2};
  \node at (5.5,5.5) {1};

\draw[red,very thick] (5,0) -- (5,2);   
\draw[red,very thick] (4,1) -- (4,3);
\draw[red,very thick] (3,2) -- (3,4);
\draw[red,very thick] (2,3) -- (2,5);
\draw[red,very thick] (1,4) -- (1,6);
\draw[green,thick] (4.5,2) circle [x radius=0.35, y radius=0.8];
\draw[green,thick] (4.5,4) circle [x radius=0.35, y radius=0.8];
\draw[green,thick] (5.5,4) circle [x radius=0.35, y radius=0.8];
\draw[green,thick] (3.5,5) circle [x radius=0.35, y radius=0.8];
\draw[green,thick] (4.5,5) circle [x radius=0.35, y radius=0.8];
\draw[green,thick] (5.5,5) circle [x radius=0.35, y radius=0.8];
\end{tikzpicture}
\]
Notice that
$$
\wt(D) = \wt_{\alpha}^n(\x_{n-1}, \dots, x_1; \y) = (x_4 - y_4)(x_2 - y_2)(x_2 - y_1)(x_1 - y_3)(x_1 - y_2)(x_1 - y_1).
$$
We explicitly show the computation 
of $u_4$ in Definition~\ref{D: BPD chain}.
First, we compute $\varphi_4$.
The cross section of $D$ above
row $4$ is:
$$\varphi_4(1) = 2, \quad \varphi_4(2) = \varnothing, \quad \varphi_4(3) = 1, \quad
\varphi_4(4) = \varnothing, \quad
\varphi_4(5) = \varnothing, \quad
\varphi_4(6) = 3.$$
Then $\varphi_4^{-1}$ is defined
on $[3]$.
We have $\varphi_4^{-1}(1) = 3$, 
$\varphi_4^{-1}(2) = 1$ and
$\varphi_4^{-1}(3) = 6$.
Since Definition~\ref{D: BPD chain}
requires that $u_4$ agrees with $\varphi_4^{-1}$ in $[3]$,
we have $u_4(1) = 3$, $u_4(2) = 1$
and $u_4(3) = 6$.
Finally, since $u_4 \in S_6$
and is decreasing on $[4, 6]$,
we have $u_4 = [3,1,6,5,4,2]$.
\end{exa}

We extract a key lemma from the proof of this theorem in~\cite{Yu}.
It relates BPD of one row with increasing chains. 

\begin{lem}
\label{L: one row BPD, right entry}
Let $\gamma$ be a boundary condition
of the one-row trapezoid $[1] \times [a, n]$.
Assume 
\[
\img(\gamma_N) = [a, k-1],\quad
\gamma_E(1) = k, \quad
\img(\gamma_S) = [a, k], \quad
\gamma_W(1)= \varnothing.
\]
Construct $w \in S_{a,n}$ (resp. $u \in S_{[a,n]}$)
as the permutation that
agrees with $\gamma_N^{-1}$ (resp. $\gamma_S^{-1}$)
on $[a, k-1]$ (resp. $[a, k]$) and decreasing 
on $(k-1, n]$ (resp. $(k, n]$).
Then $u \xrightarrow{k} w$ if and only if
there exists a BPD of $[1] \times [a, n]$
satisfying $\gamma$.
In this case, there is exactly one such BPD $D$,
and
\begin{equation}
\label{EQ: fix points and BPD}
\{c: D(1, c) = \btile\} = \fix_{(k, n]}(u, w).    
\end{equation}
\end{lem}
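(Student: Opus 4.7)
The plan is to prove the lemma via a structural analysis of single-row BPDs, organized into four steps: (1) describe the form of any BPD satisfying $\gamma$ and verify its uniqueness, (2) characterize existence by combinatorial conditions on $u$ and $w$, (3) show these conditions are equivalent to $u \xrightarrow{k} w$, and (4) read off the blank-tile formula.

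For the structural analysis, each pipe $i \in [a, k-1]$ enters at column $w(i)$ from the top and exits at column $u(i)$ at the bottom, while pipe $k$ enters at column $n$ from the right and exits at column $u(k)$. Walking through the six tile types shows that each pipe is either straight-down ($u(i) = w(i)$, using $\vtile$ or a crossing $\ptile$) or L-shaped on the interval $[u(i), w(i)]$ (or $[u(k), n]$ for pipe $k$), traveling leftward via a $\jtile$-then-$\htile$/$\ptile$-then-$\rtile$ pattern. No left-exit forces $u(i) \le w(i)$; overlapping L-intervals would require a cell to carry conflicting turn and horizontal pipes at once, and so the L-intervals must be pairwise disjoint; and each $\ptile$ crossing of a straight-down pipe $j$ inside the interior of an L-pipe $i$ forces $j < i$ by the valid-labeling condition (vertical label smaller than horizontal). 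These conditions determine each tile uniquely, so the BPD is unique when it exists.

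To show these conditions are equivalent to $u \xrightarrow{k} w$, I argue both directions. For ``$u \xrightarrow{k} w$ implies the conditions'', I induct on chain length: Lemma~\ref{L: LIRD} gives $u(i) \le w(i)$ directly, and a single cover $u \lessdot_k u'$ via $\tau_{p,q}$ modifies at most one L-interval (shrinking pipe $p$'s if $p < k$, or pipe $k$'s if $p = k$), so disjointness and the label condition propagate from $(u', w)$ back to $(u, w)$. For the converse direction, I build the chain by processing L-pipes in an appropriate order and performing incremental swaps: for each active pipe $i$, swap position $i$ with the position of the smallest value in $(u(i), w(i)]$ that currently occupies a position $> k$. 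Disjointness ensures this target position is strictly greater than $k$; the label condition ensures we can navigate around straight-down crossings by skipping columns whose values sit at positions $\le k$; decreasingness of $u$ on $(k, n]$ is preserved throughout; and the sequence of smaller swapped values is strictly increasing by construction.

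The blank-tile formula then follows: a column $c$ is blank iff it lies in no L-interval and is not a straight-down column, and using disjointness together with $\{\text{straight-down columns}\} = \{u(i) : i \in [a, k-1], u(i) = w(i)\} \subseteq u([a, k])$, one checks that blank columns coincide with $\fix_{(k, n]}(u, w) = \{u(t) : t > k,\ u(t) = w(t)\}$. I expect the most delicate part to be the converse direction of the chain equivalence: verifying that every prescribed swap is a valid $\lessdot_k$ cover (target position strictly greater than $k$ and no blocking intermediate value), that decreasingness on $(k, n]$ persists, and that the overall chain is increasing requires careful bookkeeping that hinges on processing L-pipes in the correct order and using the crossing-label condition to skip past straight-down columns.
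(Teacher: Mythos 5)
The paper's own proof of this lemma is essentially a pointer to Yu's earlier work: it reduces to the case $a = 1$ via the standardization map $\std_{[a,n]}$ and~\eqref{EQ: k inc correspondence}, then cites Lemma~3.10, Definition~3.15, Lemma~3.17, and Lemma~3.18 of~\cite{Yu} for the two directions of the equivalence, the uniqueness, and the identity~\eqref{EQ: fix points and BPD}. You instead propose a from-scratch structural argument. The two routes are genuinely different in spirit, and your structural analysis of a one-row BPD is sound: since $\gamma_W(1) = \varnothing$, every pipe is either straight-down ($u(i) = w(i)$) or L-shaped with horizontal run over an interval $[u(i), w(i)]$ (with $[u(k), n]$ for the pipe entering on the right), horizontal runs of distinct L-pipes cannot share a cell so the L-intervals are pairwise disjoint, a $\ptile$ inside an L-run forces the vertical (straight-down) pipe's label to be smaller, and uniqueness and~\eqref{EQ: fix points and BPD} then read off directly. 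These are indeed the conditions Yu's lemmas work with.

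The gap is in the equivalence of these conditions with $u \xrightarrow{k} w$, which is exactly the content you would otherwise be importing from~\cite{Yu}. In the forward direction you say a cover $u \lessdot_k u' = u\tau_{p,q}$ ``shrinks'' pipe $p$'s L-interval, but the induction runs from $(u',w)$ (shorter chain, closer to $w$) back to $(u,w)$, and in that direction the L-interval \emph{widens} from $[u'(p),w(p)]$ to $[u(p),w(p)]$. A widened interval can absorb previously-disjoint L-intervals or pick up new $\ptile$ crossings with straight-down pipes whose labels exceed $p$, so neither disjointness nor the label condition ``propagates'' for free. Crucially, the argument as written does not use the distinguishing feature $u_1(a_1) < u_2(a_2) < \cdots$ of an \emph{increasing} $k$-chain; without it the claim is false, since arbitrary saturated $k$-Bruhat chains from $u$ to $w$ do not enforce the label condition. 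For the converse direction you give a candidate greedy swap rule but, as you acknowledge, leave open that each swap is a genuine $\lessdot_k$ cover (no blocking intermediate value), that $u$ stays decreasing on $(k,n]$, and that the sequence of left-hand swap values is strictly increasing. These are exactly the points where Yu's Lemma~3.17/3.18 do real work. So while your framing is a reasonable blueprint for a self-contained proof, the central equivalence is currently asserted rather than proven, and the forward induction in particular needs to be reorganized around the increasing-chain condition before it can close.
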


In words, the assumptions on $\gamma$
mean that a labeled BPD satisfying $\gamma$
has $k$ pipes with labels 
from $a$ to $k$,
all exiting from the bottom of row~$1$.
Pipe $k+1$ enters from the right,
while the other pipes enter from the top.

\begin{proof}
Set $a = 1$.
Suppose such a BPD exists. 
See~\cite{Yu}*{Lemma~3.10} for an explicit
construction of the increasing $k$-chain 
from $u$ to $w$.
Conversely, see Definition~3.15, Lemma~3.17, 
and Lemma~3.18 of~\cite{Yu} for an explicit 
construction of the BPD $D$ satisfying $\gamma$.
Uniqueness is immediate. 
Finally,~\eqref{EQ: fix points and BPD} 
follows from this construction, and the theorem for general $a$ follows 
from~\eqref{EQ: k inc correspondence}.
\end{proof}
\begin{exa}
    Consider the one row BPD 
    of $[1] \times [-1, 6]$
    where pipe $-1, \dots, 2$ 
    enter from the top and pipe $3$ 
    enters from row $1$.
    Let $\gamma$ be the boundary
    condition depicted below.
    $$
    \begin{tikzpicture}[x=2em,y=2em,thick,rounded corners, color = blue]
        \draw[step=1,gray,thin] (0,0) grid (8,1);
        \draw[color=black, thick, sharp corners] (0,0) rectangle (8,1);
        \draw(0.5, 0)--(0.5,0.5)--(3.5,0.5)--(3.5,1);
        \draw(1.5, 0)--(1.5,1);
        \draw(2.5, 0)--(2.5,1);
        \draw(6.5, 0)--(6.5,1);
        \draw(5.5, 0)--(5.5,0.5)--(8,0.5);
        \node[color=black] at (-0.8,0.5) {$\circled{1}$};
        \node[color=black] at (0.5,2) {$\circled{-1}$};
        \node[color=black] at (1.5,2) {$\circled{0}$};
        \node[color=black] at (2.5,2) {$\circled{1}$};
        \node[color=black] at (3.5,2) {$\circled{2}$};
        \node[color=black] at (4.5,2) {$\circled{3}$};
        \node[color=black] at (5.5,2) {$\circled{4}$};
        \node[color=black] at (6.5,2) {$\circled{5}$};
        \node[color=black] at (7.5,2) {$\circled{6}$};
        \node[color=black] at (1.5,1.3) {$1$};
        \node[color=black] at (2.5,1.3) {$-1$};
        \node[color=black] at (3.5,1.3) {$2$};
        \node[color=black] at (6.5,1.3) {$0$};
        \node[color=black] at (0.5,-0.3) {$2$};
        \node[color=black] at (1.5,-0.3) {$1$};
        \node[color=black] at (2.5,-0.3) {$-1$};
        \node[color=black] at (5.5,-0.3) {$3$};
        \node[color=black] at (6.5,-0.3) {$0$};
        \node[color=black] at (8.3,0.5) {$3$};
    \end{tikzpicture}
    $$
    We construct $u = [1,5,0,-1,4,6,3,2]$ and $w = [1,5,0,2,6,4,3,-1]$ from $\gamma$
    as in Lemma~\ref{L: one row BPD, right entry}.
    Since there is a BPD satisfying $\gamma$,
    we have $u \xrightarrow{3} w$:
    The increasing chain from $u$ to $w$ 
    is $(u,[1,5,0,2,4,6,3,-1] ,w)$.
    We may depict $u \xrightarrow{3} w$ 
    as the table:
    $$
    \begin{tikzpicture}[scale=0.7]
      \draw (0,0) rectangle (8,2);
      \foreach \x in {1,...,8} \draw (\x,0) -- (\x,2);
      \foreach \y in {1,2} \draw (0,\y) -- (8,\y);
    
      \node at (0.5,0.5) {1};
      \node at (1.5,0.5) {5};
      \node at (2.5,0.5) {0};
      \node at (3.5,0.5) {-1};
      \node at (4.5,0.5) {4};
      \node at (5.5,0.5) {6};
      \node at (6.5,0.5) {3};
      \node at (7.5,0.5) {2};
    
      \node at (0.5,1.5) {1};
      \node at (1.5,1.5) {5};
      \node at (2.5,1.5) {0};
      \node at (3.5,1.5) {2};
      \node at (4.5,1.5) {6};
      \node at (5.5,1.5) {4};
      \node at (6.5,1.5) {3};
      \node at (7.5,1.5) {-1};
    
      \draw[red,ultra thick] (5,0) -- (5,2);
      \draw[green,thick] (6.5,1) circle [x radius=0.35, y radius=0.8];
    \end{tikzpicture}
    $$
    The table has fixed points $\fix_{(3, 6]}(u,w) = \{3\}$, corresponding to the single $\btile$ in column $3$ of the BPD. Conversely, given such $u\xrightarrow{k}w$, we can reconstruct the unique BPD satisfying $\gamma$.
\end{exa}

We now extend this lemma
to the case where there is no pipe
entering from the right edge of the row. 

\begin{lem}
\label{L: one row BPD, no right entry}
Let $\gamma$ be a boundary condition
of the one-row trapezoid $[1] \times [a, n]$.
Assume 
\[
\img(\gamma_N) = \img(\gamma_S) = [a, k],\quad
\gamma_E(1) = \gamma_W(1)=\varnothing.
\]
Construct $w \in S_{[a,n]}$ (resp. $u \in S_{[a,n]}$)
as the permutation that
agrees with $\gamma_N^{-1}$ (resp. $\gamma_S^{-1}$)
on $[a, k]$ and decreasing on $(k, n]$.
Then $u \xrightarrow{k} w$ if and only if
there exists a BPD of $\{1\} \times [a, n]$
satisfying $\gamma$.
In this case, there is exactly one such BPD $D$,
and~\eqref{EQ: fix points and BPD} holds.
\end{lem}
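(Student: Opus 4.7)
My plan is to reduce to the previous Lemma~\ref{L: one row BPD, right entry} by attaching a single trivial pipe on the right end of the strip. Concretely, I would define a boundary condition $\gamma'$ on the longer strip $[1]\times[a,n+1]$ by setting $\gamma'_N=\gamma_N$ and $\gamma'_S=\gamma_S$ on columns $[a,n]$, together with $\gamma'_N(n+1)=\varnothing$, $\gamma'_S(n+1)=k+1$, $\gamma'_E(1)=k+1$, and $\gamma'_W(1)=\varnothing$. Then $\gamma'$ satisfies the hypotheses of Lemma~\ref{L: one row BPD, right entry} with parameter $k'=k+1$.

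First I would establish a bijection between $\BPD([1]\times[a,n],\gamma)$ and $\BPD([1]\times[a,n+1],\gamma')$ obtained by extending a BPD $D$ with the single tile $D'(1,n+1)=\rtile$. Consistency in the forward direction uses $\gamma_E(1)=\varnothing$ to ensure that the right edge of $(1,n)$ in $D$ carries no pipe. For the reverse direction, the key claim is that $(1,n+1)$ must be $\rtile$ in any BPD satisfying $\gamma'$: pipe $k+1$ enters the right edge of $(1,n+1)$ and must exit the bottom of that same column, since BPD pipes travel only from right or top to left or bottom and so cannot leave a column and later return to it. Among the six tile types, only $\rtile$ realizes this path.

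Next I would identify the permutations $u',w'$ produced by Lemma~\ref{L: one row BPD, right entry} from $\gamma'$. A short computation using $(\gamma'_S)^{-1}(k+1)=n+1$ and the decreasing-tail convention shows that the one-line notation of $u'$ is obtained from that of $u$ by inserting $n+1$ immediately after position $k$; the same is true for $w'$, since $n+1$ is the largest unused value and therefore leads the decreasing tail. I would then invoke Lemma~\ref{L: Insert large number} with $i=k$ (both $i\leq k$ and $i\geq k$ hold), yielding $u\xrightarrow{k}w \Leftrightarrow u'\xrightarrow{k+1}w'$. Chaining this with the bijection above and Lemma~\ref{L: one row BPD, right entry} applied to $\gamma'$ proves both the existence claim and the uniqueness of $D$.

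For equation~\eqref{EQ: fix points and BPD}, the fact that $D'(1,n+1)=\rtile\neq\btile$ identifies the blank tiles of $D$ with those of $D'$ restricted to columns $[a,n]$. Because $u'$ and $w'$ on $(k+1,n+1]$ are just the shifted tails of $u$ and $w$ on $(k,n]$, one has $\fix_{(k+1,n+1]}(u',w')=\fix_{(k,n]}(u,w)$, and combined with Lemma~\ref{L: one row BPD, right entry} this yields the desired equality. The only mildly delicate step is verifying that $(1,n+1)=\rtile$ is forced, which rests on the local tile inventory together with the no-backtracking property of BPD pipes; once this is settled, the remainder is essentially bookkeeping.
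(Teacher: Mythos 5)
Your proof is correct and follows essentially the same route as the paper: extend the strip by one column, append an $\rtile$ at $(1,n+1)$, invoke Lemma~\ref{L: one row BPD, right entry} on the extended boundary condition $\gamma'$, and transfer the chain condition and fixed-point set back via Lemma~\ref{L: Insert large number}. You supply a bit more detail than the paper does (e.g., the argument that the reverse direction forces $(1,n+1)$ to be $\rtile$, and the explicit check that $n+1$ heads the decreasing tail of both $u'$ and $w'$), but these are the same underlying steps.
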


\begin{proof}
Define $\gamma'$ as a boundary condition on
$[1] \times [a, n+1]$ by
\[
\gamma'_N(i) = \gamma_N(i), \qquad 
\gamma'_S(i) = \gamma_S(i) \quad \text{for } i \in [n],
\]
and
\[
\gamma'_S(n+1) = \gamma'_E(1) = k+1, \quad 
\gamma'_N(n+1) = \gamma'_W(1) = \varnothing.
\]
Then there is a bijection from 
$\BPD([1] \times [a, n], \gamma)$ 
to $\BPD([1] \times [a, n+1], \gamma')$
obtained by appending a $\rtile$ at the cell $(1,n+1)$.
By Lemma~\ref{L: one row BPD, right entry}, 
$\BPD([1] \times [a, n+1], \gamma')$ is nonempty
if and only if $u' \xrightarrow{k+1} w'$, where 
\[
u' = [u(1), \dots, u(k), n+1, u(k+1), \dots, u(n)],
\]
and $w'$ is obtained similarly from $w$.
By Lemma~\ref{L: Insert large number},
$u' \xrightarrow{k+1} w'$ 
if and only if $u \xrightarrow{k} w$.
In this case, the unique BPD in 
$\BPD([1] \times [a, n], \gamma)$ 
has $\btile$ in the same positions 
as the unique BPD in 
$\BPD([1] \times [a, n+1], \gamma')$.
Also, 
by Lemma~\ref{L: Insert large number},
$\fix_{(k,n]}(u,w) = \fix_{(k+1, n+1]}(u',w')$.
Hence the result follows from 
Lemma~\ref{L: one row BPD, right entry}.
\end{proof}

\begin{exa}
    Consider the one row BPD 
    of $[1] \times [-1, 6]$
    where pipe $-1, \dots, 2$ 
    enter from the top and no pipe 
    enters from row $1$.
    Let $\gamma$ be the boundary
    condition depicted below.
    $$
    \begin{tikzpicture}[x=2em,y=2em,thick,rounded corners, color = blue]
        \draw[step=1,gray,thin] (0,0) grid (8,1);
        \draw[color=black, thick, sharp corners] (0,0) rectangle (8,1);
        \draw(0.5, 0)--(0.5,0.5)--(3.5,0.5)--(3.5,1);
        \draw(1.5, 0)--(1.5,1);
        \draw(2.5, 0)--(2.5,1);
        \draw(5.5, 0)--(5.5,0.5)--(6.5,0.5)--(6.5,1);
        \node[color=black] at (-0.8,0.5) {$\circled{1}$};
        \node[color=black] at (0.5,2) {$\circled{-1}$};
        \node[color=black] at (1.5,2) {$\circled{0}$};
        \node[color=black] at (2.5,2) {$\circled{1}$};
        \node[color=black] at (3.5,2) {$\circled{2}$};
        \node[color=black] at (4.5,2) {$\circled{3}$};
        \node[color=black] at (5.5,2) {$\circled{4}$};
        \node[color=black] at (6.5,2) {$\circled{5}$};
        \node[color=black] at (7.5,2) {$\circled{6}$};
        \node[color=black] at (1.5,1.3) {$1$};
        \node[color=black] at (2.5,1.3) {$-1$};
        \node[color=black] at (3.5,1.3) {$2$};
        \node[color=black] at (6.5,1.3) {$0$};
        \node[color=black] at (0.5,-0.3) {$2$};
        \node[color=black] at (1.5,-0.3) {$1$};
        \node[color=black] at (2.5,-0.3) {$-1$};
        \node[color=black] at (5.5,-0.3) {$0$};
    \end{tikzpicture}
    $$
    We construct $u = [1,4,0,-1,6,5,3,2]$ and $w = [1,5,0,2,6,4,3,-1]$ from $\gamma$
    as in Lemma~\ref{L: one row BPD, no right entry}.
    Since there is a BPD satisfying $\gamma$,
    we have $u \xrightarrow{2} w$:
    The increasing chain from $u$ to $w$ 
    is $$([1,4,0,-1,6,5,3,2],[1,4,0,2,6,5,3,-1] ,[1,5,0,2,6,4,3,-1]).$$
    We may depict $u \xrightarrow{2} w$ 
    as the table:
    $$
    \begin{tikzpicture}[scale=0.7]
      \draw (0,0) rectangle (8,2);
      \foreach \x in {1,...,8} \draw (\x,0) -- (\x,2);
      \foreach \y in {1,2} \draw (0,\y) -- (8,\y);
    
      \node at (0.5,0.5) {1};
      \node at (1.5,0.5) {4};
      \node at (2.5,0.5) {0};
      \node at (3.5,0.5) {-1};
      \node at (4.5,0.5) {6};
      \node at (5.5,0.5) {5};
      \node at (6.5,0.5) {3};
      \node at (7.5,0.5) {2};
    
      \node at (0.5,1.5) {1};
      \node at (1.5,1.5) {5};
      \node at (2.5,1.5) {0};
      \node at (3.5,1.5) {2};
      \node at (4.5,1.5) {6};
      \node at (5.5,1.5) {4};
      \node at (6.5,1.5) {3};
      \node at (7.5,1.5) {-1};
    
      \draw[red,ultra thick] (4,0) -- (4,2);
      \draw[green,thick] (6.5,1) circle [x radius=0.35, y radius=0.8];
      \draw[green,thick] (4.5,1) circle [x radius=0.35, y radius=0.8];
    \end{tikzpicture}
    $$

    The table has fixed points $\fix_{(3, 6]}(u,w) = \{3,6\}$, corresponding to the two $\btile$ in columns $3$ and $6$ of the BPD. Conversely, given such $u\xrightarrow{k}w$, we can reconstruct the unique BPD satisfying $\gamma$.
\end{exa}

\section{Slicing BPDs}
\label{S: slice}
As outlined in the introduction, 
we reduce our main problem (Problem~\ref{Pb: main}) to Problem~\ref{Pb: lower},
and then to Problem~\ref{Pb: TBPD}.
These two reductions are achieved
by slicing a BPD of $\bBPD(w)$:
first under row $0$ and then along the 
main diagonal. 
Our process is summarized by Figure~\ref{F: cuts}. 
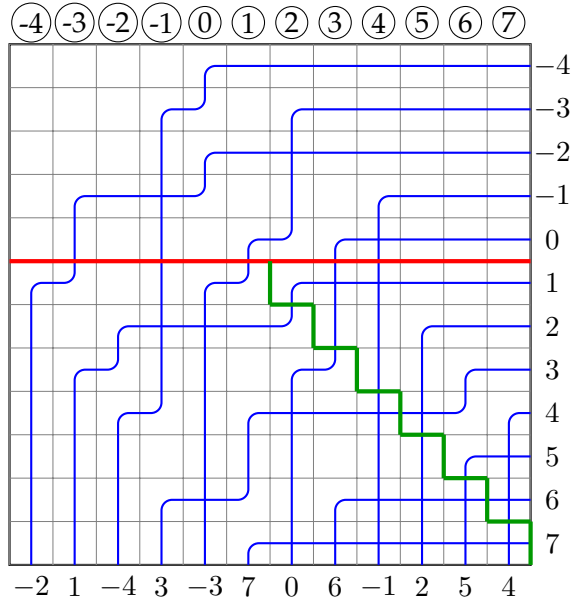
\begin{figure}
    \centering
    \begin{tikzpicture}[x=1.5em,y=1.5em,thick,rounded corners,color = blue]
    \draw[color=black, thick, sharp corners] (0,0) rectangle (12,12);
    \draw[step=1,gray,ultra thin] (0,0) grid (12,12);
    \draw[color=blue, thick] (0.5,0)--(0.5, 6.5)--(1.5, 6.5)--(1.5, 8.5)--(4.5,8.5)--(4.5,9.5)--(12,9.5);
    \draw[blue,thick] (1.5,0) -- (1.5,4.5) -- (2.5,4.5) -- (2.5,5.5) -- (6.5,5.5) -- (6.5,6.5)--(12,6.5);
    \draw[blue,thick] (2.5,0) -- (2.5,3.5) -- (3.5,3.5) -- (3.5,10.5)--(4.5,10.5)--(4.5,11.5)--(12,11.5);
    \draw[blue,thick] (3.5,0) -- (3.5,1.5) -- (5.5,1.5) -- (5.5,3.5) -- (10.5,3.5)--(10.5,4.5)--(12,4.5);
    \draw[blue,thick] (4.5,0) -- (4.5,6.5) -- (5.5,6.5) -- (5.5,7.5)--(6.5,7.5)--(6.5,10.5)--(12,10.5);
    \draw[blue,thick] (5.5,0) -- (5.5,0.5) -- (12,0.5);
    \draw[blue,thick] (6.5,0) -- (6.5,4.5) -- (7.5,4.5) -- (7.5,7.5)--(12,7.5);
    \draw[blue,thick] (7.5,0) -- (7.5,1.5) -- (12,1.5);
    \draw[blue,thick] (8.5,0) -- (8.5,8.5)--(12,8.5);
    \draw[blue,thick] (9.5,0) -- (9.5,5.5)--(12,5.5);
    \draw[blue,thick] (10.5,0) -- (10.5,2.5)--(12,2.5);
    \draw[blue,thick] (11.5,0) -- (11.5,3.5)--(12,3.5);
    \draw[red,ultra thick] (0,7) -- (12,7);
    \draw[darkgreen,ultra thick] (6,7) -- (6,6);
    \draw[darkgreen,ultra thick] (7,6) -- (7,5);
    \draw[darkgreen,ultra thick] (8,5) -- (8,4);
    \draw[darkgreen,ultra thick] (9,4) -- (9,3);
    \draw[darkgreen,ultra thick] (10,3) -- (10,2);
    \draw[darkgreen,ultra thick] (11,2) -- (11,1);
    \draw[darkgreen,ultra thick] (12,1) -- (12,0);
    \draw[darkgreen,ultra thick] (6,6) -- (7,6);
    \draw[darkgreen,ultra thick] (7,5) -- (8,5);
    \draw[darkgreen,ultra thick] (8,4) -- (9,4);
    \draw[darkgreen,ultra thick] (9,3) -- (10,3);
    \draw[darkgreen,ultra thick] (10,2) -- (11,2);
    \draw[darkgreen,ultra thick] (11,1) -- (12,1);
    \node[color=black] at (0.5,12.5) {$\circled{-4}$};
    \node[color=black] at (1.5,12.5) {$\circled{-3}$};
    \node[color=black] at (2.5,12.5) {$\circled{-2}$};
    \node[color=black] at (3.5,12.5) {$\circled{-1}$};
    \node[color=black] at (4.5,12.5) {$\circled{0}$};
    \node[color=black] at (5.5,12.5) {$\circled{1}$};
    \node[color=black] at (6.5,12.5) {$\circled{2}$};
    \node[color=black] at (7.5,12.5) {$\circled{3}$};
    \node[color=black] at (8.5,12.5) {$\circled{4}$};
    \node[color=black] at (9.5,12.5) {$\circled{5}$};
    \node[color=black] at (10.5,12.5) {$\circled{6}$};
    \node[color=black] at (11.5,12.5) {$\circled{7}$};
    \node[color=black] at (12.5,0.5) {$7$};
    \node[color=black] at (12.5,1.5) {$6$};
    \node[color=black] at (12.5,2.5) {$5$};
    \node[color=black] at (12.5,3.5) {$4$};
    \node[color=black] at (12.5,4.5) {$3$};
    \node[color=black] at (12.5,5.5) {$2$};
    \node[color=black] at (12.5,6.5) {$1$};
    \node[color=black] at (12.5,7.5) {$0$};
    \node[color=black] at (12.5,8.5) {$-1$};
    \node[color=black] at (12.5,9.5) {$-2$};
    \node[color=black] at (12.5,10.5) {$-3$};
    \node[color=black] at (12.5,11.5) {$-4$};
    \node[color=black] at (0.5,-0.5) {$-2$};
    \node[color=black] at (1.5,-0.5) {$1$};
    \node[color=black] at (2.5,-0.5) {$-4$};
    \node[color=black] at (3.5,-0.5) {$3$};
    \node[color=black] at (4.5,-0.5) {$-3$};
    \node[color=black] at (5.5,-0.5) {$7$};
    \node[color=black] at (6.5,-0.5) {$0$};
    \node[color=black] at (7.5,-0.5) {$6$};
    \node[color=black] at (8.5,-0.5) {$-1$};
    \node[color=black] at (9.5,-0.5) {$2$};
    \node[color=black] at (10.5,-0.5) {$5$};
    \node[color=black] at (11.5,-0.5) {$4$};
\end{tikzpicture}
\caption{Slicing
a BPD of $\bBPD(w)$. We only depict the square $[-4, 7] \times [-4, 7]$.}
\label{F: cuts}
\end{figure}

\subsection*{Slicing back stable BPDs into upper and lower parts}

\label{S: horizontal cut}
We reduce Problem~\ref{Pb: main}
to Problem~\ref{Pb: lower}
using the \definition{horizontal cut} (represented by the red line in Figure~\ref{F: cuts}).
When applied to $D \in \bBPD(w)$,
the horizontal cut yields 
$D^\uparrow$ and $D^\downarrow$, obtained by restricting $D$ to the upper and lower half-planes
$H^\uparrow$ and $H^\downarrow$, respectively (see Definition~\ref{D: Tra}).
Then $D^\uparrow \in \BPD(H^\uparrow, \Upsilon)$ and
$D^\downarrow \in \BPD(H^\downarrow, \Delta)$
for certain boundary conditions $\Upsilon$ and $\Delta$.
We now introduce convenient notation to describe these boundary conditions.

\begin{defn}
\label{D: varphi and sigma}
A map $\varphi: \Z \rightarrow \Zn \sqcup \{\varnothing\}$ is called a \definition{horizon section}
if it is the cross section under row $r$
of some $D \in \bBPD(w)$ with the labeling with respect to $\gamma^w$.
In other words, each $i\in\Zn$ has a unique preimage
under $\varphi$
and $\varphi(i)=i$ for all sufficiently negative~$i$.

Recall that $S_- := \{ w \in S_\Z: \Des(w)\subset \Zn\}$.
We describe bijections between horizon sections and $S_-$.
For a horizon section $\varphi$, define $\sigma_\varphi$ as the permutaiton in $S_-$
that agrees with $\varphi^{-1}$ on $\Zn$.
Explicitly, 
$\sigma_\varphi(i)=j$ if $i\in\Zn$ and $\varphi(j)=i$.
Conversely, given $\sigma \in S_-$,
define the horizon section $\varphi_\sigma$ as the 
cross section of $\Rothe_w$ under row $0$.
Explicitly, 
\[
\varphi_\sigma(j)=
\begin{cases}
i &\text{if } j\in\sigma(\Zn)\text{ and }\sigma(i)=j,\\
\varnothing &\text{otherwise.}
\end{cases}
\]
Clearly, the maps $\varphi \mapsto \sigma_\varphi$ and $\sigma \mapsto \varphi_\sigma$
are mutually inverse.
\end{defn}

\begin{defn}
Given $w\in S_\Z$ and $\sigma\in S_-$,
define two boundary conditions $\Upsilon^{\sigma}$ and $\Delta^{\sigma,w}$
for $H^\uparrow$ and $H^\downarrow$, respectively, by
\begin{itemize}
\item $\Upsilon^{\sigma}_S = \Delta_N^{\sigma,w} = \varphi_\sigma$;
\item all other components $\Upsilon^{\sigma}_X(i)$ and $\Delta^{\sigma,w}_X(i)$,
for $X\in\{N,E,S,W\}$, agree with $\gamma^w_X(i)$.
\end{itemize}
Notice that $\Upsilon^{\sigma}$ does not depend on $w$.
\end{defn}

\begin{pro}
\label{P: Horizon cut}
The horizon cut defines a bijection
\begin{equation}
\label{EQ: horizon cut bijection}
\bBPD(w)\;\longrightarrow\;
\bigsqcup_{\sigma\in S_-}
\BPD(H^\uparrow,\Upsilon^{\sigma})\times
\BPD(H^\downarrow,\Delta^{\sigma,w}).
\end{equation}
Consequently,
\begin{equation}
\label{EQ: Horizon cut}
\bfS_w(\x;\y)
= \sum_{\sigma\in S_-}
\left(\sum_{D\in\BPD(H^\uparrow,\Upsilon^{\sigma})}\wt(D)\right)
\left(\sum_{D\in\BPD(H^\downarrow,\Delta^{\sigma,w})}\wt(D)\right).
\end{equation}
Moreover, in both~\eqref{EQ: horizon cut bijection} and~\eqref{EQ: Horizon cut},
one may restrict to $\sigma$
such that $\varphi_\sigma$ is the 
cross section of some $D \in \bBPD(w)$ 
under row $0$.
\end{pro}

\begin{proof}
Given $D\in\bBPD(w)$,
label $D$ so that it satisfies $\gamma^w$.
Then $D^\uparrow$ and $D^\downarrow$ inherit this labeling,
and let $\gamma^\uparrow$ and $\gamma^\downarrow$
denote their respective boundary conditions.
We have $\gamma^\uparrow_S=\gamma^\downarrow_N=\varphi$
where $\varphi$ is the cross section 
of $D$ under row $0$.
It is then routine to verify that
$\gamma^\uparrow=\Upsilon^{\sigma_\varphi}$ and
$\gamma^\downarrow=\Delta^{\sigma_\varphi,w}$

Conversely, fix $\sigma\in S_-$.
Given
$D_1\in\BPD(H^\uparrow,\Upsilon^{\sigma})$
and $D_2\in\BPD(H^\downarrow,\Delta^{\sigma,w})$,
concatenate them along the horizon to obtain a tiling $D$.
Since $\Upsilon^{\sigma}_S=\Delta^{\sigma,w}_N$,
we can label pipes in $D$ so that the labeling
agrees with that of $D_1$ (resp.~$D_2$) on the upper (resp.~lower) half-plane.
We check that $D$ is a BPD: 
$D$ is clearly 
consistent and stabilized. 
To see $D$ is reduced, 
we show $D$ has a valid labeling: 
Because labelings of both $D_1$ and $D_2$ are valid,
so is the labeling inherited by $D$.
It is immediate that $D$ satisfies $\gamma^w$, hence $D\in\bBPD(w)$. Finally, $\wt(D)=\wt(D^\uparrow)\wt(D^\downarrow)$ gives~\eqref{EQ: Horizon cut}.
\end{proof}

Thus, to study $\bfS_w(\x;\y)$ it suffices to understand the generating functions of
$\BPD(H^\uparrow,\Upsilon^{\sigma})$ and $\BPD(H^\downarrow,\Delta^{\sigma,w})$.
We begin with the upper half-plane.

\begin{lem}
\label{L: Upper Schub}
Given $w\in S_-$,
we have
\[
\sum_{D\in\BPD(H^\uparrow,\Upsilon^{w})}\wt(D)=\bfS_w(\x;\y).
\]
\end{lem}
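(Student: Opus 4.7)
The plan is to apply the horizon cut decomposition of Proposition~\ref{P: Horizon cut} to $\bfS_w(\x;\y)$ and show that, when $w\in S_-$, only the term indexed by $\sigma=w$ survives, while its lower factor is $1$. This will immediately identify the upper factor with $\bfS_w(\x;\y)$.

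By Proposition~\ref{P: Horizon cut},
\[
\bfS_w(\x;\y)=\sum_{\sigma\in S_-}
\Bigl(\sum_{D_1\in\BPD(H^\uparrow,\Upsilon^{\sigma})}\wt(D_1)\Bigr)
\Bigl(\sum_{D_2\in\BPD(H^\downarrow,\Delta^{\sigma,w})}\wt(D_2)\Bigr),
\]
and by the final sentence of that proposition, the outer sum is effectively restricted to those $\sigma\in S_-$ for which $\varphi_\sigma$ is the cross section under row~$0$ of some $D\in\bBPD(w)$. Since $w\in S_-$, we have $m:=\max(\Des(w))\leq 0$, so Lemma~\ref{L: Fixed under Des} ensures that every $D\in\bBPD(w)$ agrees below row~$m$, and in particular below row~$0$. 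Hence there is at most one surviving~$\sigma$.

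To identify that $\sigma$, I would use Lemma~\ref{L: Rothe} and compute the cross section of $\Rothe_w$ under row~$0$ directly: for each $r\in\Zn$, pipe~$r$ of $\Rothe_w$ enters from the right of row~$r$, turns down at column $w(r)$, and thereafter stays in that column; no other pipe contributes to the cross section below row~$0$. Thus this cross section is precisely $\varphi_w$ of Definition~\ref{D: varphi and sigma}, forcing $\sigma=w$. This pipe-tracing step is the only place where one must be a little careful; once done, the rest is bookkeeping.

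Finally, to see that the lower factor for $\sigma=w$ equals~$1$: again by Lemma~\ref{L: Fixed under Des}, $\Rothe_w$ has no blank tile below row~$m\leq 0$, so the restriction $\Rothe_w|_{H^\downarrow}$ has no blank tile and hence weight~$1$. Every $D_2\in\BPD(H^\downarrow,\Delta^{w,w})$ agrees with this restriction, so $\BPD(H^\downarrow,\Delta^{w,w})=\{\Rothe_w|_{H^\downarrow}\}$ and its generating function is~$1$. Substituting back yields
\[
\bfS_w(\x;\y)=\sum_{D\in\BPD(H^\uparrow,\Upsilon^{w})}\wt(D),
\]
as claimed.
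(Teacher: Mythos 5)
Your proof is correct and takes essentially the same route as the paper: apply the horizon cut (Proposition~\ref{P: Horizon cut}), invoke Lemma~\ref{L: Fixed under Des} to pin down the unique contributing $\sigma$ as $w$ via the Rothe diagram, and invoke it again to see that the lower factor is $1$. The only cosmetic difference is that you spell out the pipe-tracing identification of the cross section of $\Rothe_w$ under row~$0$ with $\varphi_w$, whereas the paper cites this directly from Definition~\ref{D: varphi and sigma}.
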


\begin{proof}
By Lemma~\ref{L: Fixed under Des},
since $\Des(w)\subset\Zn$,
all BPDs in $\bBPD(\sigma)$
agree below row~$0$.
Hence there is exactly one $\sigma\in S_-$
that contributes to the right hand side of~\eqref{EQ: Horizon cut}:
the $\sigma$ such that $\varphi_\sigma$ 
is the cross section of 
$\Rothe_w$ under row $0$, so $\sigma = w$.
By Lemma~\ref{L: Fixed under Des} again,
$\BPD(H^\downarrow,\Delta^{w,w})$ consists of a single BPD without any $\btile$.
Applying~\eqref{EQ: Horizon cut} to $\bfS_u$
yields
\[
\bfS_w(\x;\y)
=\Big(\sum_{D\in\BPD(H^\uparrow,\Upsilon^{w})}\wt(D)\Big)\cdot1,
\]
as desired.
\end{proof}

\begin{rem}
\label{R: Upper = Schur}
Given a $0$-Grassmannian $\sigma=w_\lambda$,
we observe that its corresponding 
$\varphi_\sigma$ satisfies:
$\varphi_\sigma(c)<\varphi_\sigma(c')$ whenever
$\varphi_\sigma(c)\neq\varnothing$, $\varphi_\sigma(c')\neq\varnothing$, and $c<c'$.
By Lemma~\ref{L: Boundary and crossing},
BPDs in $\BPD(H^\uparrow,\Upsilon^{\sigma})$
contain no $\ptile$.
These are exactly the half-plane crossless pipedreams studied by
Lam, Lee, and Shimozono~\cite{LLS}*{\S5.3},
whose generating function is $s_\lambda(\x\|\y)$.
Our Lemma~\ref{L: Upper Schub} would 
say that their generating function is
$\bfS_{w_\lambda}(\x; \y) = s_\lambda(\x\|\y)$.
Thus, Lemma~\ref{L: Upper Schub} extends their result.
\end{rem}

We now reformulate~\eqref{EQ: Horizon cut} by introducing a new polynomial.

\begin{defn}
\label{D: Lower Schub}
For $w\in S_\Z$ and $\sigma\in S_-$,
let $\LBPD(\sigma,w):=\BPD(H^\downarrow,\Delta^{\sigma,w})$
and define the \definition{lower Schubert polynomial} as
\[
\lfS_{\sigma,w}(\x;\y)
:=\sum_{D\in\LBPD(\sigma,w)}\wt(D).
\]
\end{defn}

\begin{cor}
For $w\in S_\Z$,
\begin{equation}
\label{EQ: Horizon cut 2}
\bfS_w(\x;\y)
=\sum_{\sigma\in S_-}
\bfS_\sigma(\x;\y)\:\lfS_{\sigma,w}(\x;\y).
\end{equation}
Moreover, one may restrict to $\sigma$ such that $\varphi_\sigma$
is the cross section of some $D \in \bBPD(w)$
under row $0$.
\end{cor}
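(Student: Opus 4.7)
The plan is to simply combine the horizontal-cut factorization with the already-established identification of the upper-half-plane generating function as a back stable Schubert function. More specifically, I would start from the identity
\[
\bfS_w(\x;\y)
= \sum_{\sigma\in S_-}
\Bigl(\sum_{D\in\BPD(H^\uparrow,\Upsilon^{\sigma})}\wt(D)\Bigr)
\Bigl(\sum_{D\in\BPD(H^\downarrow,\Delta^{\sigma,w})}\wt(D)\Bigr)
\]
supplied by Proposition~\ref{P: Horizon cut}. The second factor is by Definition~\ref{D: Lower Schub} equal to $\lfS_{\sigma,w}(\x;\y)$, so the only remaining step is to identify the first factor.

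For the first factor, I would apply Lemma~\ref{L: Upper Schub} with the role of $w$ there played by our current $\sigma$. Since $\sigma\in S_-$, we have $\Des(\sigma)\subseteq\Z_{\le 0}$, which is exactly the hypothesis of Lemma~\ref{L: Upper Schub}, and the lemma yields
\[
\sum_{D\in\BPD(H^\uparrow,\Upsilon^{\sigma})}\wt(D)=\bfS_\sigma(\x;\y).
\]
Plugging this into the horizontal-cut identity gives the asserted formula
\[
\bfS_w(\x;\y)=\sum_{\sigma\in S_-}\bfS_\sigma(\x;\y)\,\lfS_{\sigma,w}(\x;\y).
\]

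For the ``moreover'' restriction, I would observe that if $\varphi_\sigma$ is not the cross section below row~$0$ of any $D\in\bBPD(w)$, then no BPD of $\bBPD(w)$ decomposes into a pair with this horizon profile, so by the bijection in Proposition~\ref{P: Horizon cut} at least one of $\BPD(H^\uparrow,\Upsilon^\sigma)$ or $\BPD(H^\downarrow,\Delta^{\sigma,w})$ must be empty. In that case the corresponding summand already vanishes, so restricting the sum over $\sigma$ to those yielding an admissible cross section changes nothing. This uses exactly the same ``restriction'' clause already proved in Proposition~\ref{P: Horizon cut}, so no new argument is required.

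There is essentially no obstacle here: the whole proof is a two-line substitution once Proposition~\ref{P: Horizon cut} and Lemma~\ref{L: Upper Schub} are in hand. The only subtlety worth spelling out is that Lemma~\ref{L: Upper Schub}, although stated and proved for a fixed input $w\in S_-$, is being reapplied with the dummy variable $\sigma$ of the outer sum, and one should note that its hypothesis $\Des(\sigma)\subseteq\Z_{\le 0}$ is automatic from $\sigma\in S_-$.
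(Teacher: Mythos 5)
Your proposal is correct and follows the paper's own argument exactly: substitute Lemma~\ref{L: Upper Schub} (applied to $\sigma$, noting $\Des(\sigma)\subseteq\Z_{\le 0}$ since $\sigma\in S_-$) and Definition~\ref{D: Lower Schub} into the identity~\eqref{EQ: Horizon cut} from Proposition~\ref{P: Horizon cut}, with the ``moreover'' clause inherited directly from that proposition.
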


\begin{proof}
This follows directly from substituting Lemma~\ref{L: Upper Schub}
and Definition~\ref{D: Lower Schub} into~\eqref{EQ: Horizon cut}.
\end{proof}

We next identify certain coefficients $a^w_\lambda(\x;\y)$
as lower Schubert polynomials.

\begin{lem}
\label{L: Lower Schub}
Let $w\in S_\Z$ with $\Des(w)\subset\Z_{\ge0}$
and let $\lambda$ be a partition.
Then $a^w_\lambda(\x;\y)=\lfS_{w_\lambda,w}(\x;\y)$.
\end{lem}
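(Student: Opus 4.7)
The plan is to start from the horizontal-cut identity~\eqref{EQ: Horizon cut 2} and show that the hypothesis $\Des(w)\subset\Z_{\ge 0}$ forces only $0$-Grassmannian $\sigma\in S_-$ to contribute. By the ``moreover'' clause of Proposition~\ref{P: Horizon cut},
\[
\bfS_w(\x;\y)=\sum_{\sigma}\bfS_\sigma(\x;\y)\,\lfS_{\sigma,w}(\x;\y),
\]
where the sum ranges over those $\sigma\in S_-$ for which $\varphi_\sigma$ is the cross section below row $0$ of some $D\in\bBPD(w)$.

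I claim every such $\sigma$ is $0$-Grassmannian. Fix such a $D$, labeled with respect to $\gamma^w$, and let $\varphi$ be its cross section below row $0$, so that $\sigma=\sigma_\varphi$. Because pipes in a BPD move only downward and leftward, no pipe can enter $H^\uparrow$ across its south edge; combined with $\Upsilon^{\sigma}_N=\Upsilon^{\sigma}_W=\varnothing$ and $\Upsilon^{\sigma}_E(r)=r$ only for $r\le 0$, every pipe appearing in $D^\uparrow$ carries a label in $\Zn$. For any two such labels $i_1<i_2\le 0$, the hypothesis $\Des(w)\subset\Z_{\ge 0}$ gives $w(i_1)<w(i_2)$, so $(i_1,i_2)$ is not an inversion of $w$. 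Since in a reduced BPD for $w$ two given pipes cross if and only if they form an inversion of $w$, pipes $i_1$ and $i_2$ do not cross anywhere in $D$, in particular not in $D^\uparrow$. Applying Lemma~\ref{L: Boundary and crossing} to the rectangle $H^\uparrow=\Zn\times\Z$ with boundary $\Upsilon^{\sigma}$ now yields $\varphi^{-1}(i_1)<\varphi^{-1}(i_2)$. Hence $\sigma_\varphi$, which agrees with $\varphi^{-1}$ on $\Zn$, is strictly increasing on $\Zn$, so it is $0$-Grassmannian.

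Granting the claim, every contributing $\sigma$ equals $w_\mu$ for some partition $\mu$, and using $\bfS_{w_\mu}(\x;\y)=s_\mu(\x\|\y)$ the identity becomes
\[
\bfS_w(\x;\y)=\sum_\mu \lfS_{w_\mu,w}(\x;\y)\,s_\mu(\x\|\y),
\]
extended over all partitions $\mu$ (missing indices contribute $\lfS_{w_\mu,w}=0$). Comparing with the defining expansion $\bfS_w=\sum_\lambda a^w_\lambda\,s_\lambda(\x\|\y)$ and invoking that $\{s_\lambda(\x\|\y)\}$ is a basis of $\Lambda(\x\|\y)\otimes_{\Q[\y]}\Q[\x;\y]$ as a $\Q[\x;\y]$-module, uniqueness of the expansion yields $a^w_\lambda(\x;\y)=\lfS_{w_\lambda,w}(\x;\y)$.

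The principal obstacle is the crossing-versus-inversion dichotomy for reduced BPDs of $w$ used above. Although standard in the BPD literature, this fact is not explicitly packaged as a lemma in the excerpt and would need either a short justification or a citation. Once granted, the rest of the argument is routine bookkeeping with Proposition~\ref{P: Horizon cut} and Lemma~\ref{L: Boundary and crossing}.
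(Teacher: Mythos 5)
Your proof is correct and follows essentially the same strategy as the paper: start from the horizontal-cut expansion~\eqref{EQ: Horizon cut 2}, show by a crossing argument that only $0$-Grassmannian $\sigma$ contribute, and then compare coefficients in the double Schur basis. The one point worth correcting is your closing worry about an unsourced ``crossing-versus-inversion dichotomy'': this is precisely Lemma~\ref{L: Boundary and crossing} applied to $\A=\Z\times\Z$ with $\gamma=\gamma^w$ (pipe $r$ enters from row $r$ and exits from column $w(r)$, so pipes $r<r'$ cross iff $w(r)>w(r')$), which is exactly how the paper invokes it, so no external reference is needed; otherwise, checking all pairs $i_1<i_2\le 0$ rather than only consecutive $i-1,i$ is a harmless over-generalization of the paper's argument.
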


\begin{proof}
Consider the right-hand side of~\eqref{EQ: Horizon cut 2}.
We know the contribution of $\sigma$
is $0$ unless $\varphi_\sigma$ is the 
cross section of some $D \in \bBPD(w)$.
We claim such $\sigma$ are $0$-Grassmannian. 
If not, 
then $\sigma(i-1)>\sigma(i)$ for some $i\in\Zn$
and say $D \in \bBPD(w)$ has $\varphi_\sigma$
as the cross section under row $0$.
By Lemma~\ref{L: Boundary and crossing} and $\Des(w) \subset \Z_{\geq 0}$,
pipe $i$ and pipe $i-1$ in $D$ cannot cross.
But Lemma~\ref{L: Boundary and crossing} and 
$\sigma(i-1)>\sigma(i)$ implies that 
these two pipes cross in the upper-half plane,
so we reach a contradiction.

Thus, the sum in~\eqref{EQ: Horizon cut 2}
can be restricted to $\sigma$ that are $0$-Grassmannian.
Since $0$-Grassmannian permutations correspond bijectively to partitions,
we may rewrite~\eqref{EQ: Horizon cut 2} as
\[
\bfS_w(\x;\y)
=\sum_{\lambda}\bfS_{w_\lambda}(\x;\y)\:\lfS_{w_\lambda,w}(\x;\y),
\]
and the claim follows because $\bfS_{w_\lambda}(\x;\y)=s_\lambda(\x\|\y)$.
\end{proof}

Finally, we express all $a^w_\lambda(\x;\y)$ in terms of lower Schubert polynomials.

\begin{cor}
\label{C: a = lower * lower}
For $w\in S_\Z$,
\begin{equation}
a^w_\lambda(\x;\y)
=\sum_{\sigma\in S_-}
\omega_1(\lfS_{w_{\lambda'},\neg\sigma}(\x;\y))\:
\lfS_{\sigma,w}(\x;\y).
\end{equation}
\end{cor}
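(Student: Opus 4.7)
The plan is to start from the horizon-cut identity~\eqref{EQ: Horizon cut 2}, which already expresses $\bfS_w(\x;\y)$ as a sum over $\sigma \in S_-$ of a product $\bfS_\sigma(\x;\y)\,\lfS_{\sigma,w}(\x;\y)$. To extract $a^w_\lambda(\x;\y)$ I need to expand each back stable double Schubert factor $\bfS_\sigma(\x;\y)$ in the double Schur basis. Writing $\bfS_\sigma(\x;\y) = \sum_\lambda a^\sigma_\lambda(\x;\y)\,s_\lambda(\x\|\y)$ and substituting into~\eqref{EQ: Horizon cut 2} gives an expansion
\[
\bfS_w(\x;\y) \;=\; \sum_{\sigma \in S_-}\sum_\lambda \bigl(a^\sigma_\lambda(\x;\y)\,\lfS_{\sigma,w}(\x;\y)\bigr)\,s_\lambda(\x\|\y).
\]
Comparing with the defining expansion $\bfS_w = \sum_\lambda a^w_\lambda\,s_\lambda(\x\|\y)$ and using that the $s_\lambda(\x\|\y)$ are a basis of $\Lambda(\x\|\y)$ over $\Q(\x;\y)$, I can read off
\[
a^w_\lambda(\x;\y) \;=\; \sum_{\sigma \in S_-} a^\sigma_\lambda(\x;\y)\,\lfS_{\sigma,w}(\x;\y).
\]

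The next step is to rewrite each coefficient $a^\sigma_\lambda(\x;\y)$ in terms of a lower Schubert polynomial. By Corollary~\ref{C: Omega}, $a^\sigma_\lambda(\x;\y) = \omega_1(a^{\neg\sigma}_{\lambda'}(\x;\y))$. Since $\sigma \in S_-$ means $\Des(\sigma) \subset \Z_{\le 0}$, Lemma~\ref{L: Negation and Des} gives $\Des(\neg\sigma) \subset \Z_{\ge 0}$. This is precisely the hypothesis under which Lemma~\ref{L: Lower Schub} applies to $\neg\sigma$, so $a^{\neg\sigma}_{\lambda'}(\x;\y) = \lfS_{w_{\lambda'},\neg\sigma}(\x;\y)$. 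Composing yields $a^\sigma_\lambda(\x;\y) = \omega_1\!\bigl(\lfS_{w_{\lambda'},\neg\sigma}(\x;\y)\bigr)$, and substituting this into the sum above gives the desired formula.

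The only nontrivial point I foresee is the justification that it is legitimate to extract the coefficient of $s_\lambda(\x\|\y)$ after the horizon-cut expansion; that is, that the $\lfS_{\sigma,w}(\x;\y)$ are scalars in $\Q(\x;\y)$ rather than elements that interact with the double Schur basis. This is immediate from Definition~\ref{D: Lower Schub}, since weights of BPDs of $H^\downarrow$ are polynomials in the $x_i,y_j$. Everything else is a direct bookkeeping step, with the key combinatorial input being the horizon-cut bijection of Proposition~\ref{P: Horizon cut} and the identification of $a^{\neg\sigma}_{\lambda'}$ with a lower Schubert polynomial on the half-plane via Lemma~\ref{L: Lower Schub}.
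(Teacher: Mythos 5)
Your proof is correct and follows essentially the same route as the paper's: extract the coefficient of $s_\lambda(\x\|\y)$ from~\eqref{EQ: Horizon cut 2}, then rewrite $a^\sigma_\lambda(\x;\y)$ via Corollary~\ref{C: Omega}, Lemma~\ref{L: Negation and Des}, and Lemma~\ref{L: Lower Schub}. Your additional remark that $\lfS_{\sigma,w}(\x;\y)$ lies in the coefficient ring is a reasonable bit of care that the paper leaves implicit.
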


\begin{proof}
Extracting the coefficient of $s_\lambda(\x\|\y)$
from~\eqref{EQ: Horizon cut 2} gives
\[
a^w_\lambda(\x;\y)
=\sum_{\sigma\in S_-}
a^\sigma_\lambda(\x;\y)\:\lfS_{\sigma,w}(\x;\y).
\]
By Corollary~\ref{C: Omega},
$a^\sigma_\lambda(\x;\y)=\omega_1(a_{\lambda'}^{\neg\sigma}(\x;\y))$.
Then Lemma~\ref{L: Negation and Des} implies
$\Des(\neg\sigma)\subset\Z_{\ge0}$,
and by Lemma~\ref{L: Lower Schub},
$a_{\lambda'}^{\neg\sigma}(\x;\y)=\lfS_{w_{\lambda'},\neg\sigma}(\x;\y)$,
completing the proof.
\end{proof}

Thus, Problem~\ref{Pb: main}
is reduced to Problem~\ref{Pb: lower}.

\subsection*{Slicing BPDs of the lower half-plane 
into a trapezoid and a triangle}
\label{S: diagonal cut}

We reduce Problem~\ref{Pb: lower}
to Problem~\ref{Pb: TBPD} by further dissembling the polynomial 
$\lfS_{\sigma, w}(\x; \y)$
into two smaller pieces.
One piece will automatically have a satisfying
combinatorial formula
while the other piece is left to study in the 
next section.

We apply the \definition{diagonal cut} (represented by the
green line in Figure~\ref{F: cuts}).
Analogous to the horizon cut in the previous section,
the diagonal cut on
$D \in \LBPD(\sigma, w)$
cuts $D$ into $D^{\tra}$
and $D^{\tri}$ by restricting
$D$ to $\A^{\tra}$ and $\A^{\tri}$, respectively (see Definition~\ref{D: Tra}).
Label $D$ with respect to 
$\Delta^{\sigma, w}$;
then $D^{\tra}$ and $D^{\tri}$
inherit this labeling. 
We now record the boundary condition of $D^{\tra}$ that can arise.

\begin{defn}
\label{D: Gamma}
Take $\sigma \in S_-$ and $w \in S_\Z$.
Let $\Gamma(\sigma, w)$ be the set of 
boundary condition $\gamma$ of $\A^{\tra}$
such that 
\[
\gamma_N = \varphi_\sigma(i)\mid_{(-\infty, 1]},\qquad 
\gamma_S = \gamma_S^w,\qquad
\text{and for sufficiently large $r$: }
\gamma_E(r) = r,\ \ \gamma_N(r) = \varnothing.
\]

Together with $\sigma$ and $w$,
each $\gamma \in \Gamma(\sigma, w)$
determines a boundary condition
\definition{$\theta$} on $\A^{\tri}$ by
\[
\theta_N = \varphi_\sigma\!\mid_{[2,\,\infty)},\qquad
\theta_E(i) = i,\qquad
\theta_S = \gamma_N,\qquad
\theta_W = \gamma_E.
\]
Then define $\BPD^{\tra}(\gamma) := \BPD(\A^{\tra}, \gamma)$
and $\BPD^{\tri}(\gamma, \sigma, w) = \BPD(\A^{\tri}, \theta^{\gamma, \sigma, w})$.
\end{defn}

\begin{pro}
\label{P: Diag cut}
The diagonal cut yields a bijection
\[
\LBPD(\sigma, w) \;\longrightarrow\;
\bigsqcup_{\gamma \in \Gamma(\sigma, w)}
\BPD^{\tra}(\gamma) \times \BPD^{\tri}(\gamma, \sigma, w).
\]
Consequently,
\begin{equation}
\label{EQ: Diag cut}
\lfS_{\sigma, w}(\x;\y)
= \sum_{\gamma \in \Gamma(\sigma, w)}
\left(\sum_{D\in\BPD^{\tra}(\gamma)}\wt(D)\right)
\left(\sum_{D\in\BPD^{\tri}( \gamma, \sigma, w)}\wt(D)\right).
\end{equation}
\end{pro}
\begin{proof}
Analogous to the proof of Proposition~\ref{P: Horizon cut}.
\end{proof}

Note that the region $\A^{\tri}$
consists of the cells $(i,j)$ with $0 < i < j$.
Therefore the generating function 
$\sum_{D\in\BPD^{\tri}( \gamma, \sigma, w)}\wt(D)$
in~\eqref{EQ: Diag cut} is already a combinatorial formula
in which each summand is a product of distinct factors $(x_i - y_j)$
with $0 < i < j$.
After specializing $\x \mapsto \y$,
each summand is a product of distinct type~$1$ factors.

Hence our focus is the other factor in~\eqref{EQ: Diag cut}.
For $\gamma \in \Gamma(\sigma, w)$, write 
$$\fS^{\tra}_\gamma(\x; \y) := \sum_{D\in\BPD^{\tra}(\gamma)}\wt(D).$$
Our goal becomes Problem~\ref{Pb: TBPD}:
give a combinatorial formula for
$\fS^{\tra}_\gamma(\y; \y)$
that is manifestly a sum of distinct products
of type $3$ factors.

\section{BPDs in the Trapezoid region
and Increasing chains}
\label{S: TBPD}

\subsection*{Preparation}

Fix $\sigma \in S_-$, $w \in S_\Z$, 
and $\gamma \in \Gamma(\sigma, w)$.
There exist integers $a \leq 0 < n$ such that 
\begin{equation}
\label{EQ: finding a n}   
\text{$\gamma_N(c) = \gamma_S(c) = c$ for all $c < a$, 
and $\gamma_E(r) = \gamma_S(r) = r$ for all $r > n$.}
\end{equation}
It follows that every BPD in $\TBPD(\gamma)$
agrees at all positions $(r,c)$ with $r > n$ or $c < a$: 
these cells must be $\rtile$ if $r = c$, 
and $\vtile$ otherwise.
In particular, such cells cannot be $\btile$.

Hence, we fix $a \leq 0 < n$ 
and restrict attention to boundary conditions $\gamma$ 
satisfying~\eqref{EQ: finding a n}.
We consider the finite trapezoidal region
\[
\A^{\tra}_{a,n} :=
\{(r,c) : a \le c, \, r \le n\} \cap \A^{\tra}.
\]
Each $D \in \TBPD(\gamma)$ can be restricted 
to $\A^{\tra}_{a,n}$, and we first characterize
the boundary conditions that can arise.

\begin{defn}
A boundary condition $\delta$ 
of $\A^{\tra}_{a,n}$ is \definition{regular}
if $\delta_S(c) \neq \varnothing$ for all 
$c \in [a,n]$ and 
$\delta_W(r) = \varnothing$ 
for all $r \in [n]$.
We write $\TBPD_{a,n}(\delta) := \BPD(\A^{\tra}_{a,n}, \delta)$.
\end{defn}
In words, these requirements on 
$\delta$ make sure that
a BPD satisfying $\delta$
has pipes exiting from each
column but no pipes exiting from
rows.

\begin{lem}
\label{L: regular boundary condition}
The restriction map $D \mapsto D\mid_{\A^{\tra}_{a,n}}$
is a weight-preserving bijection
\[
\TBPD(\gamma) \longrightarrow \TBPD_{a,n}(\delta)
\]
for some regular $\delta$.
Specifically, $\delta$ is the boundary condition
obtained by restricting the maps in $\gamma$:
\[
\delta_N = \gamma_N \mid_{[a,n]}, \qquad
\delta_E = \gamma_E \mid_{[n]}, \qquad
\delta_S = \gamma_S \mid_{[a,n]}, \qquad
\delta_W = \gamma_W \mid_{[n]}.
\]
Consequently,
\[
\fS^{\tra}_\gamma(\x; \y) = 
\fS^{\tra}_{a,n,\delta}(\x ;\y)
\textrm{ where }
\fS^{\tra}_{a,n,\delta}(\x ;\y):=
\sum_{D \in \TBPD_{a,n}(\delta)} \wt(D).
\]
\end{lem}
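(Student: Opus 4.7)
The plan is to prove the lemma by establishing a weight-preserving bijection in two directions. The key observation is that the condition~\eqref{EQ: finding a n} together with the structure of $\gamma^w$ rigidifies the configuration of any $D \in \TBPD(\gamma)$ on the complement $\A^{\tra} \setminus \A^{\tra}_{a,n}$, so that only cells in the finite region carry information.

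First I would establish the forcing in detail. For a row $r > n$, the assumptions $\gamma_E(r) = r$ and $\gamma_S(r) = w^{-1}(r) = r$ mean pipe $r$ enters at the right of $(r,r)$ and must ultimately exit through the bottom of column $r$; since $\gamma_W(r) = \varnothing$, no pipe exits row $r$ to the left, and by~\eqref{EQ: finding a n} no other pipe enters row $r$ from the right. Hence no horizontal pipe segment can live in the cells $(r,c)$ with $c < r$, which rules out the tiles $\htile$, $\jtile$, $\rtile$, and $\ptile$ there, leaving only $\vtile$ or $\btile$, and forces $D(r,r) = \rtile$. A column-wise argument using tile consistency then rules out $\btile$: for $c \le n$, the label $\gamma_S(c) = w^{-1}(c)$ forces some pipe to occupy $(r,c)$ for all sufficiently large $r$, and since no pipe can enter a column at a row $r > n$ (no horizontal segments are available there), that pipe must already be present at every $r > n$, yielding $D(r,c) = \vtile$. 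The symmetric column-wise argument for $c < a$ uses $\gamma_N(c) = \gamma_S(c) = c$ together with $\gamma_W \equiv \varnothing$ to force $D(r,c) = \vtile$ for all $r \ge 1$.

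Consequently the restriction $D \mapsto D|_{\A^{\tra}_{a,n}}$ is well defined and weight preserving, because the forced cells contain no $\btile$. The induced boundary condition on $\A^{\tra}_{a,n}$ is exactly $\delta$; regularity follows since $\delta_S(c) = \gamma_S(c) = w^{-1}(c)$ is never $\varnothing$ and $\delta_W(r) = \gamma_W(r) = \varnothing$. For the inverse, given any $D' \in \TBPD_{a,n}(\delta)$, I would extend it to a tiling $\widetilde D$ of $\A^{\tra}$ by placing $\rtile$ at $(r,r)$ for $r > n$ and $\vtile$ at every other cell of $\A^{\tra} \setminus \A^{\tra}_{a,n}$. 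Consistency along the new boundary is immediate from the regularity of $\delta$: each column's exit pipe $\delta_S(c)$ continues straight down through the appended $\vtile$'s, the empty left boundaries $\delta_W(r) = \varnothing$ match the $\vtile$-columns on the left, and the appended $\rtile$'s supply the required boundary values $\gamma_E(r) = r$. Reducedness is inherited from $D'$ since no new crossings are introduced, stabilization is automatic from the appended tails, and a direct label check gives $\widetilde D \in \TBPD(\gamma)$.

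The two maps are mutually inverse by construction, and both preserve weights, so we obtain the bijection and the identity of generating functions. The main obstacle will be the forcing step: ruling out $\btile$'s in rows $r > n$ and columns $c \le n$ simultaneously requires propagating the ``exits at infinity'' information for every column through all rows below $n$, using both consistency of tiles and the absence of side entries into those columns at rows $r > n$. Once this is handled, the rest of the argument is formal.
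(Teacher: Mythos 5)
Your overall strategy matches the paper's: both rely on the forcing claim that for any $D \in \TBPD(\gamma)$ the cells $(r,c)$ outside $\A^{\tra}_{a,n}$ are $\rtile$ on the diagonal and $\vtile$ elsewhere, then observe that restriction and the canonical extension are mutually inverse and preserve weights, and finally read off regularity from row $n+1$ and column $a-1$. The paper treats the forcing as routine and does not prove it; you attempt to prove it, which is a reasonable thing to fill in, but your justification has a genuine gap.

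The gap is in the step where, for a single row $r > n$, you conclude ``no horizontal pipe segment can live in the cells $(r,c)$ with $c < r$'' purely from the facts that only pipe $r$ enters row $r$ from the east, that $\gamma_W(r)=\varnothing$, and that pipe $r$ goes straight down at $(r,r)$. That conclusion does not follow from this local, single-row analysis: a pipe can arrive at cell $(r,c_2)$ from \emph{above} (a $\jtile$), travel left through $\htile$'s, and turn down at some $(r,c_1)$ (an $\rtile$), producing horizontal segments in row $r$ while never ``entering from row $r$'' or ``exiting from row $r$'' in the sense of the boundary maps. Because your subsequent column-wise argument for ruling out $\btile$ explicitly invokes ``no horizontal segments are available there,'' this flaw propagates and the argument becomes circular. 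The correct way to establish the forcing is global rather than row-local: for instance, one first observes that for $r > n$ pipe $r$ enters at $(r,r)$ and exits column $r$, hence (since pipes move only down and left) never leaves column $r$, so $D(r,r)=\rtile$ and $D(i,r)=\vtile$ for $i>r$; one then shows that the cross section below some sufficiently large row $R$ equals $\gamma_S$ (using that only finitely many pipes enter from $[a,1]$ or from rows $\le n$, and that columns $c<a$ are all $\vtile$ by the monotonicity of pipe trajectories), and performs a downward induction on $r$ from $R$ to $n+1$: if the cross section below row $r+1$ is $\gamma_S\mid_{(-\infty,r+1]}$ then, since $(r+1,r+1)=\rtile$, consistency forces $(r+1,c)=\vtile$ for all $c\le r$ by sweeping leftward, so the cross section below row $r$ is $\gamma_S\mid_{(-\infty,r]}$. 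With the forcing established this way, the rest of your write-up (restriction/extension, weight preservation, regularity) is sound.
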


\begin{proof}
Take $D \in \TBPD(\gamma)$.
By assumption, if $(r,c) \notin \A^{\tra}_{a,n}$,
then $D(r,c) = \rtile$ if $r = c$,
and $D(r,c) = \vtile$ otherwise. 
Hence $D\mid_{\A^{\tra}_{a,n}}$ 
satisfies $\delta$.
Since $D$ has no $\btile$ outside $\A^{\tra}_{a,n}$,
the map preserves weights. 
Bijectivity is immediate. 
To check regularity, note that 
$(n+1,a), \dots, (n+1,n)$
are all $\vtile$ for any $D \in \TBPD(\gamma)$.
Thus, in every $D \in \TBPD_{a,n}(\gamma)$,
each tile in the bottom row ($r = n$) 
connects to the boundary below,
so $\delta_S(c) \neq \varnothing$ 
for all $c \in [a,n]$.
Similarly, each tile in the leftmost
column ($c = a$) does not connect to the west boundary,
so $\delta_W(r) = \varnothing$ 
for all $r \in [n]$.
\end{proof}

We may restate Problem~\ref{Pb: TBPD}
as follows. 

\begin{prob}
\label{Pb: TBPD 2}
Fix $a \leq 0 < n$.
Let $\gamma$ be a regular boundary condition
of $\A^{\tra}_{a,n}$.
Give a combinatorial formula
for $\fS^{\tra}_{a,n,\gamma}(\y; \y)$
The formula should be
a sum where each summand is a distinct product of
type~3 factors.
\end{prob}

We first handle a vanishing case.
If there exists $r \in [n]$
with $\gamma_N(r) = \gamma_E(r) = \varnothing$,
then the cell $(r,r)$ 
must be a $\btile$ for every $D \in \TBPD_{a,n}(\gamma)$,
so $\wt(D)$
vanishes after substituting $\x \mapsto \y$.
Henceforth we assume that
for all $r \in [n]$, 
either $\gamma_N(r) \neq \varnothing$ or $\gamma_E(r)\neq \varnothing$.

Next, suppose $\gamma_N(r) = \varnothing$
for some $r \in [n]$.
Define $\widetilde{\gamma}$ to be the boundary condition
of $\A^{\tra}$ obtained by setting 
$\widetilde{\gamma}_N(r) = \gamma_E(r)$ and 
$\widetilde{\gamma}_E(r) = \varnothing$, 
agreeing with $\gamma$ elsewhere. 
There is a weight-preserving bijection 
\begin{equation}
\label{EQ: TBPD bijection reduction}
\TBPD_{a,n}(\gamma) \longrightarrow \TBPD_{a,n}(\widetilde{\gamma})    
\end{equation}
given by replacing the tile in $(r,r)$:
if it is $\rtile$ (resp. $\htile$), 
replace it by $\vtile$ (resp. $\jtile$).
It therefore suffices to resolve 
Problem~\ref{Pb: TBPD 2}
for $\TBPD_{a,n}(\widetilde{\gamma})$ instead of $\TBPD_{a,n}(\gamma)$.

\begin{exa}
\label{Ex: twist j tile}
Take $a = -1$ and $n = 4$.
Consider a regular $\gamma$ 
where $D \in \TBPD_{a,n}(\gamma)$ is
the BPD on the left:
\[
\begin{tikzpicture}[x=2em,y=2em,thick,rounded corners,color = blue]
\draw[step=1,gray,ultra thin] (0,0) grid (6,1);
\draw[step=1,gray,ultra thin] (0,1) grid (5,2);
\draw[step=1,gray,ultra thin] (0,2) grid (4,3);
\draw[step=1,gray,ultra thin] (0,3) grid (3,4);
\draw[color=blue, thick] (.5,0)--(.5, 3.5)--(1.5,3.5)--(1.5,4);
\draw[color=blue, thick](1.5,0)--(1.5,2.5)--(4,2.5);
\draw[color=blue, thick](2.5,0)--(2.5,4);
\draw[color=blue, thick](3.5,0)--(3.5,.5)--(6, .5);
\draw[color=blue, thick](4.5,0)--(4.5,2);
\draw[color=blue, thick](5.5,0)--(5.5,1);
\end{tikzpicture}
\qquad \raisebox{3.8em}{$\longrightarrow$} \qquad
\begin{tikzpicture}[x=2em,y=2em,thick,rounded corners,color = blue]
\draw[step=1,gray,ultra thin] (0,0) grid (6,1);
\draw[step=1,gray,ultra thin] (0,1) grid (5,2);
\draw[step=1,gray,ultra thin] (0,2) grid (4,3);
\draw[step=1,gray,ultra thin] (0,3) grid (3,4);
\draw[color=blue, thick] (.5,0)--(.5, 3.5)--(1.5,3.5)--(1.5,4);
\draw[color=blue, thick](1.5,0)--(1.5,2.5)--(3,2.5);
\draw[color=red, very thick]
(3,2.5)--(3.5,2.5)--(3.5,3);
\draw[color=blue, thick](2.5,0)--(2.5,4);
\draw[color=blue, thick](3.5,0)--(3.5,.5)--(6, .5);
\draw[color=blue, thick](4.5,0)--(4.5,2);
\draw[color=blue, thick](5.5,0)--(5.5,1);
\end{tikzpicture}
\] 
From the picture of $D$,
we have 
$\gamma_N(2) = \varnothing$. 
Let $\widetilde{\gamma}$
be the regular boundary 
condition constructed in the arguments above.
Then $\TBPD_{a,n}$
contains the BPD
on the right.
It is obtained from $D$
by applying the bijection
in~\eqref{EQ: TBPD bijection reduction}: changing 
$\htile$ in $(2,2)$ into
$\jtile$.
\end{exa} 

To summarize, we have fixed $a \leq 0 < n$
and reduced Problem~\ref{Pb: TBPD 2}
to regular $\gamma$ 
with $\gamma_N(r) \neq \varnothing$
for all $r \in [n]$.
We henceforth work only with such $\gamma$.

\subsection*{Example of the argument}
Before turning to the general construction, we give a small example illustrating how our method produces a combinatorial formula for $\fS^{\tra}_\gamma(\y;\y)$, thereby resolving Problem~\ref{Pb: TBPD 2}.
Set $a=-1$, $n=4$, and let $\gamma$ be the regular boundary condition satisfied by the BPD on the right of Example~\ref{Ex: twist j tile}. 
We enumerate $\TBPD_{a,n}(\gamma)$:
$$
\begin{tikzpicture}[x=2em,y=2em,thick,rounded corners,color = blue]
    \draw[step=1,gray,ultra thin] (0,0) grid (6,1);
    \draw[step=1,gray,ultra thin] (0,1) grid (5,2);
    \draw[step=1,gray,ultra thin] (0,2) grid (4,3);
    \draw[step=1,gray,ultra thin] (0,3) grid (3,4);
    \draw[color=blue, thick] (.5,0)--(.5, 3.5)--(1.5,3.5)--(1.5,4);
    \draw[color=blue, thick](1.5,0)--(1.5,2.5)--(3.5,2.5)--(3.5,3);
    \draw[color=blue, thick](2.5,0)--(2.5,4);
    \draw[color=blue, thick](3.5,0)--(3.5,.5)--(6, .5);
    \draw[color=blue, thick](4.5,0)--(4.5,2);
    \draw[color=blue, thick](5.5,0)--(5.5,1);
    \node[color=black] at (0.5,4.5) {$\circled{-1}$};
    \node[color=black] at (1.5,4.5) {$\circled{0}$};
    \node[color=black] at (2.5,4.5) {$\circled{1}$};
    \node[color=black] at (3.5,4.5) {$\circled{2}$};
    \node[color=black] at (4.5,4.5) {$\circled{3}$};
    \node[color=black] at (5.5,4.5) {$\circled{4}$};
\end{tikzpicture}
\quad
\begin{tikzpicture}[x=2em,y=2em,thick,rounded corners,color = blue]
    \draw[step=1,gray,ultra thin] (0,0) grid (6,1);
    \draw[step=1,gray,ultra thin] (0,1) grid (5,2);
    \draw[step=1,gray,ultra thin] (0,2) grid (4,3);
    \draw[step=1,gray,ultra thin] (0,3) grid (3,4);
    \draw[color=blue, thick] (0.5,0)--(0.5, 3.5)--(1.5,3.5)--(1.5,4);
    \draw[color=blue, thick](1.5,0)--(1.5,1.5)--(3.5,1.5)--(3.5,3);
    \draw[color=blue, thick](2.5,0)--(2.5,4);
    \draw[color=blue, thick](3.5,0)--(3.5,0.5)--(6, 0.5);
    \draw[color=blue, thick](4.5,0)--(4.5,2);
    \draw[color=blue, thick](5.5,0)--(5.5,1);
    \node[color=black] at (0.5,4.5) {$\circled{-1}$};
    \node[color=black] at (1.5,4.5) {$\circled{0}$};
    \node[color=black] at (2.5,4.5) {$\circled{1}$};
    \node[color=black] at (3.5,4.5) {$\circled{2}$};
    \node[color=black] at (4.5,4.5) {$\circled{3}$};
    \node[color=black] at (5.5,4.5) {$\circled{4}$};
\end{tikzpicture}
\quad
\begin{tikzpicture}[x=2em,y=2em,thick,rounded corners,color = blue]
    \draw[step=1,gray,ultra thin] (0,0) grid (6,1);
    \draw[step=1,gray,ultra thin] (0,1) grid (5,2);
    \draw[step=1,gray,ultra thin] (0,2) grid (4,3);
    \draw[step=1,gray,ultra thin] (0,3) grid (3,4);
    \draw[color=blue, thick] (0.5,0)--(0.5, 2.5)--(1.5,2.5)--(1.5,4);
    \draw[color=blue, thick](1.5,0)--(1.5,1.5)--(3.5,1.5)--(3.5,3);
    \draw[color=blue, thick](2.5,0)--(2.5,4);
    \draw[color=blue, thick](3.5,0)--(3.5,0.5)--(6, 0.5);
    \draw[color=blue, thick](4.5,0)--(4.5,2);
    \draw[color=blue, thick](5.5,0)--(5.5,1);
    \node[color=black] at (0.5,4.5) {$\circled{-1}$};
    \node[color=black] at (1.5,4.5) {$\circled{0}$};
    \node[color=black] at (2.5,4.5) {$\circled{1}$};
    \node[color=black] at (3.5,4.5) {$\circled{2}$};
    \node[color=black] at (4.5,4.5) {$\circled{3}$};
    \node[color=black] at (5.5,4.5) {$\circled{4}$};
\end{tikzpicture}
$$
Hence
\[
\fS^{\tra}_{a,n,\gamma}(\x;\y)=(x_3-y_2)+(x_2-y_0)+(x_1-y_{-1}).
\]
After specializing $\x\mapsto\y$, this does not yet resolve Problem~\ref{Pb: TBPD 2}, since the term $y_3-y_2$ is not type~3.

We now define a bijection
\[
\TBPD_{a,n}(\gamma)\;\xrightarrow{\;\;\sim\;\;}\;C(U,W,\rev(\alpha)),
\]
where $U=[-1,1,0,3,4,2]\in S_{[a,n]}$, $W=[0,1,2,3,4,-1]\in S_{[a,n]}$, and $\alpha=(0,1,2)$.
Under this bijection, the three BPDs above correspond, respectively, to the following elements of $C(U,W,\rev(\alpha))$ 
$$
\begin{tikzpicture}[x=2em,y=2em,thick,sharp corners, color = black]
      \draw (0,0) rectangle (6,4);
      \foreach \x in {1,...,5} \draw (\x,0) -- (\x,4);
      \foreach \y in {1,...,3} \draw (0,\y) -- (6,\y);
    
      \node at (0.5,3.5) {0};
      \node at (1.5,3.5) {1};
      \node at (2.5,3.5) {2};
      \node at (3.5,3.5) {3};
      \node at (4.5,3.5) {4};
      \node at (5.5,3.5) {-1};
    
      \node at (0.5,2.5) {-1};
      \node at (1.5,2.5) {1};
      \node at (2.5,2.5) {2};
      \node at (3.5,2.5) {3};
      \node at (4.5,2.5) {4};
      \node at (5.5,2.5) {0};
    
      \node at (0.5,1.5) {-1};
      \node at (1.5,1.5) {1};
      \node at (2.5,1.5) {0};
      \node at (3.5,1.5) {3};
      \node at (4.5,1.5) {4};
      \node at (5.5,1.5) {2};

      \node at (0.5,0.5) {-1};
      \node at (1.5,0.5) {1};
      \node at (2.5,0.5) {0};
      \node at (3.5,0.5) {3};
      \node at (4.5,0.5) {4};
      \node at (5.5,0.5) {2};

    \draw[red,very thick] (4,0) -- (4,2);   
    \draw[red,very thick] (3,1) -- (3,3);
    \draw[red,very thick] (2,2) -- (2,4);
    \draw[green,thick] (5.5,1) circle [x radius=0.35, y radius=0.8];
        \draw[blue,thick] (2.5,3) circle [x radius=0.35, y radius=0.8];
    \draw[blue,thick] (3.5,3) circle [x radius=0.35, y radius=0.8];
    \draw[blue,thick] (4.5,3) circle [x radius=0.35, y radius=0.8];
    \draw[blue,thick] (3.5,2) circle [x radius=0.35, y radius=0.8];
    \draw[blue,thick] (4.5,2) circle [x radius=0.35, y radius=0.8];
    \draw[blue,thick] (4.5,1) circle [x radius=0.35, y radius=0.8];
\end{tikzpicture}
\quad \quad 
\begin{tikzpicture}[x=2em,y=2em,thick,sharp corners, color = black]
      \draw (0,0) rectangle (6,4);
      \foreach \x in {1,...,5} \draw (\x,0) -- (\x,4);
      \foreach \y in {1,...,3} \draw (0,\y) -- (6,\y);
    
      \node at (0.5,3.5) {0};
      \node at (1.5,3.5) {1};
      \node at (2.5,3.5) {2};
      \node at (3.5,3.5) {3};
      \node at (4.5,3.5) {4};
      \node at (5.5,3.5) {-1};
    
      \node at (0.5,2.5) {-1};
      \node at (1.5,2.5) {1};
      \node at (2.5,2.5) {2};
      \node at (3.5,2.5) {3};
      \node at (4.5,2.5) {4};
      \node at (5.5,2.5) {0};
    
      \node at (0.5,1.5) {-1};
      \node at (1.5,1.5) {1};
      \node at (2.5,1.5) {2};
      \node at (3.5,1.5) {3};
      \node at (4.5,1.5) {4};
      \node at (5.5,1.5) {0};

      \node at (0.5,0.5) {-1};
      \node at (1.5,0.5) {1};
      \node at (2.5,0.5) {0};
      \node at (3.5,0.5) {3};
      \node at (4.5,0.5) {4};
      \node at (5.5,0.5) {2};

    \draw[red,very thick] (4,0) -- (4,2);   
    \draw[red,very thick] (3,1) -- (3,3);
    \draw[red,very thick] (2,2) -- (2,4);
    \draw[green,thick] (5.5,2) circle [x radius=0.35, y radius=0.8];
    \draw[blue,thick] (2.5,3) circle [x radius=0.35, y radius=0.8];
    \draw[blue,thick] (3.5,3) circle [x radius=0.35, y radius=0.8];
    \draw[blue,thick] (4.5,3) circle [x radius=0.35, y radius=0.8];
    \draw[blue,thick] (3.5,2) circle [x radius=0.35, y radius=0.8];
    \draw[blue,thick] (4.5,2) circle [x radius=0.35, y radius=0.8];
    \draw[blue,thick] (4.5,1) circle [x radius=0.35, y radius=0.8];
\end{tikzpicture}
\quad \quad 
\begin{tikzpicture}[x=2em,y=2em,thick,sharp corners, color = black]
      \draw (0,0) rectangle (6,4);
      \foreach \x in {1,...,5} \draw (\x,0) -- (\x,4);
      \foreach \y in {1,...,3} \draw (0,\y) -- (6,\y);
    
      \node at (0.5,3.5) {0};
      \node at (1.5,3.5) {1};
      \node at (2.5,3.5) {2};
      \node at (3.5,3.5) {3};
      \node at (4.5,3.5) {4};
      \node at (5.5,3.5) {-1};
    
      \node at (0.5,2.5) {0};
      \node at (1.5,2.5) {1};
      \node at (2.5,2.5) {2};
      \node at (3.5,2.5) {3};
      \node at (4.5,2.5) {4};
      \node at (5.5,2.5) {-1};
    
      \node at (0.5,1.5) {-1};
      \node at (1.5,1.5) {1};
      \node at (2.5,1.5) {2};
      \node at (3.5,1.5) {3};
      \node at (4.5,1.5) {4};
      \node at (5.5,1.5) {0};

      \node at (0.5,0.5) {-1};
      \node at (1.5,0.5) {1};
      \node at (2.5,0.5) {0};
      \node at (3.5,0.5) {3};
      \node at (4.5,0.5) {4};
      \node at (5.5,0.5) {2};

    \draw[red,very thick] (4,0) -- (4,2);   
    \draw[red,very thick] (3,1) -- (3,3);
    \draw[red,very thick] (2,2) -- (2,4);
    \draw[green,thick] (5.5,3) circle [x radius=0.35, y radius=0.8];
        \draw[blue,thick] (2.5,3) circle [x radius=0.35, y radius=0.8];
    \draw[blue,thick] (3.5,3) circle [x radius=0.35, y radius=0.8];
    \draw[blue,thick] (4.5,3) circle [x radius=0.35, y radius=0.8];
    \draw[blue,thick] (3.5,2) circle [x radius=0.35, y radius=0.8];
    \draw[blue,thick] (4.5,2) circle [x radius=0.35, y radius=0.8];
    \draw[blue,thick] (4.5,1) circle [x radius=0.35, y radius=0.8];
\end{tikzpicture}
$$

Notice that for $(u_n, \dots, u_1) \in C(U, W, \rev(\alpha))$,
we always have $[i+1, n] \subset \fix_{(\alpha_i, n]}(u_{i+1}, u_{i})$ for $i \in [n-1]$.
These fixed points are circled 
in blue while other fixed points are circled in green. 
Under our bijection, if $D$
corresponds to $(u_n, \dots, u_1)$, they satisfy:
\begin{equation}
\label{EQ: TBPD chain wt}
\wt(D)\cdot\prod_{1\le i<j\le n}(x_i-y_j)
=\wt_{\rev(\alpha)}^{n}(u_n,\dots,u_1)(x_{n-1},\dots,x_1;\y).
\end{equation}
Consequently,
\[
\fS^{\tra}_{a,n,\gamma}(\x;\y)
=\frac{\fC_{U,W,\rev(\alpha)}^{\,n}(x_{n-1},\dots,x_1;\y)}{\prod_{1\le i<j\le n}(x_i-y_j)}.
\]
By Proposition~\ref{P: double symmetry},
\[
\fC_{U,W,\rev(\alpha)}^{\,n}(x_{n-1},\dots,x_1;\y)
=\fC_{U,W,\alpha}^{\,n}(x_{1},\dots,x_{n-1};\y),
\]
and a direct enumeration of $C(U,W,\alpha)$ gives
\[
\begin{tikzpicture}[x=2em,y=2em,thick,sharp corners, color = black]
      \draw (0,0) rectangle (6,4);
      \foreach \x in {1,...,5} \draw (\x,0) -- (\x,4);
      \foreach \y in {1,...,3} \draw (0,\y) -- (6,\y);
    
      \node at (0.5,3.5) {0};
      \node at (1.5,3.5) {1};
      \node at (2.5,3.5) {2};
      \node at (3.5,3.5) {3};
      \node at (4.5,3.5) {4};
      \node at (5.5,3.5) {-1};
    
      \node at (0.5,2.5) {0};
      \node at (1.5,2.5) {1};
      \node at (2.5,2.5) {2};
      \node at (3.5,2.5) {3};
      \node at (4.5,2.5) {4};
      \node at (5.5,2.5) {-1};
    
      \node at (0.5,1.5) {0};
      \node at (1.5,1.5) {1};
      \node at (2.5,1.5) {-1};
      \node at (3.5,1.5) {3};
      \node at (4.5,1.5) {4};
      \node at (5.5,1.5) {2};

      \node at (0.5,0.5) {-1};
      \node at (1.5,0.5) {1};
      \node at (2.5,0.5) {0};
      \node at (3.5,0.5) {3};
      \node at (4.5,0.5) {4};
      \node at (5.5,0.5) {2};

    \draw[red,very thick] (4,2) -- (4,4);
    \draw[red,very thick] (3,1) -- (3,3);
    \draw[red,very thick] (2,0) -- (2,2);
    \draw[blue,thick] (3.5,1) circle [x radius=0.35, y radius=0.8];
    \draw[blue,thick] (4.5,1) circle [x radius=0.35, y radius=0.8];
    \draw[blue,thick] (5.5,1) circle [x radius=0.35, y radius=0.8];
    \draw[blue,thick] (3.5,2) circle [x radius=0.35, y radius=0.8];
    \draw[blue,thick] (4.5,2) circle [x radius=0.35, y radius=0.8];
    \draw[blue,thick] (4.5,3) circle [x radius=0.35, y radius=0.8];
    
    \draw[green,thick] (5.5,3) circle [x radius=0.35, y radius=0.8];
\end{tikzpicture}
\quad \quad 
\begin{tikzpicture}[x=2em,y=2em,thick,sharp corners, color = black]
      \draw (0,0) rectangle (6,4);
      \foreach \x in {1,...,5} \draw (\x,0) -- (\x,4);
      \foreach \y in {1,...,3} \draw (0,\y) -- (6,\y);
    
      \node at (0.5,3.5) {0};
      \node at (1.5,3.5) {1};
      \node at (2.5,3.5) {2};
      \node at (3.5,3.5) {3};
      \node at (4.5,3.5) {4};
      \node at (5.5,3.5) {-1};
    
      \node at (0.5,2.5) {0};
      \node at (1.5,2.5) {1};
      \node at (2.5,2.5) {-1};
      \node at (3.5,2.5) {3};
      \node at (4.5,2.5) {4};
      \node at (5.5,2.5) {2};
    
      \node at (0.5,1.5) {0};
      \node at (1.5,1.5) {1};
      \node at (2.5,1.5) {-1};
      \node at (3.5,1.5) {3};
      \node at (4.5,1.5) {4};
      \node at (5.5,1.5) {2};

      \node at (0.5,0.5) {-1};
      \node at (1.5,0.5) {1};
      \node at (2.5,0.5) {0};
      \node at (3.5,0.5) {3};
      \node at (4.5,0.5) {4};
      \node at (5.5,0.5) {2};

    \draw[red,very thick] (4,2) -- (4,4);
    \draw[red,very thick] (3,1) -- (3,3);
    \draw[red,very thick] (2,0) -- (2,2);
    \draw[blue,thick] (3.5,1) circle [x radius=0.35, y radius=0.8];
    \draw[blue,thick] (4.5,1) circle [x radius=0.35, y radius=0.8];
    \draw[blue,thick] (5.5,1) circle [x radius=0.35, y radius=0.8];
    \draw[blue,thick] (3.5,2) circle [x radius=0.35, y radius=0.8];
    \draw[blue,thick] (4.5,2) circle [x radius=0.35, y radius=0.8];
    \draw[blue,thick] (4.5,3) circle [x radius=0.35, y radius=0.8];
    \draw[green,thick] (5.5,2) circle [x radius=0.35, y radius=0.8];
\end{tikzpicture}
\quad \quad 
\begin{tikzpicture}[x=2em,y=2em,thick,sharp corners, color = black]
      \draw (0,0) rectangle (6,4);
      \foreach \x in {1,...,5} \draw (\x,0) -- (\x,4);
      \foreach \y in {1,...,3} \draw (0,\y) -- (6,\y);
    
      \node at (0.5,3.5) {0};
      \node at (1.5,3.5) {1};
      \node at (2.5,3.5) {2};
      \node at (3.5,3.5) {3};
      \node at (4.5,3.5) {4};
      \node at (5.5,3.5) {-1};
    
      \node at (0.5,2.5) {-1};
      \node at (1.5,2.5) {1};
      \node at (2.5,2.5) {2};
      \node at (3.5,2.5) {3};
      \node at (4.5,2.5) {4};
      \node at (5.5,2.5) {0};
    
      \node at (0.5,1.5) {-1};
      \node at (1.5,1.5) {1};
      \node at (2.5,1.5) {0};
      \node at (3.5,1.5) {3};
      \node at (4.5,1.5) {4};
      \node at (5.5,1.5) {2};

      \node at (0.5,0.5) {-1};
      \node at (1.5,0.5) {1};
      \node at (2.5,0.5) {0};
      \node at (3.5,0.5) {3};
      \node at (4.5,0.5) {4};
      \node at (5.5,0.5) {2};

    \draw[red,very thick] (4,2) -- (4,4);
    \draw[red,very thick] (3,1) -- (3,3);
    \draw[red,very thick] (2,0) -- (2,2);
    \draw[blue,thick] (3.5,1) circle [x radius=0.35, y radius=0.8];
    \draw[blue,thick] (4.5,1) circle [x radius=0.35, y radius=0.8];
    \draw[blue,thick] (5.5,1) circle [x radius=0.35, y radius=0.8];
    \draw[blue,thick] (3.5,2) circle [x radius=0.35, y radius=0.8];
    \draw[blue,thick] (4.5,2) circle [x radius=0.35, y radius=0.8];
    \draw[blue,thick] (4.5,3) circle [x radius=0.35, y radius=0.8];
    \draw[green,thick] (2.5,1) circle [x radius=0.35, y radius=0.8];
\end{tikzpicture}
\]
Observe that 
for all $(u_1, \dots, u_n) \in C(U, W, \alpha)$,
we still have $[i+1, n] \subset \fix_{(\alpha_i, n]}(u_{i}, u_{i+1})$ for $i \in [n-1]$.
These fixed points are circled 
in blue while other fixed points are circled in green.
Thus, we have
\[
\fS^{\tra}_{a,n,\gamma}(\x;\y)
=\frac{\fC_{U,W,\alpha}^{\,n}(x_{1},\dots,x_{n-1};\y)}{\prod_{1\le i<j\le n}(x_i-y_j)}
=(x_3-y_{-1})+(x_2-y_{2})+(x_1-y_{0}).
\]

Finally, upon specializing $\x\mapsto\y$, each summand either vanishes or is a distinct product of type~3 factors, exactly as required in Problem~\ref{Pb: TBPD 2}.

\subsection*{Main arguments}
By regularity, there are exactly \((n-a+1)\) pipes in \(D\in \TBPD_{a,n}(\gamma)\).
We label these pipes by \(a,a+1,\dots,n\) as follows:
\begin{itemize}
\item If a pipe enters from column \(c\) and another from column \(c'\) with \(c<c'\), then the former receives the smaller label.
\item The pipe entering from row \(r\) receives a larger label than the pipe entering from column \(r\), but a smaller label than the pipe entering from column \(r+1\).
\end{itemize}

This labeling is valid: along the top of each row, if two pipes have not yet crossed, then the left pipe has the smaller label. Unless stated otherwise, we use this labeling throughout. In addition, by Remark~\ref{R: label change}, we may replace \(\gamma\) by the boundary condition satisfied by \(D\) with this labeling; this does not change \(\TBPD_{a,n}(\gamma)\)
or $\fS^{\tra}_{a,n,\gamma}(\x; \y)$.

\begin{exa}
\label{Ex: Label TBPD}
Consider the following \(D\in \TBPD_{a,n}(\gamma)\) with \(a=-4\) and \(n=7\).
We place each pipe’s label near its entry point.
\[
\begin{tikzpicture}[x=2em,y=2em,thick,rounded corners,color = blue]
    \draw[step=1,gray,ultra thin] (0,0) grid (12,1);
    \draw[step=1,gray,ultra thin] (0,1) grid (11,2);
    \draw[step=1,gray,ultra thin] (0,2) grid (10,3);
    \draw[step=1,gray,ultra thin] (0,3) grid (9,4);
    \draw[step=1,gray,ultra thin] (0,4) grid (8,5);
    \draw[step=1,gray,ultra thin] (0,5) grid (7,6);
    \draw[step=1,gray,ultra thin] (0,6) grid (6,7);
    \draw[color=blue, thick] (0.5,0)--(0.5, 6.5)--(1.5, 6.5)--(1.5, 7);
    \draw[blue,thick] (1.5,0) -- (1.5,4.5) -- (2.5,4.5) -- (2.5,5.5) -- (6.5,5.5) -- (6.5,6);
    \draw[blue,thick] (2.5,0) -- (2.5,3.5) -- (3.5,3.5) -- (3.5,7);
    \draw[blue,thick] (3.5,0) -- (3.5,1.5) -- (5.5,1.5) -- (5.5,3.5) -- (9,3.5);
    \draw[blue,thick] (4.5,0) -- (4.5,6.5) -- (5.5,6.5) -- (5.5,7);
    \draw[blue,thick] (5.5,0) -- (5.5,0.5) -- (12,0.5);
    \draw[blue,thick] (6.5,0) -- (6.5,4.5) -- (7.5,4.5) -- (7.5,5);
    \draw[blue,thick] (7.5,0) -- (7.5,1.5) -- (11,1.5);
    \draw[blue,thick] (8.5,0) -- (8.5,4);
    \draw[blue,thick] (9.5,0) -- (9.5,3);
    \draw[blue,thick] (10.5,0) -- (10.5,2);
    \draw[blue,thick] (11.5,0) -- (11.5,1);
    \node[color=black] at (1.5,7.2) {$-4$};
    \node[color=black] at (3.5,7.2) {$-3$};
    \node[color=black] at (5.5,7.2) {$-2$};
    \node[color=black] at (6.5,6.2) {$-1$};
    \node[color=black] at (7.5,5.2) {$0$};
    \node[color=black] at (8.5,4.2) {$1$};
    \node[color=black] at (9.2,3.5) {$2$};
    \node[color=black] at (9.5,3.2) {$3$};
    \node[color=black] at (10.5,2.2) {$4$};
    \node[color=black] at (11.2,1.5) {$5$};
    \node[color=black] at (11.5,1.2) {$6$};
    \node[color=black] at (12.2,0.5) {$7$};
\end{tikzpicture}
\]
\end{exa}

In the remainder of the paper we fix such a \(\gamma\).
By assumption there is a pipe entering from column \(c\) for each \(c=1,\dots,n\).
Let \(p_c\) be the label of the pipe entering from column \(c\) (i.e., \(p_c=\gamma_N(c)\)).
Note that$ p_{r+1}\in\{p_r+1,\;p_r+2\}$,
depending on whether there is a pipe entering from row \(r\).

\begin{exa}
Continuing Example~\ref{Ex: Label TBPD}, we have
\[
p_1=-2,\quad
p_2=-1,\quad
p_3=0,\quad
p_4=1,\quad
p_5=3,\quad
p_6=4,\quad
p_7=6.\qedhere
\]
\end{exa}

\begin{defn}
\label{D: P_r}
For \(r\in[0,n]\), let \(P_r\) be the set of permutations \(w\in S_{[a,n]}\) such that \(w(p_j)=j\) for all \(j\in(r,n]\).

There is a bijection \(\iota_r:P_r\to S_{[a,r]}\): the one-line notation of \(\iota_r(u)\in S_{[a,r]}\) is obtained from that of \(u\in S_{[a,n]}\) by deleting the entries \(r+1,\dots,n\).
\end{defn}

We next define a sequence of permutations \(u_1,\dots,u_n\) for \(D\in \TBPD_{a,n}(\gamma)\), analogous to Definition~\ref{D: BPD chain}.

\begin{defn}\label{D: u from TBPD}
Fix \(D\in \TBPD_{a,n}(\gamma)\) and \(r\in[n]\).
Let \(\varphi_r:[a,r]\to [a,p_r]\sqcup\{\varnothing\}\) be the cross section of \(D\) above row \(r\).
Define \(u_r\) by
\begin{equation}
\label{EQ: define u_r}
\text{\(u_r\in P_r\) such that  \(\iota_r(u_r)\) agrees with \(\varphi_r^{-1}\) on \([a,p_r]\) and is decreasing on \((p_r,r]\).}
\end{equation}
Finally, set \(\chain(D)=(u_n,\dots,u_1)\).
\end{defn}

We notice that \(u_1\) and \(u_n\) can be described without mentioning \(D\).

\begin{defn}
\label{D: U W alpha}
Define \(W\in P_1\) to be the permutation such that \(\iota_1(W)\) agrees with \(\gamma_N^{-1}\) on \([a,p_1]\) and is decreasing on \((p_1,1]\).
Let \(U\in S_{[a,n]}\) be the permutation agreeing with \(\gamma_S^{-1}\) on \([a,n]\).
For \(i\in[0, n-1]\) set \(\alpha_i:=p_{i+1}-1\) and write \(\alpha=(\alpha_1,\dots,\alpha_{n-1})\).
\end{defn}

Our main result in this section is as follows.
\begin{pro}
\label{P: TBPD bijection}
The map \(\chain\) is a bijection
\[
\chain:\ \TBPD_{a,n}(\gamma)\longrightarrow C(U,W,\rev(\alpha)).
\]
Moreover, if \(\chain(D)=(u_n,\dots,u_1)\), then they satisfy~\eqref{EQ: TBPD chain wt}.
\end{pro}

\begin{exa}
\label{Ex: TBPD chain}
Consider \(D\in \TBPD_{a,n}(\gamma)\) in Example~\ref{Ex: Label TBPD}.
We compute \(\chain_{-4,7}(D)=(u_7,\dots,u_1)\in C(U,W,\rev(\alpha))\) with \(\alpha=(-2,-1,0,2,3,5)\).
\[
\begin{tikzpicture}[scale=0.7]
  \draw (0,0) rectangle (12,7);
  \foreach \x in {1,...,11} \draw (\x,0) -- (\x,7);
  \foreach \y in {1,...,6}  \draw (0,\y) -- (12,\y);

  \node at (0.5,6.5) {-3}; \node at (1.5,6.5) {-1}; \node at (2.5,6.5) {\textcolor{blue}{1}};
  \node at (3.5,6.5) {\textcolor{blue}{2}};  \node at (4.5,6.5) {\textcolor{blue}{3}};  \node at (5.5,6.5) {\textcolor{blue}{4}};
  \node at (6.5,6.5) {0};  \node at (7.5,6.5) {\textcolor{blue}{5}};  \node at (8.5,6.5) {\textcolor{blue}{6}};
  \node at (9.5,6.5) {-2}; \node at (10.5,6.5) {\textcolor{blue}{7}}; \node at (11.5,6.5) {-4};

  \node at (0.5,5.5) {-4}; \node at (1.5,5.5) {-1}; \node at (2.5,5.5) {0};
  \node at (3.5,5.5) {\textcolor{blue}{2}};  \node at (4.5,5.5) {\textcolor{blue}{3}};  \node at (5.5,5.5) {\textcolor{blue}{4}};
  \node at (6.5,5.5) {1};  \node at (7.5,5.5) {\textcolor{blue}{5}};  \node at (8.5,5.5) {\textcolor{blue}{6}};
  \node at (9.5,5.5) {-2}; \node at (10.5,5.5) {\textcolor{blue}{7}}; \node at (11.5,5.5) {-3};

  \node at (0.5,4.5) {-4}; \node at (1.5,4.5) {-1}; \node at (2.5,4.5) {0};
  \node at (3.5,4.5) {-2}; \node at (4.5,4.5) {\textcolor{blue}{3}};  \node at (5.5,4.5) {\textcolor{blue}{4}};
  \node at (6.5,4.5) {2};  \node at (7.5,4.5) {\textcolor{blue}{5}};  \node at (8.5,4.5) {\textcolor{blue}{6}};
  \node at (9.5,4.5) {1};  \node at (10.5,4.5) {\textcolor{blue}{7}}; \node at (11.5,4.5) {-3};

  \node at (0.5,3.5) {-4}; \node at (1.5,3.5) {-1}; \node at (2.5,3.5) {0};
  \node at (3.5,3.5) {-3}; \node at (4.5,3.5) {2};  \node at (5.5,3.5) {\textcolor{blue}{4}};
  \node at (6.5,3.5) {3};  \node at (7.5,3.5) {\textcolor{blue}{5}};  \node at (8.5,3.5) {\textcolor{blue}{6}};
  \node at (9.5,3.5) {1};  \node at (10.5,3.5) {\textcolor{blue}{7}}; \node at (11.5,3.5) {-2};

  \node at (0.5,2.5) {-4}; \node at (1.5,2.5) {-2}; \node at (2.5,2.5) {0};
  \node at (3.5,2.5) {-3}; \node at (4.5,2.5) {2};  \node at (5.5,2.5) {4};
  \node at (6.5,2.5) {1};  \node at (7.5,2.5) {\textcolor{blue}{5}};  \node at (8.5,2.5) {\textcolor{blue}{6}};
  \node at (9.5,2.5) {3};  \node at (10.5,2.5) {\textcolor{blue}{7}}; \node at (11.5,2.5) {-1};

  \node at (0.5,1.5) {-4}; \node at (1.5,1.5) {-2}; \node at (2.5,1.5) {0};
  \node at (3.5,1.5) {-3}; \node at (4.5,1.5) {2};  \node at (5.5,1.5) {4};
  \node at (6.5,1.5) {1};  \node at (7.5,1.5) {5};  \node at (8.5,1.5) {\textcolor{blue}{6}};
  \node at (9.5,1.5) {3};  \node at (10.5,1.5) {\textcolor{blue}{7}}; \node at (11.5,1.5) {-1};

  \node at (0.5,0.5) {-4}; \node at (1.5,0.5) {-2}; \node at (2.5,0.5) {0};
  \node at (3.5,0.5) {-3}; \node at (4.5,0.5) {2};  \node at (5.5,0.5) {4};
  \node at (6.5,0.5) {-1}; \node at (7.5,0.5) {5}; \node at (8.5,0.5) {6};
  \node at (9.5,0.5) {3};  \node at (10.5,0.5) {\textcolor{blue}{7}}; \node at (11.5,0.5) {1};

  \draw[red,very thick] (3,5) -- (3,7);
  \draw[red,very thick] (4,4) -- (4,6);
  \draw[red,very thick] (5,3) -- (5,5);
  \draw[red,very thick] (7,2) -- (7,4);
  \draw[red,very thick] (8,1) -- (8,3);
  \draw[red,very thick] (10,0) -- (10,2);
  \draw[green,thick] (9.5,2) circle [x radius=0.35, y radius=0.8];
  \draw[green,thick] (11.5,2) circle [x radius=0.35, y radius=0.8];
  \draw[green,thick] (9.5,4) circle [x radius=0.35, y radius=0.8];
  \draw[green,thick] (11.5,5) circle [x radius=0.35, y radius=0.8];
  \draw[green,thick] (9.5,6) circle [x radius=0.35, y radius=0.8];
\end{tikzpicture}
\]
We record an observation, proved in general later in Lemma~\ref{L: Chain fix points}: for \(r\le j\), the value \(u_r(p_j)\) is forced to be \(j\).
We color these entries blue.
Since \(p_j>\alpha_i\) for all \(i\in [j-1]\), we have \(j\in \fix_{(\alpha_i,n]}(u_{i+1},u_i)\).
These entries contribute \(\prod_{1\le i<j\le n}(x_i-y_j)\) to the right-hand side of~\eqref{EQ: TBPD chain wt}.
The remaining contributions are circled above, yielding
\[
(x_5-y_3)(x_5-y_{-1})(x_3-y_1)(x_2-y_{-3})(x_1-y_{-2})=\wt(D). \qedhere
\]
\end{exa}

The proof is deferred to the next section.
We now translate Proposition~\ref{P: TBPD bijection} into an identity of generating functions.

\begin{cor}
\label{C: TBPD chain}
We have
\begin{equation}
\label{EQ: TBPD chain}
\fS^{\tra}_{a,n,\gamma}(\x;\y)
=
\sum_{D\in \TBPD_{a,n}(\gamma)} \wt(D)
=
\frac{\fC^{n}_{U,W,\rev(\alpha)}(x_{n-1},\dots,x_1;\y)}{\displaystyle\prod_{1\le i<j\le n}(x_i-y_j)}
=
\frac{\fC^{n}_{U,W,\alpha}(x_{1},\dots,x_{n-1};\y)}{\displaystyle\prod_{1\le i<j\le n}(x_i-y_j)}.
\end{equation}
\end{cor}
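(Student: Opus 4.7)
The proof I would give is essentially a three-line chain of equalities, obtained by stitching together results that are already established (or stated) in the excerpt, so the work is almost entirely bookkeeping once Proposition~\ref{P: TBPD bijection} is in hand.

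First, the leftmost equality $\fS^{\tra}_{a,n,\gamma}(\x;\y) = \sum_{D\in\TBPD_{a,n}(\gamma)}\wt(D)$ is just the definition of $\fS^{\tra}_{a,n,\gamma}$ from Lemma~\ref{L: regular boundary condition}, so nothing needs to be proved there.

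Second, for the middle equality I would invoke Proposition~\ref{P: TBPD bijection}: the map $\chain$ is a bijection $\TBPD_{a,n}(\gamma)\to C(U,W,\rev(\alpha))$ satisfying the weight identity~\eqref{EQ: TBPD chain wt}. Summing~\eqref{EQ: TBPD chain wt} over $D\in\TBPD_{a,n}(\gamma)$ and transporting the right-hand side through the bijection gives
\[
\Bigl(\sum_{D\in\TBPD_{a,n}(\gamma)}\wt(D)\Bigr)\prod_{1\le i<j\le n}(x_i-y_j)
= \sum_{(u_n,\dots,u_1)\in C(U,W,\rev(\alpha))} \wt^n_{\rev(\alpha)}(u_n,\dots,u_1)(x_{n-1},\dots,x_1;\y),
\]
and the right-hand side is $\fC^n_{U,W,\rev(\alpha)}(x_{n-1},\dots,x_1;\y)$ by the definition~\eqref{EQ: define fC}. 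Dividing by $\prod_{1\le i<j\le n}(x_i-y_j)$ yields the middle equality. (Division is justified since, as indicated in the example preceding the corollary, the circled-in-blue contributions $j\in\fix_{(\alpha_i,n]}(u_{i+1},u_i)$ for $r\le j$ always produce the full product $\prod_{1\le i<j\le n}(x_i-y_j)$ as a factor of $\wt^n_{\rev(\alpha)}$, so the quotient is a polynomial.)

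Third, the rightmost equality is an immediate instance of the symmetry~\eqref{EQ: double rev 2} from Proposition~\ref{P: double symmetry} applied with $\sigma=[n-1,\dots,2,1]$, which swaps $\alpha$ with $\rev(\alpha)$ and simultaneously reverses $(x_1,\dots,x_{n-1})$.

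The only nontrivial step here is the middle one, and its substance is entirely contained in Proposition~\ref{P: TBPD bijection} (whose proof is deferred). So in writing the corollary itself, the main ``obstacle'' is simply confirming that the weight identity~\eqref{EQ: TBPD chain wt} really does guarantee divisibility by $\prod_{1\le i<j\le n}(x_i-y_j)$ at the level of each individual $D$ (not just after summing), which is why I would first point to the structural observation that $u_r(p_j)=j$ for $r\le j$ forces $j\in\fix_{(\alpha_i,n]}(u_{i+1},u_i)$ for all $1\le i<j\le n$.
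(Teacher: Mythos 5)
Your proof is correct and takes essentially the same route as the paper: the first equality is the definition from Lemma~\ref{L: regular boundary condition}, the second follows by summing the weight identity~\eqref{EQ: TBPD chain wt} over the bijection of Proposition~\ref{P: TBPD bijection}, and the third is the symmetry of Proposition~\ref{P: double symmetry}. Your side remark about divisibility is sound but not needed for the corollary as stated (it is an identity of rational functions); the paper defers that polynomial-divisibility observation to Lemma~\ref{L: in fixed for increasing} immediately afterward.
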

\begin{proof}
The first equality is the definition.
The second follows from Proposition~\ref{P: TBPD bijection}.
The last follows from Proposition~\ref{P: double symmetry}.
\end{proof}

We now study \(C(U,W,\alpha)\).

\begin{exa}
\label{Ex: TBPD reversed chains}
We present two elements \((u_1,\dots,u_n)\) and \((v_1,\dots,v_n)\) in \(C(U,W,\alpha)\) in our running example (Examples~\ref{Ex: Label TBPD} and~\ref{Ex: TBPD chain}).
Here is \((u_1,\dots,u_n)\):
\[
\begin{tikzpicture}[scale=0.7]
  \draw (0,0) rectangle (12,7);
  \foreach \x in {1,...,11} \draw (\x,0) -- (\x,7);
  \foreach \y in {1,...,6}  \draw (0,\y) -- (12,\y);

  \node at (0.5,6.5) {-3}; \node at (1.5,6.5) {-1}; \node at (2.5,6.5) {1};
  \node at (3.5,6.5) {2};  \node at (4.5,6.5) {3};  \node at (5.5,6.5) {4};
  \node at (6.5,6.5) {0};  \node at (7.5,6.5) {5};  \node at (8.5,6.5) {6};
  \node at (9.5,6.5) {-2}; \node at (10.5,6.5) {\textcolor{blue}{7}}; \node at (11.5,6.5) {-4};

  \node at (0.5,5.5) {-3}; \node at (1.5,5.5) {-1}; \node at (2.5,5.5) {1};
  \node at (3.5,5.5) {2};  \node at (4.5,5.5) {3};  \node at (5.5,5.5) {4};
  \node at (6.5,5.5) {0};  \node at (7.5,5.5) {5};  \node at (8.5,5.5) {\textcolor{blue}{6}};
  \node at (9.5,5.5) {-2}; \node at (10.5,5.5) {\textcolor{blue}{7}}; \node at (11.5,5.5) {-4};

  \node at (0.5,4.5) {-3}; \node at (1.5,4.5) {-1}; \node at (2.5,4.5) {1};
  \node at (3.5,4.5) {2};  \node at (4.5,4.5) {3};  \node at (5.5,4.5) {4};
  \node at (6.5,4.5) {-2}; \node at (7.5,4.5) {\textcolor{blue}{5}}; \node at (8.5,4.5) {\textcolor{blue}{6}};
  \node at (9.5,4.5) {0};  \node at (10.5,4.5) {\textcolor{blue}{7}}; \node at (11.5,4.5) {-4};

  \node at (0.5,3.5) {-3}; \node at (1.5,3.5) {-1}; \node at (2.5,3.5) {1};
  \node at (3.5,3.5) {2};  \node at (4.5,3.5) {0};  \node at (5.5,3.5) {\textcolor{blue}{4}};
  \node at (6.5,3.5) {-4}; \node at (7.5,3.5) {\textcolor{blue}{5}}; \node at (8.5,3.5) {\textcolor{blue}{6}};
  \node at (9.5,3.5) {3};  \node at (10.5,3.5) {\textcolor{blue}{7}}; \node at (11.5,3.5) {-2};

  \node at (0.5,2.5) {-3}; \node at (1.5,2.5) {-1}; \node at (2.5,2.5) {1};
  \node at (3.5,2.5) {2};  \node at (4.5,2.5) {-4}; \node at (5.5,2.5) {\textcolor{blue}{4}};
  \node at (6.5,2.5) {-2}; \node at (7.5,2.5) {\textcolor{blue}{5}}; \node at (8.5,2.5) {\textcolor{blue}{6}};
  \node at (9.5,2.5) {\textcolor{blue}{3}};  \node at (10.5,2.5) {\textcolor{blue}{7}}; \node at (11.5,2.5) {0};

  \node at (0.5,1.5) {-3}; \node at (1.5,1.5) {-2}; \node at (2.5,1.5) {1};
  \node at (3.5,1.5) {-4}; \node at (4.5,1.5) {\textcolor{blue}{2}}; \node at (5.5,1.5) {\textcolor{blue}{4}};
  \node at (6.5,1.5) {-1}; \node at (7.5,1.5) {\textcolor{blue}{5}}; \node at (8.5,1.5) {\textcolor{blue}{6}};
  \node at (9.5,1.5) {\textcolor{blue}{3}}; \node at (10.5,1.5) {\textcolor{blue}{7}}; \node at (11.5,1.5) {0};

  \node at (0.5,0.5) {-4}; \node at (1.5,0.5) {-2}; \node at (2.5,0.5) {0};
  \node at (3.5,0.5) {-3}; \node at (4.5,0.5) {\textcolor{blue}{2}}; \node at (5.5,0.5) {\textcolor{blue}{4}};
  \node at (6.5,0.5) {-1}; \node at (7.5,0.5) {\textcolor{blue}{5}}; \node at (8.5,0.5) {\textcolor{blue}{6}};
  \node at (9.5,0.5) {\textcolor{blue}{3}}; \node at (10.5,0.5) {\textcolor{blue}{7}}; \node at (11.5,0.5) {\textcolor{blue}{1}};

  \draw[red,very thick] (3,0) -- (3,2);   
  \draw[red,very thick] (4,1) -- (4,3);   
  \draw[red,very thick] (5,2) -- (5,4);   
  \draw[red,very thick] (7,3) -- (7,5);
  \draw[red,very thick] (8,4) -- (8,6);
  \draw[red,very thick] (10,5) -- (10,7); 

  \draw[green,thick] (6.5,1) circle [x radius=0.35, y radius=0.8];
  \draw[green,thick] (11.5,2) circle [x radius=0.35, y radius=0.8];
  \draw[green,thick] (9.5,3) circle [x radius=0.35, y radius=0.8];
  \draw[green,thick] (11.5,5) circle [x radius=0.35, y radius=0.8];
  \draw[green,thick] (11.5,6) circle [x radius=0.35, y radius=0.8];
\end{tikzpicture}
\]

We color the value \(j\) in blue across all \(u_i\) with \(i\in [j]\).
Notice that \(u_i^{-1}(j)\) is the same for all \(i\in [j]\), and this position is larger than \(\alpha_1,\dots,\alpha_{j-1}\).
Thus \(j\in \fix_{(\alpha_i,n]}(u_i,u_{i+1})\) for all \(i\in [j-1]\).
They contribute \(\prod_{1\le i<j\le n}(x_i-y_j)\) to \(\wt_{\alpha}^n(u_1,\dots,u_n)(x_1,\dots,x_n;\y)\).
The remaining contributors are circled in green, so
\[
\wt_\alpha^n(u_1,\dots,u_n)(x_1,\dots,x_{n-1};\y)
=
\Bigl(\prod_{1\le i<j\le n}(x_i-y_j)\Bigr)
(x_1-y_{-1})(x_2-y_0)(x_3-y_3)(x_5-y_{-4})(x_6-y_{-4}).
\]

Here is \((v_1,\dots,v_n)\):
\[
\begin{tikzpicture}[scale=0.7]
  \draw (0,0) rectangle (12,7);
  \foreach \x in {1,...,11} \draw (\x,0) -- (\x,7);
  \foreach \y in {1,...,6}  \draw (0,\y) -- (12,\y);

  \node at (0.5,6.5) {-3}; \node at (1.5,6.5) {-1}; \node at (2.5,6.5) {\textcolor{orange}{1}};
  \node at (3.5,6.5) {\textcolor{orange}{2}};  \node at (4.5,6.5) {\textcolor{orange}{3}};  \node at (5.5,6.5) {\textcolor{orange}{4}};
  \node at (6.5,6.5) {0};  \node at (7.5,6.5) {\textcolor{orange}{5}};  \node at (8.5,6.5) {\textcolor{orange}{6}};
  \node at (9.5,6.5) {-2}; \node at (10.5,6.5) {\textcolor{blue}{7}}; \node at (11.5,6.5) {-4};

  \node at (0.5,5.5) {-3}; \node at (1.5,5.5) {-1}; \node at (2.5,5.5) {\textcolor{orange}{1}};
  \node at (3.5,5.5) {\textcolor{orange}{2}};  \node at (4.5,5.5) {\textcolor{orange}{3}};  \node at (5.5,5.5) {\textcolor{orange}{4}};
  \node at (6.5,5.5) {0};  \node at (7.5,5.5) {\textcolor{orange}{5}};  \node at (8.5,5.5) {\textcolor{blue}{6}};
  \node at (9.5,5.5) {-4}; \node at (10.5,5.5) {\textcolor{blue}{7}}; \node at (11.5,5.5) {-2};

  \node at (0.5,4.5) {-4}; \node at (1.5,4.5) {-1}; \node at (2.5,4.5) {\textcolor{orange}{1}};
  \node at (3.5,4.5) {\textcolor{orange}{2}};  \node at (4.5,4.5) {\textcolor{orange}{3}};  \node at (5.5,4.5) {\textcolor{orange}{4}};
  \node at (6.5,4.5) {0};  \node at (7.5,4.5) {\textcolor{blue}{5}};  \node at (8.5,4.5) {\textcolor{blue}{6}};
  \node at (9.5,4.5) {-3}; \node at (10.5,4.5) {\textcolor{blue}{7}}; \node at (11.5,4.5) {-2};

  \node at (0.5,3.5) {-4}; \node at (1.5,3.5) {-1}; \node at (2.5,3.5) {\textcolor{orange}{1}};
  \node at (3.5,3.5) {\textcolor{orange}{2}};  \node at (4.5,3.5) {\textcolor{orange}{3}};  \node at (5.5,3.5) {\textcolor{blue}{4}};
  \node at (6.5,3.5) {-3}; \node at (7.5,3.5) {\textcolor{blue}{5}}; \node at (8.5,3.5) {\textcolor{blue}{6}};
  \node at (9.5,3.5) {-2}; \node at (10.5,3.5) {\textcolor{blue}{7}}; \node at (11.5,3.5) {0};

  \node at (0.5,2.5) {-4}; \node at (1.5,2.5) {-1}; \node at (2.5,2.5) {\textcolor{orange}{1}};
  \node at (3.5,2.5) {\textcolor{orange}{2}};  \node at (4.5,2.5) {-3}; \node at (5.5,2.5) {\textcolor{blue}{4}};
  \node at (6.5,2.5) {-2}; \node at (7.5,2.5) {\textcolor{blue}{5}}; \node at (8.5,2.5) {\textcolor{blue}{6}};
  \node at (9.5,2.5) {\textcolor{blue}{3}};  \node at (10.5,2.5) {\textcolor{blue}{7}}; \node at (11.5,2.5) {0};

  \node at (0.5,1.5) {-4}; \node at (1.5,1.5) {-1}; \node at (2.5,1.5) {\textcolor{orange}{1}};
  \node at (3.5,1.5) {-3}; \node at (4.5,1.5) {\textcolor{blue}{2}}; \node at (5.5,1.5) {\textcolor{blue}{4}};
  \node at (6.5,1.5) {-2}; \node at (7.5,1.5) {\textcolor{blue}{5}}; \node at (8.5,1.5) {\textcolor{blue}{6}};
  \node at (9.5,1.5) {\textcolor{blue}{3}}; \node at (10.5,1.5) {\textcolor{blue}{7}}; \node at (11.5,1.5) {0};

  \node at (0.5,0.5) {-4}; \node at (1.5,0.5) {-2}; \node at (2.5,0.5) {0};
  \node at (3.5,0.5) {-3}; \node at (4.5,0.5) {\textcolor{blue}{2}}; \node at (5.5,0.5) {\textcolor{blue}{4}};
  \node at (6.5,0.5) {-1}; \node at (7.5,0.5) {\textcolor{blue}{5}}; \node at (8.5,0.5) {\textcolor{blue}{6}};
  \node at (9.5,0.5) {\textcolor{blue}{3}}; \node at (10.5,0.5) {\textcolor{blue}{7}}; \node at (11.5,0.5) {\textcolor{blue}{1}};

  \draw[red,very thick] (3,0) -- (3,2);   
  \draw[red,very thick] (4,1) -- (4,3);   
  \draw[red,very thick] (5,2) -- (5,4);   
  \draw[red,very thick] (7,3) -- (7,5);
  \draw[red,very thick] (8,4) -- (8,6);
  \draw[red,very thick] (10,5) -- (10,7); 

  \draw[green,thick] (3.5,1) circle [x radius=0.35, y radius=0.8];
  \draw[green,thick] (6.5,2) circle [x radius=0.35, y radius=0.8];
  \draw[green,thick] (11.5,2) circle [x radius=0.35, y radius=0.8];
  \draw[green,thick] (11.5,3) circle [x radius=0.35, y radius=0.8];
  \draw[green,thick] (11.5,5) circle [x radius=0.35, y radius=0.8];
\end{tikzpicture}
\]
Again, \(j\in \fix_{(\alpha_i,n]}(u_i,u_{i+1})\) for all \(i\in [j-1]\).
Thus
\[
\wt_\alpha^n(v_1,\dots,v_n)(x_1,\dots,x_{n-1};\y)
=
\Bigl(\prod_{1\le i<j\le n}(x_i-y_j)\Bigr)
(x_1-y_{-3})(x_2-y_{-2})(x_2-y_0)(x_3-y_{0})(x_5-y_{-2}).
\]
We also note that \(v_i(p_j)=j\) for all \(1\le j<i\le n\); these entries are colored orange. This property fails for \((u_1,\dots,u_n)\) since \(u_4(p_3)=0\neq 3\).
\end{exa}

We summarize the preceding observation
regarding the blue numbers.
The proof is delayed to the next section.

\begin{lem}
\label{L: in fixed for increasing}
For \((u_1,\dots,u_n)\in C(U,W,\alpha)\) and each \(i\in[n-1]\), we have
\(
[i+1,n]\subseteq \fix_{(\alpha_i,n]}(u_i,u_{i+1}).
\)
\end{lem}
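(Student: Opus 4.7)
The plan is a counting argument combined with Lemma~\ref{L: LIRD}. Fix $i \in [n-1]$ and set $T(u) := \{\, t > \alpha_i : u(t) \geq i+1 \,\}$. Since there are only $n-i$ values in $[i+1, n]$, one automatically has $|T(u)| \leq n-i$; the heart of the proof is to show the matching lower bound $|T(u_k)| \geq n-i$ for every $k \in [i, n]$, so that in each of $u_i$ and $u_{i+1}$ all values $\geq i+1$ already sit at positions $>\alpha_i$.

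I will establish this by tracking $|T|$ along the chain $u_i \to u_{i+1} \to \cdots \to u_n = W$. At a cover $v \lessdot_{\alpha_k} v\tau_{a,b}$ of an $\alpha_k$-chain with $k \geq i$, one has $a \leq \alpha_k$, $b > \alpha_k \geq \alpha_i$, and $v(a) < v(b)$. If $a \leq \alpha_i$, then only $b$ lies in $(\alpha_i, n]$, and a short case check shows $|T|$ either stays the same or drops by $1$; if instead $a \in (\alpha_i, \alpha_k]$, then both $a$ and $b$ lie in $(\alpha_i, n]$ and the swap merely permutes their values, so $|T|$ is unchanged. Hence $|T|$ is non-increasing along the whole chain. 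But $W(p_j) = j$ for $j \geq 2$, and each $p_j$ with $j \geq i+1$ satisfies $p_j \geq p_{i+1} > \alpha_i$, so $\{p_{i+1}, \dots, p_n\} \subseteq T(u_n)$, giving $|T(u_n)| \geq n-i$. Combined with the upper bound, $|T(u_k)| = n-i$ for every $k \in [i, n]$. The delicacy here---and the main obstacle of the proof---is the middle-band case $a \in (\alpha_i, \alpha_k]$, which cannot be excluded in general but is seen to preserve $|T|$ precisely because both swapped positions already lie in $(\alpha_i, n]$.

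With this in hand the conclusion is short. Lemma~\ref{L: LIRD} applied to $u_i \leq_{\alpha_i} u_{i+1}$ gives $u_i(t) \geq u_{i+1}(t)$ for all $t > \alpha_i$, which immediately forces $T(u_{i+1}) \subseteq T(u_i)$; equality of cardinalities then gives $T(u_i) = T(u_{i+1}) =: T$. Both $u_i|_T$ and $u_{i+1}|_T$ are bijections from $T$ onto $[i+1, n]$, so their sums over $T$ agree, while the pointwise inequality $u_i(t) \geq u_{i+1}(t)$ on $T$ makes each difference $u_i(t) - u_{i+1}(t)$ non-negative; hence every difference vanishes, i.e.\ $u_i(t) = u_{i+1}(t)$ for all $t \in T$. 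For each $j \in [i+1, n]$ the common position $t := u_i^{-1}(j) = u_{i+1}^{-1}(j)$ lies in $T \subseteq (\alpha_i, n]$, so $j \in \fix_{(\alpha_i, n]}(u_i, u_{i+1})$, as claimed.
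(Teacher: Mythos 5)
Your proof is correct but follows a genuinely different route from the paper's. The paper's proof goes through Lemma~\ref{L: 3 statements}, which pins down exact position data: part~(2) shows $u_j^{-1}(j)\ge p_j$, and part~(3) shows the position $u_k^{-1}(j)$ is constant over $k\in[j]$; Lemma~\ref{L: in fixed for increasing} is then an immediate corollary. You instead run a global counting argument. You track $|T(u_k)|$ where $T(u):=\{t>\alpha_i:u(t)\ge i+1\}$, show it is trivially at most $n-i$, show it is non-increasing along the chain by decomposing each arrow $u_k\xrightarrow{\alpha_k}u_{k+1}$ into Bruhat covers and case-checking where the two swapped positions fall relative to $\alpha_i$, and use $W\in P_0$ (Corollary~\ref{C: W in P0}) to pin $|T(u_n)|=n-i$, hence $|T(u_i)|=|T(u_{i+1})|=n-i$. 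Combining with $T(u_{i+1})\subseteq T(u_i)$ (from Lemma~\ref{L: LIRD}) gives $T(u_i)=T(u_{i+1})$, and a tidy sum-comparison on $T$ forces $u_i$ and $u_{i+1}$ to agree there. Both arguments are sound; the paper's is slightly more work up front but its auxiliary Lemma~\ref{L: 3 statements} is reused to prove Lemma~\ref{L: fixed numbers in the final}, whereas your argument is more self-contained and the cover-level bookkeeping is the only subtle point---in particular your observation that a swap with both endpoints in $(\alpha_i,n]$ leaves $|T|$ unchanged, so that $T(u_{k+1})\subseteq T(u_k)$ can fail for $k>i$ while $|T|$ still does not increase. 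One small slip: Corollary~\ref{C: W in P0} gives $W(p_j)=j$ for all $j\in[1,n]$, not just $j\ge 2$; this does not affect the argument since you only invoke it for $j\ge i+1\ge 2$.
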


Consequently,
\(\prod_{1\le i<j\le n}(x_i-y_j)\) divides \(\wt_\alpha^n(u_1,\dots,u_n)(x_1,\dots,x_{n-1};\y)\) for every \((u_1,\dots,u_n)\in C(U,W,\alpha)\).
Hence we may rewrite~\eqref{EQ: TBPD chain} as
\begin{equation}
\label{EQ: tilde wt}
\fS^{\tra}_{a,n,\gamma}(\x;\y)
=
\sum_{(u_1,\dots,u_n)\in C(U,W,\alpha)}
\widetilde{\wt_\alpha^n}(u_1,\dots,u_n)(x_1,\dots,x_{n-1};\y),    
\end{equation}
where
\begin{equation}
\label{EQ: normalized wt}
\widetilde{\wt_\alpha^n}(x_1,\dots,x_{n-1};\y)
:=
\frac{\wt_\alpha^n(x_1,\dots,x_{n-1};\y)}{\displaystyle\prod_{1\le i<j\le n}(x_i-y_j)}
=
\prod_{i\in[n-1]}\ \prod_{j\in \fix_{(\alpha_i,n]}(u_i,u_{i+1})\cap [a,i]} (x_i-y_j).
\end{equation}

\begin{exa}
\label{Ex: tilde wt}
Continuing Example~\ref{Ex: TBPD reversed chains}, we obtain
\[
\begin{aligned}
\widetilde{\wt_\alpha^n}(u_1,\dots,u_n)(x_1,\dots,x_{n-1};\y)
&=(x_1-y_{-1})(x_2-y_0)\textcolor{red}{(x_3-y_3)}(x_5-y_{-4})(x_6-y_{-4}),\\
\widetilde{\wt_\alpha^n}(v_1,\dots,v_n)(x_1,\dots,x_{n-1};\y)
&=(x_1-y_{-3})(x_2-y_{-2})(x_2-y_0)(x_3-y_{0})(x_5-y_{-2}).\qedhere
\end{aligned}
\]
\end{exa}

Next, consider the specialization \(\x\mapsto \y\).
For \((u_1,\dots,u_n)\in C(U,W,\alpha)\), its contribution \eqref{EQ: normalized wt} vanishes unless
\begin{equation}
\label{EQ: Non-vanishing condition}
i\notin \fix_{(\alpha_i,n]}(u_i,u_{i+1})\quad\text{for all }i\in[n-1].
\end{equation}
Let \(\widetilde{C}(U,W,\alpha)\subseteq C(U,W,\alpha)\) be the subset satisfying \eqref{EQ: Non-vanishing condition}. Then
\[
\fS^{\tra}_{a,n,\gamma}(\x;\y)
=
\sum_{(u_1,\dots,u_n)\in \widetilde{C}(U,W,\alpha)}
\widetilde{\wt_\alpha^n}(u_1,\dots,u_n)(x_1,\dots,x_{n-1};\y).
\]
In Examples~\ref{Ex: TBPD reversed chains} and~\ref{Ex: tilde wt},  since \(3\in \fix_{(\alpha_3,n]}(u_3,u_4)\), we have \(\widetilde{\wt_\alpha^n}(u_1,\dots,u_n)(y_1,\dots,y_{n-1};\y)=0\). Hence \((u_1,\dots,u_n)\notin \widetilde{C}(U,W,\alpha)\), whereas \((v_1,\dots,v_n)\in \widetilde{C}(U,W,\alpha)\).

Finally, we establish 
the observation in Example~\ref{Ex: TBPD reversed chains} regarding
the orange numbers:
\begin{lem}
\label{L: fixed numbers in the final}
For \((u_1,\dots,u_n)\in \widetilde{C}(U,W,\alpha)\), we have \(u_i(p_j)=j\) for all \(i>j\).
In particular, 
$$\fix_{(\alpha_i,n]}(u_i,u_{i+1})\cap [a,i]\subseteq [a,0].$$
Thus, 
$\widetilde{\wt_\alpha^n}(u_1,\dots,u_n)(y_1,\dots,y_{n-1};\y)$
is a distinct product
of $(y_i - y_j)$
where $i \in [n-1]$
and $j \in [a,0]$.
\end{lem}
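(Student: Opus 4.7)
The plan is to prove the claim by downward induction on $i \in [n]$. The base case $i = n$ uses $u_n = W$, together with the observation that $W(p_j) = j$ for all $j \in [1,n]$: for $j \in [2,n]$ this is precisely $W \in P_1$, while $W(p_1) = 1$ follows because $\iota_1(W)$ agrees with $\gamma_N^{-1}$ on $[a,p_1]$ and $\gamma_N(1) = p_1$.

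For the inductive step at $i < n$, I assume the lemma for $u_{i+1}, \ldots, u_n$, so that values $\{1, \ldots, i\}$ occupy positions $\{p_1, \ldots, p_i\} \subseteq [a, \alpha_i]$ in $u_{i+1}$. The crucial intermediate claim is that every smaller value $w_r(a_r)$ in the increasing chain $u_i \xrightarrow{\alpha_i} u_{i+1}$ satisfies $w_r(a_r) \le 0$. To prove this, I invoke Remark~\ref{R: distinct right points} to assume the right indices $b_r$ are distinct, so $u_{i+1}(b_r) = w_r(a_r)$; by Lemma~\ref{L: in fixed for increasing} the values $[i+1,n]$ are fixed at positions $> \alpha_i$ in $u_{i+1}$, and by the IH the values $[1,i]$ sit at positions $\le \alpha_i$, so the only remaining values available at positions $> \alpha_i$ in $u_{i+1}$ lie in $\{a, \ldots, 0\}$.

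Granted this, I argue that no $p_j$ with $j < i$ can appear as a left index in any swap of the chain. Suppose for contradiction $p_{j_0}$ is touched for some $j_0 < i$; the last swap at $p_{j_0}$ must bring the larger value $j_0 = u_{i+1}(p_{j_0})$ into position $p_{j_0}$, forcing $u_i(b_{r'}) = j_0$ for some $b_{r'} > \alpha_i$. Thus value $j_0$ sits at a right position in $u_i$. I now trace $j_0$ backward: by Lemma~\ref{L: in fixed for increasing} applied to steps $1, \ldots, j_0 - 1$ we have $u_{i'}^{-1}(j_0) = q_{j_0} \ge p_{j_0}$ for $i' \le j_0$, and the non-vanishing condition at step $j_0$ (together with Lemma~\ref{L: not fixed point}) forces $j_0$ onto a left position by step $j_0 + 1$. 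Since $j_0 \ge 1$, value $j_0$ can never serve as a smaller value at a later step (smaller values are $\le 0$), so once $j_0$ sits on the left it stays on the left, contradicting $u_i^{-1}(j_0) > \alpha_i$.

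The main obstacle is ensuring the ``smaller values $\le 0$'' property at the intermediate steps $j_0 \le i'' < i$ used in the backward trace, since the downward IH only directly provides the property from step $i$ onward. The resolution is a minimal-counterexample argument: pick the largest $i$ for which the claim fails, observe that the Key Claim is available at that step, and run the backward trace of $j_0$ only through steps where the Key Claim is already guaranteed. Finally, the ``in particular'' clause is immediate from the main claim: if $v \in \fix_{(\alpha_i, n]}(u_i, u_{i+1}) \cap [1,i]$, then $u_{i+1}^{-1}(v) > \alpha_i$, but the main claim gives $u_{i+1}^{-1}(v) = p_v \le \alpha_i$, contradiction; hence $\fix_{(\alpha_i, n]}(u_i, u_{i+1}) \cap [a,i] \subseteq [a,0]$, and the claimed product form of $\widetilde{\wt_\alpha^n}(u_1, \ldots, u_n)(y_1, \ldots, y_{n-1}; \y)$ as distinct type-3 factors is read off from~\eqref{EQ: normalized wt}.
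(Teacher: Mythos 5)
Your proof has two genuine gaps in the inductive step, and the ``minimal-counterexample'' device you invoke does not repair them.

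First, the assertion that the non-vanishing condition at step $j_0$ ``forces $j_0$ onto a left position by step $j_0+1$'' (i.e.\ $u_{j_0+1}^{-1}(j_0)\le\alpha_{j_0}$) is not supported by Lemma~\ref{L: not fixed point}. When $q_{j_0}>\alpha_{j_0}$, that lemma only supplies some $t\le\alpha_{j_0}$ with $u_{j_0+1}(t)\ge j_0$; this value could be strictly larger than $j_0$, and the lemma says nothing about where $j_0$ itself ends up. (In an increasing chain a value can move left as a larger entry and then back right as a smaller entry within the same step.) And when $q_{j_0}\le\alpha_{j_0}$ the non-vanishing condition is vacuous and yields no information about $u_{j_0+1}$. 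So the starting point of your backward trace is not established.

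Second, even granting the first step, the ``once on the left it stays'' part requires that $j_0$ never serve as a smaller entry at any step $i''$ with $j_0+1\le i''\le i-1$. Your Key Claim (smaller values $\le 0$) at step $i''$ presupposes the conclusion already for $u_{i''+1}$; under the ``largest $i$ that fails'' reduction this makes the Key Claim available only at steps $i''\ge i$. The trace from $j_0$ to $i$ necessarily passes through the steps $j_0,\dots,i-1$, none of which is covered, so there is no way to ``run the backward trace only through steps where the Key Claim is already guaranteed.''

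The paper avoids both difficulties by not tracking positions step by step. Fixing $j$, it splits on whether $u_j^{-1}(j)$ exceeds $\alpha_j$. In the first case, Lemma~\ref{L: not fixed point} supplies some $t\le\alpha_j$ with $u_{j+1}(t)\ge j$; the monotonicity Lemma~\ref{L: increasing u1 to un} then gives $u_n(t)\ge j$, and since $u_n=W\in P_0$ this forces $t=p_j$, whence $u_{j+1}(p_j)=\cdots=u_n(p_j)=j$ in one stroke. The second case is similar, again comparing against $u_n$ via monotonicity after ruling out $u_j^{-1}(j)=p_j+1$. This argument is direct for each $j$, uses no induction on $i$, and is the tool your approach is missing.
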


Collecting everything, we conclude:

\begin{pro}
\label{P: TBPD Final}
We have
\[
\fS^{\tra}_{a,n,\gamma}(\y;\y)
=
\sum_{(u_1,\dots,u_n)\in \widetilde{C}(U,W,\alpha)}
\widetilde{\wt_\alpha^n}(u_1,\dots,u_n)(y_1,\dots,y_{n-1};\y),
\]
where each summand on the right-hand side is a distinct product of type~3 terms.
This resolves Problem~\ref{Pb: TBPD 2}.
\end{pro}
\begin{proof}
This follows from \eqref{EQ: tilde wt} and Lemma~\ref{L: fixed numbers in the final}.
\end{proof}

\subsection*{Proofs}
This section presents the proofs deferred from the previous section:
Proposition~\ref{P: TBPD bijection}, Lemma~\ref{L: in fixed for increasing}, and
Lemma~\ref{L: fixed numbers in the final}.

To prove Proposition~\ref{P: TBPD bijection}, we begin by analyzing the last row of
\(D\in\TBPD_{a,n}(\gamma)\).

\begin{rem}\label{R: Last row}
All \(D \in \TBPD_{a,n}(\gamma)\) agree on row \(n\).
We describe this row by considering two cases.
\begin{itemize}
\item If \(\gamma_E(n)=\varnothing\), there is no pipe entering from row \(n\).
Then the pipe entering from column \(n\) has label \(n\) (i.e.\ \(p_n=n\)).
Moreover, in row \(n\) of \(D\), there are \(n-a+1\) pipes entering from the top and
\(n-a+1\) pipes exiting from the bottom, so this row contains only \(\vtile\).

\item Otherwise, pipe \(n-1\) enters from column \(n\) and pipe \(n\) enters from row \(n\).
In row \(n\), pipe \(n\) travels to the unique column \(c\) with \(\gamma_S(c)=n\) and exits there.
Thus, the tile \((n,c)\) is an \(\rtile\), and to its right (resp.\ left) all tiles are \(\ptile\) (resp.\ \(\vtile\)). \qedhere
\end{itemize}
\end{rem}

We first identify the endpoints of \(\chain(D)\).

\begin{lem}\label{L: U W}
Let \((u_n,\dots,u_1)=\chain(D)\).
Then \(u_n=U\) and \(u_1=W\).
\end{lem}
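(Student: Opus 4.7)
The plan is to verify the two equalities $u_1 = W$ and $u_n = U$ separately, in each case by explicitly identifying the relevant cross section $\varphi_r$ (for $r \in \{1, n\}$) with the corresponding portion of the boundary condition $\gamma$, then reading off the defining properties of $u_r$, $W$, and $U$.

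For $u_1 = W$, the key observation is that the cross section $\varphi_1$ above the topmost row is determined entirely by the north boundary. By Remark~\ref{R: Top row bottom row cs}, $\varphi_1$ coincides with $\gamma_N$ on its domain, so $\varphi_1^{-1}$ matches $\gamma_N^{-1}$ on $[a, p_1]$. Since both $u_1$ (from Definition~\ref{D: u from TBPD}) and $W$ (from Definition~\ref{D: U W alpha}) are characterized as the unique element of $P_1$ agreeing with this inverse on $[a, p_1]$ and decreasing on $(p_1, 1]$, they must coincide.

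For $u_n = U$, I would use Remark~\ref{R: Last row} to split into its two cases. If $\gamma_E(n) = \varnothing$, then $p_n = n$ and row $n$ of $D$ consists entirely of $\vtile$'s; every pipe passes straight down through the row, so $\varphi_n = \gamma_S$ on $[a, n]$, and the defining condition for $u_n$ then reads $u_n = \gamma_S^{-1} = U$ on $[a, p_n] = [a, n]$. If instead $\gamma_E(n) \neq \varnothing$, then $p_n = n-1$ and row $n$ consists of $\vtile$'s to the left of a single $\rtile$ at column $c := \gamma_S^{-1}(n)$, followed by $\ptile$'s to the right. Tracing pipes through these three tile types yields $\varphi_n(c') = \gamma_S(c')$ for every $c' \neq c$ and $\varphi_n(c) = \varnothing$. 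Hence $u_n$ agrees with $\gamma_S^{-1}$ on $[a, n-1] = [a, p_n]$, and because $u_n$ is a permutation of $[a, n]$ the single remaining value is forced to be $u_n(n) = c = \gamma_S^{-1}(n) = U(n)$.

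No real obstacle is anticipated: the whole argument is a direct unpacking of definitions combined with the explicit description of the first and last rows of $D$. The only care needed is in the second subcase above to note that the ``decreasing on $(p_n, n] = \{n\}$'' condition in Definition~\ref{D: u from TBPD} is vacuous on a singleton, so it does not conflict with the value $u_n(n) = c$ forced by the permutation constraint.
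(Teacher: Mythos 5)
Your proposal is correct and follows essentially the same route as the paper: invoke Remark~\ref{R: Top row bottom row cs} to match $\varphi_1$ with $\gamma_N$ (so $u_1 = W$ by comparing Definitions~\ref{D: u from TBPD} and~\ref{D: U W alpha}), then use Remark~\ref{R: Last row} to split into the two cases for row $n$ and identify $\varphi_n$ with $\gamma_S$ (with the $\varnothing$ at column $c = \gamma_S^{-1}(n)$ in the second case), concluding $u_n = U$. The only thing the paper makes slightly more explicit is that $P_n = S_{[a,n]}$ and $\iota_n = \mathrm{id}$, so that Definition~\ref{D: u from TBPD} specializes to conditions on $u_n$ itself rather than $\iota_n(u_n)$; this is implicit in your argument and does not affect its correctness.
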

\begin{proof}
Let \(\varphi_r\) denote the cross section of \(D\) above row \(r\) for \(r\in[n+1]\).
By Remark~\ref{R: Top row bottom row cs}, \(\varphi_1\) agrees with \(\gamma_N\) on \([a,1]\),
so \(\varphi_1^{-1}\) agrees with \(\gamma_N^{-1}\) on \([a,p_1]\).
Comparing Definition~\ref{D: u from TBPD} and Definition~\ref{D: U W alpha}, we obtain \(u_1=W\).

Similarly, Remark~\ref{R: Top row bottom row cs} gives \(\varphi_{n+1}=\gamma_S\) on \([a,n]\).
Since \(P_n=S_{[a,n]}\) and \(\iota_n=\mathrm{id}\),
we translate~\eqref{EQ: define u_r} in 
Definition~\ref{D: u from TBPD} as
\begin{equation}\label{EQ: describing u_n}
\text{\(u_n \in S_{[a,n]}\) agrees with \(\varphi_n^{-1}\) on \([a,p_n]\) and is decreasing on \((p_n,n]\).}
\end{equation}
Now consider the two cases from Remark~\ref{R: Last row}.
If \(\gamma_E(n)=\varnothing\), then \(p_n=n\) and row \(n\) is all \(\vtile\), so \(\varphi_n=\varphi_{n+1}\).
Plugging \(p_n=n\) into~\eqref{EQ: describing u_n} shows $u_n$ agrees with$ \varphi_n^{-1}=\gamma_S^{-1}$ on $[a, n]$,
so $u_n = U$.
Otherwise, \(\gamma_E(n)=n\) and \(p_n=n-1\).
If \(\gamma_S(c)=n\), then \(\varphi_n(c)=\varnothing\) and \(\varphi_n(i)=\varphi_{n+1}(i)=\gamma_S(i)\) for \(i\in [a,n] \setminus \{c\}\).
Equation~\eqref{EQ: describing u_n} 
says $u_n$ agrees with $\varphi_n^{-1}$,
or equivalently $\gamma_{S}^{-1}$,
on $[a, n-1]$,
forcing \(u_n(n)=c=\gamma_S^{-1}(n)\). 
Thus, $u_n$ agrees with $ \gamma_S^{-1}$ on $[a, n]$, so $u_n = U$.
\end{proof}

Next, we show the observation made in Example~\ref{Ex: TBPD chain}.
\begin{lem}\label{L: In P_r-1}
Let \(\chain_{a,n}(D)=(u_n,\dots,u_1)\).
Then \(u_r(p_j)=j\) for  \(1 \le r\le j\le n\). Thus, \(u_r\in P_{r-1}\).
\end{lem}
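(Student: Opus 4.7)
The plan is to split the claim $u_r(p_j) = j$ into two cases. For $j \in (r, n]$ there is nothing to prove: Definition~\ref{D: u from TBPD} produces $u_r \in P_r$, and by Definition~\ref{D: P_r} every element of $P_r$ satisfies $w(p_j) = j$ for all $j \in (r, n]$. The content of the lemma therefore lies in the boundary case $j = r$, which requires a short inspection of the cross section $\varphi_r$.

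For $j = r$, I would argue directly from the geometry of $D$. The pipe labeled $p_r$ enters the trapezoid from the top of column $r$, so it appears at the top edge of the cell $(r, r)$. Since $(r-1, r)$ is not in $\A^{\tra}_{a, n}$, this top edge is the only candidate location for a pipe at column $r$ in the cross section above row $r$. Hence $\varphi_r(r) = p_r$, so $\varphi_r^{-1}(p_r) = r$. By Definition~\ref{D: u from TBPD}, $\iota_r(u_r)$ agrees with $\varphi_r^{-1}$ on $[a, p_r]$, which yields $\iota_r(u_r)(p_r) = r$.

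The one subtle point is translating this back to a statement about $u_r$ itself. Since $u_r \in P_r$, the entries $r+1, \dots, n$ being deleted by $\iota_r$ occupy positions $p_{r+1}, \dots, p_n$ in the one-line notation of $u_r$, all of which are strictly greater than $p_r$. Consequently $\iota_r$ leaves the value at position $p_r$ unchanged, giving $\iota_r(u_r)(p_r) = u_r(p_r)$, and completing the proof that $u_r(p_r) = r$. The second assertion $u_r \in P_{r-1}$ is then immediate from Definition~\ref{D: P_r}: $P_{r-1}$ consists of those $w \in S_{[a, n]}$ with $w(p_j) = j$ for all $j \in (r-1, n] = [r, n]$, which is exactly what we have just verified.

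I do not anticipate any real obstacles; the argument is essentially bookkeeping. The only step that requires care is the observation that the monotonicity $p_r < p_{r+1} < \cdots < p_n$ ensures $\iota_r$ preserves the value at position $p_r$, and this in turn is guaranteed by the labeling convention that makes the column-entry labels $p_c$ strictly increasing in $c$.
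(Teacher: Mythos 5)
Your proof is correct and follows essentially the same route as the paper's: for $j \in (r,n]$ the claim is immediate from $u_r \in P_r$, and for $j = r$ you read off $\varphi_r(r) = p_r$ from the fact that pipe $p_r$ enters at the top edge of cell $(r,r)$, then invoke the defining property of $u_r$. The one place you are more careful than the paper is the explicit justification that $\iota_r(u_r)(p_r) = u_r(p_r)$, via the monotonicity $p_r < p_{r+1} < \cdots < p_n$; the paper silently elides this step (it is isolated as Lemma~\ref{L: iota left agree} only afterward), so your version is slightly more self-contained but identical in substance.
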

\begin{proof}
By \eqref{EQ: define u_r}, \(u_r\in P_r\), so \(u_r(p_j)=j\) for \(j\in(r,n]\).
Let $\varphi_r$ be the cross section of $D$ above row $r$.
We have \(\varphi_r(r)=p_r\)
since pipe \(p_r\) connects to the top of \((r,r)\)).
Thus, \(u_r(p_r)=\varphi_r^{-1}(p_r)=r\).
\end{proof}

\begin{cor}\label{C: W in P0}
We have \(W\in P_0\) and \(\iota_0(W)\) is decreasing on \([p_1,0]\).
\end{cor}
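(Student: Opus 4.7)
The plan is to verify the two claims in order: first show that $W(p_1)=1$, which combined with $W\in P_1$ gives $W\in P_0$; then transfer the decreasing property of $\iota_1(W)$ on $(p_1,1]$ to $\iota_0(W)$ on $[p_1,0]$ by comparing the two enumerations of positions underlying $\iota_1$ and $\iota_0$.

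For the first claim, I would begin by noting that the $p_c$ are strictly increasing in $c$, since $p_{c+1}\in\{p_c+1,p_c+2\}$. In particular $p_2,\dots,p_n$ are all strictly greater than $p_1$, and since $W\in P_1$ puts the values $2,\dots,n$ at the positions $p_2,\dots,p_n$, every position in $[a,p_1]$ must carry a $W$-value at most $1$. Thus the first $p_1-a+1$ positions at which $W\le 1$ are exactly $a,a+1,\dots,p_1$ themselves, and unpacking Definition~\ref{D: P_r} gives $\iota_1(W)(k)=W(k)$ for every $k\in[a,p_1]$. Specialising $k=p_1$ and invoking the defining property from Definition~\ref{D: U W alpha} that $\iota_1(W)$ agrees with $\gamma_N^{-1}$ on $[a,p_1]$, we conclude $W(p_1)=\gamma_N^{-1}(p_1)=1$, and therefore $W\in P_0$.

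For the second claim, the key observation is that the positions in $[a,n]$ where $W\le 0$ are exactly the positions where $W\le 1$ with $p_1$ removed, since by step one $W(p_1)=1$ is the unique occurrence of the value $1$. Writing the positions with $W\le 1$ as $q_a<\cdots<q_1$ and the positions with $W\le 0$ as $r_a<\cdots<r_0$, step one also shows $q_{p_1}=p_1$, so deleting $p_1$ from the sequence $(q_k)$ yields $r_k=q_k$ for $k<p_1$ and $r_k=q_{k+1}$ for $k\ge p_1$. Hence $\iota_0(W)(k)=W(r_k)=W(q_{k+1})=\iota_1(W)(k+1)$ for every $k\in[p_1,0]$, and the decreasing property of $\iota_1(W)$ on $(p_1,1]$ built into Definition~\ref{D: U W alpha} immediately translates into the desired decreasing property of $\iota_0(W)$ on $[p_1,0]$.

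I do not anticipate any serious obstacle: the entire argument is bookkeeping about how deleting the values $2,\dots,n$ versus $1,2,\dots,n$ from the one-line notation of $W$ interacts with the membership constraint $W\in P_r$. The only mildly delicate step is the shift-by-one identity $\iota_0(W)(k)=\iota_1(W)(k+1)$, which relies on correctly identifying $p_1$ as the single position enumerated by $\iota_1$ but not by $\iota_0$.
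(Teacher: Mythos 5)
Your proposal is correct, and it takes a genuinely different route from the paper's. The paper's proof is a one-liner: ``By Lemma~\ref{L: U W}, the $u_1$ in Lemma~\ref{L: In P_r-1} is $W$.'' That argument instantiates some $D\in\TBPD_{a,n}(\gamma)$, identifies $u_1=W$ via the cross section (Lemma~\ref{L: U W}), pulls $W\in P_0$ out of the BPD-based argument that $\varphi_1(1)=p_1$ (Lemma~\ref{L: In P_r-1}), and leaves the transfer of the decreasing property from $\iota_1(W)$ on $(p_1,1]$ to $\iota_0(W)$ on $[p_1,0]$ implicit, essentially deferring to the shift relation later formalized as Lemma~\ref{L: r-1 vs r}. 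You instead argue entirely from Definitions~\ref{D: P_r} and~\ref{D: U W alpha}, with no reference to any bumpless pipedream: $W\in P_1$ forces the values at positions $[a,p_1]$ to all be at most $1$, so $\iota_1(W)$ agrees with $W$ there (this is the $r=1$ case of Lemma~\ref{L: iota left agree}), giving $W(p_1)=\iota_1(W)(p_1)=\gamma_N^{-1}(p_1)=1$ and hence $W\in P_0$; then you derive the index shift $\iota_0(W)(k)=\iota_1(W)(k+1)$ on $[p_1,0]$ by directly tracking which positions survive the deletion (this is the $r=1$ case of Lemma~\ref{L: r-1 vs r}). Your route is more self-contained and, as a bonus, does not presuppose that $\TBPD_{a,n}(\gamma)$ is nonempty, whereas the paper's citation of Lemma~\ref{L: In P_r-1} implicitly does. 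The trade-off is that you re-derive two small auxiliary facts that the paper abstracts into Lemmas~\ref{L: iota left agree} and~\ref{L: r-1 vs r} for reuse elsewhere; there is no loss of correctness, only a small duplication of bookkeeping.
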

\begin{proof}
By Lemma~\ref{L: U W}, 
the \(u_1\) in 
Lemma~\ref{L: In P_r-1} is $W$.
\end{proof}

We shall see Lemma~\ref{L: In P_r-1} leads to an alternative description of $u_r$, which is~\eqref{EQ: define u_r 2}. We start with some simple facts regarding $P_r$ and $\iota_r$.

\begin{lem}\label{L: iota left agree}
When \(u\in P_r\),  \(\iota_r(u)\) and \(u\) agree on  \([a,p_{r+1})\).
\end{lem}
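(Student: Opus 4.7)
The plan is to unwind the definitions of $P_r$ (Definition~\ref{D: P_r}) and $\iota_r$. For $u \in P_r$, the defining equations $u(p_j)=j$ for $j\in(r,n]$ say that the values $r+1,r+2,\dots,n$ appear in $u$'s one-line notation at positions $p_{r+1}<p_{r+2}<\cdots<p_n$, respectively. By construction, $\iota_r(u)$ is obtained from $u$ by deleting these $n-r$ entries from the one-line notation and then reading off the surviving values in their original order as the one-line notation of a permutation in $S_{[a,r]}$.

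With this picture in place, the key step is to observe that $p_{r+1}$ is the smallest of the deleted positions. Therefore no deletion occurs among the positions $a, a+1, \dots, p_{r+1}-1$, so in the shortened sequence the first $p_{r+1}-a$ entries are $u(a), u(a+1), \dots, u(p_{r+1}-1)$ in that order. Reading this off as the one-line notation of $\iota_r(u)$ immediately yields $\iota_r(u)(i) = u(i)$ for every $i\in[a, p_{r+1})$, which is the claim.

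No real obstacle is expected: the entire argument is one-step bookkeeping. The only point to be careful about is to remember that it is the \emph{values} (not the positions) equal to $r+1,\dots,n$ that are removed from the one-line notation of $u$, and that the $P_r$ condition locates these values precisely at positions $p_{r+1}, \dots, p_n$, whose minimum is $p_{r+1}$.
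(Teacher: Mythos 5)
Your proof is correct and follows essentially the same one-step bookkeeping argument as the paper: the $P_r$ condition places the deleted values $r+1,\dots,n$ at positions $p_{r+1},\dots,p_n$, all of which lie to the right of $[a,p_{r+1})$, so the initial segment of the one-line notation is untouched by the deletion defining $\iota_r$.
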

\begin{proof}
The values \(r+1,\dots,n\) occur among \(u(p_{r+1}),\dots,u(n)\).
Thus the first \(p_{r+1}-a\) entries of the one-line notation of \(\iota_r(u)\) match those of \(u\).
\end{proof}

\begin{lem}\label{L: r-1 vs r}
If \(u\in P_{r-1}\subset P_r\), then
\[
\iota_{r-1}(u)(i)=
\begin{cases}
\iota_{r}(u)(i), & i\in[a,p_r),\\[2pt]
\iota_{r}(u)(i+1), & i\in[p_r,r-1].
\end{cases}
\]
\end{lem}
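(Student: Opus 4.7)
The plan is to realize $\iota_{r-1}(u)$ as the effect of one further deletion applied to $\iota_r(u)$. By definition, $\iota_r(u)$ is obtained from the one-line notation of $u$ by removing the values $r+1, \ldots, n$, while $\iota_{r-1}(u)$ additionally removes the value $r$. Since $u \in P_{r-1}$, that additional value sits at position $p_r$ of $u$. Thus the two subtasks are: first, locate $r$ inside $\iota_r(u)$; second, track the index shift when a single position is deleted from an element of $S_{[a,r]}$.

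For the first subtask, the plan is to show $\iota_r(u)(p_r) = r$. The positions deleted when forming $\iota_r(u)$ are $p_{r+1} < p_{r+2} < \cdots < p_n$, which are all strictly greater than $p_r$. Consequently the order-preserving relabeling $[a,n] \setminus \{p_{r+1},\ldots,p_n\} \to [a,r]$ that defines $\iota_r$ is the identity on $[a, p_r]$, so $\iota_r(u)(p_r) = u(p_r) = r$. For the second subtask, removing the single position $p_r$ from $\iota_r(u)$ leaves the indices in $[a, p_r)$ untouched and shifts each index in $(p_r, r]$ down by one; evaluating $\iota_{r-1}(u)$ at the new indices returns $\iota_r(u)(i)$ for $i \in [a, p_r)$ and $\iota_r(u)(i+1)$ for $i \in [p_r, r-1]$, matching the two cases in the statement.

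The only subtlety is that the two ranges must jointly exhaust the domain $[a,r-1]$ of $\iota_{r-1}(u)$, which reduces to checking $p_r \leq r$. This follows from $p_1 < p_2 < \cdots < p_n \leq n$, where $p_n$ equals $n$ or $n-1$ depending on whether $\gamma_E(n) = \varnothing$ (cf.\ Remark~\ref{R: Last row}), giving $p_r \leq n - (n-r) = r$. In the boundary case $p_r = r$ the second range is empty and the first already covers $[a, r-1]$, so the formula is consistent. No step appears to present a serious obstacle; the entire argument is bookkeeping on one-line notations, with the membership $u \in P_{r-1}$ used only to pin down that $r$ lives at position $p_r$.
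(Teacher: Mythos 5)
Your proof is correct and follows essentially the same route as the paper: identify $\iota_r(u)(p_r)=r$ (which you re-derive directly, whereas the paper invokes Lemma~\ref{L: iota left agree}), then observe that deleting the single value $r$ from $\iota_r(u)$ yields $\iota_{r-1}(u)$ and track the index shift. The extra verification that $p_r\leq r$ is harmless but not strictly needed, since the two cases simply partition $[a,r-1]$ by construction.
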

\begin{proof}
Because \(u\in P_{r-1}\), we have \(u(p_r)=r\).
By Lemma~\ref{L: iota left agree}, \(\iota_r(u)(p_r)=r\).
Removing this entry from the one-line notation of \(\iota_r(u) \in S_{[a,r]}\) yields that of \(\iota_{r-1}(u) \in S_{[a, r-1]}\).
\end{proof}

Combining these, we obtain an equivalent description of \(u_r\).

\begin{cor}\label{C: Alternative description}
If \(u_r\in P_{r-1}\) and \(\varphi_r\) satisfy \eqref{EQ: define u_r}, then
\begin{equation}\label{EQ: define u_r 2}
\text{\(u_r\in P_{r-1}\) with \(\iota_{r-1}(u_r)=\varphi_r^{-1}\) on \([a,p_r)\) and decreasing on \([p_r,r-1]\).}
\end{equation}
In particular, by Lemma~\ref{L: In P_r-1}, one may replace \eqref{EQ: define u_r} by \eqref{EQ: define u_r 2} in Definition~\ref{D: u from TBPD}.
\end{cor}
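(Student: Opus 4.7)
The statement is essentially a bookkeeping translation, and the heavy lifting is already done by Lemma~\ref{L: r-1 vs r}. My plan is to apply that lemma pointwise to $u_r$, splitting the index set $[a, r-1]$ into the two pieces $[a, p_r)$ and $[p_r, r-1]$ on which Lemma~\ref{L: r-1 vs r} gives different formulas.

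First I would handle the left range. For $i \in [a, p_r)$, Lemma~\ref{L: r-1 vs r} yields $\iota_{r-1}(u_r)(i) = \iota_r(u_r)(i)$, and the right-hand side equals $\varphi_r^{-1}(i)$ by the hypothesis \eqref{EQ: define u_r}. Next I would handle the right range. For $i \in [p_r, r-1]$, Lemma~\ref{L: r-1 vs r} gives $\iota_{r-1}(u_r)(i) = \iota_r(u_r)(i+1)$; as $i$ traverses $[p_r, r-1]$ the shifted index $i+1$ traverses $(p_r, r]$, on which $\iota_r(u_r)$ is decreasing by \eqref{EQ: define u_r}. Hence $\iota_{r-1}(u_r)$ is decreasing on $[p_r, r-1]$, completing \eqref{EQ: define u_r 2}.

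For the ``in particular'' clause, I need to argue that \eqref{EQ: define u_r} and \eqref{EQ: define u_r 2} really pick out the same permutation. The $(\Rightarrow)$ direction is what the corollary just proved, noting that Lemma~\ref{L: In P_r-1} gives us the hypothesis $u_r \in P_{r-1}$ for free. For $(\Leftarrow)$, observe that \eqref{EQ: define u_r 2} uniquely determines $\iota_{r-1}(u_r) \in S_{[a, r-1]}$ (the values on $[a, p_r)$ are prescribed by $\varphi_r^{-1}$, and the values on $[p_r, r-1]$ must be the decreasing rearrangement of the complementary subset), hence uniquely determines $u_r \in P_{r-1}$. Running Lemma~\ref{L: r-1 vs r} in reverse — together with the identity $\varphi_r^{-1}(p_r) = r$, which follows because pipe $p_r$ connects to the top of the cell $(r,r)$ (this was the key step in the proof of Lemma~\ref{L: In P_r-1}) — extends the agreement with $\varphi_r^{-1}$ from $[a, p_r)$ to $[a, p_r]$ and shifts the decreasing condition from $[p_r, r-1]$ to $(p_r, r]$, recovering \eqref{EQ: define u_r}.

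I do not anticipate a real obstacle: the content is entirely index bookkeeping, and all tools (Lemma~\ref{L: r-1 vs r}, Lemma~\ref{L: iota left agree}, Lemma~\ref{L: In P_r-1}) are already in place. The only thing to watch carefully is the boundary index $i = p_r$, where the two formulas of Lemma~\ref{L: r-1 vs r} meet; this is precisely where the ``$\pm 1$'' shift reflects the fact that the entry $u_r(p_r) = r$ has been deleted in passing from $\iota_r(u_r)$ to $\iota_{r-1}(u_r)$.
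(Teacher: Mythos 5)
Your proof is correct and takes essentially the same route as the paper, whose entire proof reads ``Immediate from Lemma~\ref{L: r-1 vs r}''; you have simply spelled out the pointwise index bookkeeping (splitting $[a,r-1]$ into $[a,p_r)$ and $[p_r,r-1]$ and tracking the shift $i\mapsto i+1$). The extra ``$(\Leftarrow)$'' discussion in your last paragraph is not strictly needed, since \eqref{EQ: define u_r} and \eqref{EQ: define u_r 2} each already determine $u_r$ uniquely once $\varphi_r$ is fixed, so proving one implies the other suffices to establish that they define the same permutation.
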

\begin{proof}
Immediate from Lemma~\ref{L: r-1 vs r}.
\end{proof}

Lemma~\ref{L: In P_r-1} shows many fixed points in the $\chain(D)$. We show elements of $C(U, W,\rev(\alpha))$ satisfy a similar property. We need the following lemma which says $\iota_r$ behaves nicely with increasing chains.

\begin{lem}\label{L: iota preserves inc}
Let \(u,v\in P_r\) and \(k\in[a,p_{r+1})\).
Then
\[
u \xrightarrow{k} v \quad\Longleftrightarrow\quad \iota_r(u) \xrightarrow{k} \iota_r(v),
\]
and in this case
\[
\fix_{(k,n]}(u,v)=\fix_{(k,r]}(\iota_r(u),\iota_r(v))\sqcup [r+1,n].
\]
\end{lem}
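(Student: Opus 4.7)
The plan is to handle the fixed-point identity and the chain equivalence as two separate sub-arguments, both exploiting the key consequence of the hypothesis $k<p_{r+1}$: since $p_{r+1}<p_{r+2}<\cdots<p_n$, the positions $\{p_{r+1},\dots,p_n\}$ all lie strictly above $k$. First I would set up the order-preserving bijection $\phi\colon [a,n]\setminus\{p_{r+1},\dots,p_n\}\to[a,r]$ that realizes the position-relabeling performed by $\iota_r$, so that $\iota_r(w)(\phi(t))=w(t)$ for every $w\in P_r$ and every kept position $t$. Because no $p_j$ lies in $[a,k]$, $\phi$ restricts to the identity on $[a,k]$, and in particular $t>k\iff\phi(t)>k$ for every kept~$t$.

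With $\phi$ in hand, the fixed-point identity becomes a direct set-theoretic check. The values $j\in[r+1,n]$ all lie in $\fix_{(k,n]}(u,v)$ since $u(p_j)=v(p_j)=j$ and $p_j>k$, contributing the $[r+1,n]$ summand. Any other common value $s\in\fix_{(k,n]}(u,v)$ must satisfy $s\le r$, so the witnessing position $t\in(k,n]$ is necessarily kept (no $p_j$ with $j>r$ carries a value $\le r$), and $\phi(t)\in(k,r]$; passing through $\phi$ gives a bijection with $\fix_{(k,r]}(\iota_r(u),\iota_r(v))$, completing the disjoint union.

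For the chain equivalence the forward direction is the substantive step. I would choose an increasing $k$-chain $u=u_1\lessdot_k\cdots\lessdot_k u_s=v$ with pairwise distinct right endpoints $b_1,\dots,b_{s-1}$ via Remark~\ref{R: distinct right points}, and show that no swap touches any $p_j$ with $j>r$. Left endpoints cannot, since $a_t\le k<p_j$; and if $b_t=p_j$ for some $t$, then because the $b$'s are distinct the position $p_j$ is disturbed exactly once, producing $v(p_j)=u_s(p_j)=u_t(a_t)<j$, contradicting $v(p_j)=j$. Consequently each intermediate $u_t$ lies in $P_r$ and the chain descends through $\iota_r$ to a sequence of transpositions $\tau_{\phi(a_t),\phi(b_t)}$ with $\phi(a_t)\le k<\phi(b_t)$; the increasing condition on smaller swapped values transfers verbatim since $u_t(a_t)=\iota_r(u_t)(\phi(a_t))$. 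The reverse direction is the mirror argument: any increasing $k$-chain for $\iota_r(u)\to\iota_r(v)$ lifts via $\phi^{-1}$ to a chain for $u\to v$, with the values $r+1,\dots,n$ kept pinned at positions $p_{r+1},\dots,p_n$ throughout.

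The main obstacle will be verifying that cover relations really do survive the projection $\iota_r$, that is, $\ell(\iota_r(u_{t+1}))-\ell(\iota_r(u_t))=1$ at every step. I plan to handle this by a direct inversion count: for any $w\in P_r$, the inversions of $w$ involving a deleted position $p_j$ are exactly the pairs $(p_j,t)$ with $t>p_j$ and $t\notin\{p_{r+1},\dots,p_n\}$, because the value $j>r$ dominates every value held at a kept position. This count depends only on the fixed data $\{p_{r+1},\dots,p_n\}$ and not on $w$, so $\ell(w)-\ell(\iota_r(w))$ is constant on $P_r$, and the length jumps of $+1$ transfer intact in both directions of the lift.
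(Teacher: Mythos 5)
Your proof is correct, but it takes a more explicit, self-contained route than the paper, whose entire proof is ``By repeated application of Lemma~\ref{L: Insert large number}.'' The paper views passing from $\iota_r(u) \in S_{[a,r]}$ to $u \in P_r$ as a sequence of insertions of the largest value at a prescribed position, and since $k < p_{r+1} \le p_j$ for $j>r$, every insertion position is $\ge k$, so each application of Lemma~\ref{L: Insert large number} falls into the second case (the increasing-chain index $k$ stays fixed and the newly inserted value $j$ joins the fixed-point set). Iterating gives both halves of the statement at once. Your argument instead unrolls everything from scratch: it fixes the order-preserving relabeling $\phi$, checks the fixed-point identity set-theoretically, shows via distinct right endpoints (Remark~\ref{R: distinct right points}) that an increasing $k$-chain in $S_{[a,n]}$ never disturbs the positions $p_{r+1},\dots,p_n$, and — the one point the paper's phrasing sweeps under the rug of Lemma~\ref{L: Insert large number} — verifies that cover relations survive $\iota_r$ by showing $\ell(w) - \ell(\iota_r(w)) = \sum_{j=r+1}^n (j - p_j)$ is constant on $P_r$. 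The length-constancy computation is a nice way to make the descent/lift of covers airtight; the paper's route buys brevity by offloading exactly that step onto the earlier insertion lemma (itself proved only with ``Follows from Definition~\ref{D: Inc chain}''). Both are valid; yours is the more instructive if one wants to see the mechanism.
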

\begin{proof}
By repeated application of Lemma~\ref{L: Insert large number}.
\end{proof}

\begin{lem}\label{L: Chain fix points}
If \((u_n,\dots,u_1)\in C(U,W,\rev(\alpha))\), then \(u_r\in P_{r-1}\) and
\(\iota_{r-1}(u_r)\) is decreasing on \([p_r,r-1]\).
\end{lem}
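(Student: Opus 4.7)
The plan is to induct on $r \in \{1, 2, \dots, n\}$. The base case $r=1$ is exactly Corollary~\ref{C: W in P0}. For the inductive step, I assume the claim at $r$ and prove it at $r+1$; by the definition of $C(U, W, \rev(\alpha))$, the consecutive pair satisfies $u_{r+1} \xrightarrow{k} u_r$ with $k = \alpha_r = p_{r+1} - 1$.

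The first task is to show $u_{r+1} \in P_r$, i.e.\ $u_{r+1}(p_j) = j$ for all $j \in [r+1, n]$. By Lemma~\ref{L: LIRD}, $u_{r+1} \le_k u_r$ forces $u_{r+1}(p_j) \ge u_r(p_j) = j$ for every $j \ge r+1$ (since $p_j \ge p_{r+1} > k$, using the inductive hypothesis that $u_r \in P_{r-1}$). A downward induction on $j$ from $n$ to $r+1$ then upgrades these inequalities to equalities by pigeonhole: $u_{r+1}(p_n) \ge n$ forces $u_{r+1}(p_n) = n$; given that, $u_{r+1}(p_{n-1}) \ge n-1$ but $\ne n$ forces $u_{r+1}(p_{n-1}) = n-1$; and so on.

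Having both $u_{r+1}, u_r \in P_r$ and $k \in [a, p_{r+1})$, Lemma~\ref{L: iota preserves inc} transports the edge to $\iota_r(u_{r+1}) \xrightarrow{k} \iota_r(u_r)$ inside $S_{[a,r]}$. To apply Lemma~\ref{L: decreasing preserved} and conclude that $\iota_r(u_{r+1})$ is decreasing on $(k, r] = [p_{r+1}, r]$, it suffices to show that $\iota_r(u_r)$ is decreasing on $[p_{r+1}, r]$; I will in fact establish decreasingness on the larger interval $[p_r, r]$. By Lemma~\ref{L: iota left agree}, $\iota_r(u_r)(p_r) = u_r(p_r) = r$, and by Lemma~\ref{L: r-1 vs r}, $\iota_r(u_r)(j) = \iota_{r-1}(u_r)(j-1)$ for $j \in [p_r+1, r]$. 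The inductive hypothesis that $\iota_{r-1}(u_r)$ is decreasing on $[p_r, r-1]$ then shifts to $\iota_r(u_r)$ being decreasing on $[p_r+1, r]$, with all values on this interval lying in $[a, r-1]$; combined with $\iota_r(u_r)(p_r) = r$, this yields decreasingness on the whole of $[p_r, r]$, closing the induction.

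I expect the main effort to lie in the bookkeeping around the two maps $\iota_r$ and $\iota_{r-1}$, where the shift on $[p_r, r-1]$ described by Lemma~\ref{L: r-1 vs r} must be tracked carefully to transfer the decreasingness hypothesis across levels; the Bruhat-order step is then forced by a short pigeonhole once LIRD is applied. A minor edge case worth verifying is that $p_{r+1} > a$, so that $k = p_{r+1}-1 \ge a$ indeed lies in $[a, p_{r+1})$, which follows from the strict inequalities $a \le p_1 < p_2 < \cdots < p_{r+1}$.
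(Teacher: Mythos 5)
Your proof is correct and follows essentially the same route as the paper's: induction on $r$ with base case Corollary~\ref{C: W in P0}, then Lemma~\ref{L: LIRD} plus a downward pigeonhole to get $u_{r+1}\in P_r$, then Lemmas~\ref{L: r-1 vs r}, \ref{L: iota preserves inc}, and~\ref{L: decreasing preserved} for the decreasingness. The only (immaterial) differences are that you establish decreasingness of $\iota_r(u_r)$ on the slightly larger interval $[p_r,r]$ where the paper only needs $(p_r,r]$, and that you explicitly flag the $\alpha_r\ge a$ edge case.
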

\begin{proof}
By Lemma~\ref{L: finiteness of inc chain}, each \(u_i\in S_{[a,n]}\).
We argue by induction on \(r\).
For \(r=1\) this is Corollary~\ref{C: W in P0}.
Assume the claim holds for \(r\).
For \(j\in[r+1,n]\), we have \(u_r(p_j)=j\).
Since \(p_j\ge p_{r+1}>\alpha_r\), Lemma~\ref{L: LIRD} with \(u_{r+1}\xrightarrow{\alpha_r}u_r\) gives
\(u_{r+1}(p_j)\ge u_r(p_j)=j\).
On the other hand, 
$u_{r+1} \in S_{[a,n]}$
gives \(u_{r+1}(p_j)\le n\).
Another induction on $j = n, \dots, r+1$
gives \(u_{r+1}(p_j)=j\),
so \(u_{r+1}\in P_r\).

By the inductive hypothesis \(\iota_{r-1}(u_r)\) decreases on \([p_r,r-1]\).
By Lemma~\ref{L: r-1 vs r},
\(\iota_r(u_r)\) decreases on \((p_r,r]\) which contains \([p_{r+1},r]\).
By Lemma~\ref{L: iota preserves inc}, \(\iota_r(u_{r+1}) \xrightarrow{\alpha_r} \iota_r(u_r)\).
Lemma~\ref{L: decreasing preserved} then implies \(\iota_r(u_{r+1})\) decreases on \([p_{r+1},r]\).
\end{proof}

Now we can prove Proposition~\ref{P: TBPD bijection}.

\begin{proof}[Proof of Proposition~\ref{P: TBPD bijection}]
Let \(D\in\TBPD_{a,n}(\gamma)\) and \((u_n,\dots,u_1)=\chain(D)\).
Write \(\varphi_r\) for the cross section of \(D\) above row \(r\).
By Lemma~\ref{L: U W}, \(u_n=U\) and \(u_1=W\).
To prove \((u_n,\dots,u_1)\in C(U,W,\rev(\alpha))\) and \eqref{EQ: TBPD chain wt}, it suffices to show that for each \(r\in[n-1]\),
\begin{equation}\label{EQ: TBPD proof}
u_{r+1}\xrightarrow{\alpha_r} u_r
\quad\text{and}\quad
\fix_{(\alpha_r,n]}(u_{r+1},u_r)=\{\,c\in[a,r]: D(r,c)=\btile\,\}\sqcup [r+1,n].
\end{equation}
Consider row \(r\) of \(D\) with its labeling
inherited from $D$. It is a BPD of
\(\{r\}\times[a,r]\) with boundary condition \(\theta^r\) given by
\begin{equation}\label{EQ: theta}
\theta_N^r=\varphi_r,\qquad
\theta_S^r=\varphi_{r+1},\qquad
\theta_W^r(r)=\varnothing,\qquad
\theta_E^r(r)=\gamma_E(r).
\end{equation}
Here \(\theta_E^r(r)=p_{r+1}-1=\alpha_r\) if a pipe enters from row \(r\), and \(\theta_E^r(r)=\varnothing\) otherwise.
By Definition~\ref{D: u from TBPD}, \(\iota_r(u_r)\in S_{[a,r]}\) agrees with \((\theta_N^{r})^{-1}\) on \([a,p_r]\) and is decreasing on \((p_r,r]\).
By Lemma~\ref{L: Chain fix points}, \(\iota_r(u_{r+1})\) agrees with \((\theta_S^{r})^{-1}\) on \([a,\alpha_r]\) and is decreasing on \((\alpha_r,r]\).
Lemma~\ref{L: one row BPD, right entry} and Lemma~\ref{L: one row BPD, no right entry} then yields
\begin{equation}\label{EQ: TBPD proof iota}
\iota_r(u_{r+1})\xrightarrow{\alpha_r}\iota_r(u_r)
\quad\text{and}\quad
\fix_{(\alpha_r,r]}(\iota_r(u_{r+1}),\iota_r(u_r))
=\{\,c\in[a,r]: D(r,c)=\btile\,\}.
\end{equation}
Applying Lemma~\ref{L: iota preserves inc} to~\eqref{EQ: TBPD proof iota} gives \eqref{EQ: TBPD proof}.

Thus \(\chain(\cdot)\) maps \(\TBPD_{a,n}(\gamma)\) into \(C(U,W,\rev(\alpha))\).
Fix \((u_n,\dots,u_1)\in C(U,W,\rev(\alpha))\).
For bijectivity, 
it remains to show there
is a unique $D \in \TBPD_{a,n}(\gamma)$ 
sent to $(u_n, \dots, u_1)$.
By Lemma~\ref{L: Chain fix points},  \(u_r\in P_{r-1}\) and \(\iota_{r-1}(u_r)\) decreases on \([p_r,r-1]\). Thus, \(\iota_r(u_r)\) decreases on \((p_r,r]\) by Lemma~\ref{L: r-1 vs r}.
Define cross sections \(\varphi_r:[a,r]\to [a,p_r]\sqcup\{\varnothing\}\) by
\[
\varphi_r(c)=
\begin{cases}
u_r^{-1}(c), & u_r^{-1}(c)\in[a,p_r],\\
\varnothing, & \text{otherwise}.
\end{cases}
\]
Then \eqref{EQ: define u_r} holds.
Let \(\theta^r\) be as in \eqref{EQ: theta}. 
Notice that
\[ \begin{aligned} & \chain(D) = (u_n, \dots, u_1). \\[4pt] \Leftrightarrow\;& \text{$D$ has $\varphi_r$ as the cross section above row $r$ for all $r \in [n]$} \\[4pt] \Leftrightarrow\;& \text{$D$ restricted to $\{r\} \times [a,r]$ is a BPD satisfying $\theta^r$ for $r \in [n-1]$.} \end{aligned} \]
By Lemma~\ref{L: iota preserves inc},
$u_{r+1} \xrightarrow{\alpha_r} u_r$
gives
\(\iota_r(u_{r+1})\xrightarrow{\alpha_r}\iota_r(u_r)\) .
By
Lemma~\ref{L: one row BPD, right entry} and Lemma~\ref{L: one row BPD, no right entry}, there is a unique BPD on \(\{r\}\times[a,r]\) with boundary \(\theta^r\) for each \(r\in[n-1]\).
Concatenating these rows and using Remark~\ref{R: Last row} determines a unique \(D\in\TBPD_{a,n}(\gamma)\) with \(\chain(D)=(u_n,\dots,u_1)\).
\end{proof}

\medskip
Our next goal is Lemma~\ref{L: in fixed for increasing}.
We begin with a monotonicity fact for elements of \(C(U,W,\alpha)\).

\begin{lem}\label{L: increasing u1 to un}
Let \((u_1,\dots,u_n)\in C(U,W,\alpha)\).
For any \(i\in[n-1]\) and \(j\in[a,\alpha_i]\),
\[
u_i(j)\le u_{i+1}(j)\le \cdots \le u_n(j).
\]
\end{lem}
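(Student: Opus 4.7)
The plan is to reduce the claim to a single-step inequality $u_m(j) \le u_{m+1}(j)$ and then chain these inequalities together, using only Lemma~\ref{L: LIRD} together with the observation that $\alpha_1 < \alpha_2 < \cdots < \alpha_{n-1}$.

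First I would verify the monotonicity of $\alpha$. By Definition~\ref{D: U W alpha}, $\alpha_i = p_{i+1} - 1$, and the relation $p_{r+1} \in \{p_r+1, p_r+2\}$ (recorded after Example~\ref{Ex: Label TBPD}) shows that $p_1 < p_2 < \cdots < p_n$, hence $\alpha_1 < \alpha_2 < \cdots < \alpha_{n-1}$. In particular, whenever $j \le \alpha_i$ and $m \ge i$, we have $j \le \alpha_m$.

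Next I would prove the one-step bound: for any $m \in [n-1]$ and any $j \le \alpha_m$, we have $u_m(j) \le u_{m+1}(j)$. Indeed, $u_m \xrightarrow{\alpha_m} u_{m+1}$ implies $u_m \le_{\alpha_m} u_{m+1}$ (this is built into Definition~\ref{D: Inc chain}), so Lemma~\ref{L: LIRD} applied with $k = \alpha_m$ yields exactly this inequality.

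Finally, fix $i \in [n-1]$ and $j \in [a, \alpha_i]$. For each $m \in [i, n-1]$, the monotonicity of $\alpha$ gives $j \le \alpha_i \le \alpha_m$, so the one-step bound applies and $u_m(j) \le u_{m+1}(j)$. Chaining these inequalities for $m = i, i+1, \dots, n-1$ produces
\[
u_i(j) \le u_{i+1}(j) \le \cdots \le u_n(j),
\]
as required. No substantive obstacle is anticipated; the argument is essentially a bookkeeping assembly of the monotonicity of $\alpha$ with Lemma~\ref{L: LIRD}.
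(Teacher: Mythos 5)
Your proof is correct and follows essentially the same route as the paper: both rely on the monotonicity $\alpha_i < \alpha_{i+1} < \cdots < \alpha_{n-1}$ together with iterated application of Lemma~\ref{L: LIRD}. You simply spell out the chaining step that the paper compresses into the phrase ``iterating Lemma~\ref{L: LIRD} gives the claim.''
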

\begin{proof}
We have
\[
u_i \le_{\alpha_i} u_{i+1} \le_{\alpha_{i+1}} \cdots \le_{\alpha_{n-1}} u_n,
\qquad
\alpha_i<\alpha_{i+1}<\cdots<\alpha_{n-1}.
\]
Iterating Lemma~\ref{L: LIRD} gives the claim.
\end{proof}

We now establish the property of $C(U, W, \alpha)$ that is analogous to Lemma~\ref{L: Chain fix points} and is observed in Example~\ref{Ex: TBPD reversed chains}.

\begin{lem}\label{L: 3 statements}
Let \((u_1,\dots,u_n)\in C(U,W,\alpha)\).
Then:
\begin{enumerate}
\item For \(i\in[n]\) and \(j\in[0,i]\),
\[
u_i([a,\alpha_j])\subseteq [a,j],
\]
where by Definition~\ref{D: U W alpha} we interpret \(\alpha_0:=p_1-1\).
\item For every \(i\in[n]\), \(u_i^{-1}(i)\ge p_i\).
\item For every \(i\in[n]\), one has
\[
u_1^{-1}(i)=u_2^{-1}(i)=\cdots=u_i^{-1}(i).
\]
\end{enumerate}
\end{lem}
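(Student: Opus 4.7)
The plan is to handle the three statements in order; (2) falls out of (1) by inspection, and the contradiction step in (3) invokes (1) at a specific index. For statement (1), Lemma~\ref{L: increasing u1 to un} yields $u_i(c) \leq u_n(c) = U(c)$ for $c \in [a, \alpha_i]$; since $\alpha_j \leq \alpha_i$ whenever $j \leq i$, it suffices to prove $U([a, \alpha_j]) \subseteq [a, j]$ for all $j \in [0, n]$. Assuming $\TBPD_{a,n}(\gamma)$ is nonempty (otherwise $C(U, W, \alpha)$ is also empty by the bijections in Propositions~\ref{P: TBPD bijection} and~\ref{P: double symmetry}, and the lemma is vacuous), I fix any $D \in \TBPD_{a,n}(\gamma)$ and interpret $U(\ell)$ as the exit column of the pipe labeled $\ell$ in $D$. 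The labeling convention places any pipe with label $\ell \leq \alpha_j = p_{j+1} - 1$ as either a column entry with $c \leq j$ (covering the cases $\ell < p_1$ and $\ell = p_c$ for $c \in [n]$) or a row entry with $r \leq j$ (the case $\ell = p_r + 1$ when $p_{r+1} = p_r + 2$). Since BPD pipes travel only leftward and downward, such a pipe exits from a column $\leq j$. Statement (2) then follows by applying (1) with $j = i - 1$: the value $i$ is excluded from $u_i([a, \alpha_{i-1}]) \subseteq [a, i-1]$, so $u_i^{-1}(i) > \alpha_{i-1} = p_i - 1$.

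For statement (3), I reduce to the stronger claim that for every $k \in [n-1]$ and every value $v > k$, one has $u_k^{-1}(v) = u_{k+1}^{-1}(v)$; iterating for $k = 1, \dots, i-1$ then produces the full equality $u_1^{-1}(i) = \cdots = u_i^{-1}(i)$. To prove the claim, I fix an increasing $\alpha_k$-chain $u_k = \sigma_1 \lessdot_{\alpha_k} \cdots \lessdot_{\alpha_k} \sigma_s = u_{k+1}$ with $\sigma_{t+1} = \sigma_t \tau_{a_t, b_t}$, and argue by contradiction that no swap moves a value $> k$. Supposing $t$ is the first step with $\sigma_t(b_t) > k$, the swap places this value at position $a_t \leq \alpha_k$, and a short induction shows the value at $a_t$ stays $> k$ in every subsequent $\sigma_{t'}$: if a later swap has $a_{t'} = a_t$, the cover-relation inequality $\sigma_{t'}(a_t) < \sigma_{t'}(b_{t'})$ combined with $\sigma_{t'+1}(a_t) = \sigma_{t'}(b_{t'})$ preserves the bound; otherwise the value at $a_t$ is unchanged. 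Hence $u_{k+1}(a_t) > k$ while $a_t \leq \alpha_k$, contradicting statement~(1) applied to $u_{k+1}$ at $j = k$.

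The main technical point to be careful about is the inductive tracking in (3): once a value $> k$ lodges at some position $a_t \leq \alpha_k$, every subsequent cover relation at that position only pushes the value higher, making it impossible for $u_{k+1}$ to satisfy the statement~(1) bound.
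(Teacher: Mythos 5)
Your proof of part~(1) contains a genuine error in the identification of the endpoints of the chain. By the definition of $C(U,W,\alpha)$ in the paper (sequences $(u_1,\dots,u_{m+1})$ with $u_1 = u$ and $u_{m+1} = w$), an element $(u_1,\dots,u_n) \in C(U,W,\alpha)$ has $u_1 = U$ and $u_n = W$, not $u_n = U$ as you write. Lemma~\ref{L: increasing u1 to un} therefore gives $u_i(c) \le u_n(c) = W(c)$ for $c \le \alpha_i$, and what must be shown is $W([a,\alpha_j]) \subseteq [a,j]$, not $U([a,\alpha_j]) \subseteq [a,j]$. Your pipe argument is a correct proof that $U([a,\alpha_j]) \subseteq [a,j]$ (the exit-column permutation indeed satisfies this), but this only gives a \emph{lower} bound on $u_i$ via $U = u_1 \le u_i$, which is of no use for the desired upper bound. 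The paper instead uses $u_n = W \in P_0$ (Corollary~\ref{C: W in P0}): since $W(p_k) = k$ for $k \in [1,n]$ and none of $p_{j+1},\dots,p_n$ lies in $[a,\alpha_j] = [a,p_{j+1})$, the set $W([a,\alpha_j])$ avoids $[j+1,n]$ and hence lies in $[a,j]$. Repairing your proof would mean replacing the exit-column reasoning with either this observation or an analogous BPD argument about entry columns.

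Given a corrected part~(1), your parts~(2) and~(3) are sound. Part~(2) matches the paper. Your proof of~(3) takes a slightly different route from the paper: where the paper applies Lemma~\ref{L: not fixed point} to find a position $t \le \alpha_j$ with $u_{j+1}(t) \ge i$ and contradicts~(1), you trace the chain step-by-step and show that once a value exceeding $k$ lands at a position $\le \alpha_k$ it can only be replaced by larger values (essentially re-proving a special case of Lemma~\ref{L: LIRD}); both routes land on the same contradiction with~(1) at $j = k$, $i = k+1$, so this is a valid, if more hands-on, alternative.
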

\begin{proof}
(1) If \(u_i([a,\alpha_j])\not\subset [a,j]\), then by Lemma~\ref{L: increasing u1 to un}, \(u_n([a,\alpha_j])\) contains a value \(>j\).
By Corollary~\ref{C: W in P0}, \(u_n=W\in P_0\), so $j+1, \dots, n$
are on indices $p_{j+1},
\dots, p_n$ of $u_n$.
However, the interval \([a,\alpha_j]=[a,p_{j+1})\) does not contain any of \(p_{j+1},\dots,p_n\). Contradiction.

(2) Apply (1) with \(j=i-1\) to get \(u_i([a,\alpha_{i-1}]) = u_i([a,p_i))\subseteq [a,i-1]\), hence \(i\in u_i([p_i,n])\).

(3) Suppose not. We can find \(j<i\) with \(u_j^{-1}(i)\ne u_{j+1}^{-1}(i)\).
By (1), \(u_j^{-1}(i)>\alpha_j\).
Since \(i\notin\fix_{(\alpha_j,n]}(u_j,u_{j+1})\), Lemma~\ref{L: not fixed point} yields a \(t\in[a,\alpha_j]\) with \(u_{j+1}(t)\ge i\).
However, by (1),
$u_{j+1}([a,\alpha_j]) \subseteq [a, j]$ which does not contain
any number at least $i$. Contradiction. 
\end{proof}

Finally, we can prove the two lemmas
in the previous section. 

\begin{proof}[Proof of Lemma~\ref{L: in fixed for increasing}]
Fix \(i\in[n-1]\) and \(j\in[i+1,n]\).
By Lemma~\ref{L: 3 statements}(3),
\(u_i^{-1}(j)=u_{i+1}^{-1}(j)=u_j^{-1}(j)\).
By Lemma~\ref{L: 3 statements}(2), this common value is \(\ge p_j>\alpha_{j-1}\ge \alpha_i\).
Hence \(j\in \fix_{(\alpha_i,n]}(u_i,u_{i+1})\).
\end{proof}

\begin{proof}[Proof of Lemma~\ref{L: fixed numbers in the final}]
By Lemma~\ref{L: 3 statements}(2), \(u_i^{-1}(i)\ge p_i\).
We consider two cases.

\emph{Case 1:} \(u_i^{-1}(i)>\alpha_i\).
Since \((u_1,\dots,u_n)\in\widetilde{C}(U,W,\alpha)\), we have \(i\notin \fix_{(\alpha_i,n]}(u_i,u_{i+1})\).
Lemma~\ref{L: not fixed point} implies that there exists \(t\in[a,\alpha_i]\) with \(u_{i+1}(t)\ge i\).
Then by Lemma~\ref{L: increasing u1 to un},
\[
i \le u_{i+1}(t)\le \cdots \le u_n(t).
\]
The only \(t\in[a,\alpha_i]\) with \(u_n(t)\ge i\) is \(t=p_i\), where \(u_n(p_i)=i\).
Hence \(u_{i+1}(p_i)=\cdots=u_n(p_i)=i\).

\emph{Case 2:} \(u_i^{-1}(i)\le \alpha_i=p_{i+1}-1\).
Since also \(u_i^{-1}(i)\ge p_i\) and \(p_{i+1}-p_i\in\{1,2\}\), we must have \(u_i^{-1}(i)=p_i\) or \(p_i+1\).
The latter is impossible: if \(u_i(p_i+1)=i\), then Lemma~\ref{L: increasing u1 to un} gives \(u_n(p_i+1)\ge i\), a contradiction to $u_n = W \in P_0$.
Once we have $u_i(p_i) = i$,
Lemma~\ref{L: increasing u1 to un} implies
$u_{i+1}(p_i) = \cdots = u_n(p_i) =  i$.

Finally, fix \(i\in[n-1]\).
In both \(u_i\) and \(u_{i+1}\) the values \(1,\dots,i-1\) lie at positions \(p_1,\dots,p_{i-1}\), so none of these values belongs to \(\fix_{(\alpha_i,n]}(u_i,u_{i+1})\).
\end{proof}

\bibliographystyle{alpha}
\bibliography{citation}{}

\end{document}